\crefname{hypothesis}{Hypothesis}{Hypotheses}
\Crefname{ALC@unique}{Line}{Lines}
\colorlet{texcscolor}{blue!50!black}
\colorlet{texemcolor}{red!70!black}
\colorlet{texpreamble}{red!70!black}
\colorlet{codebackground}{black!25!white!25}
\DeclareMathOperator{\Tr}{Tr}
\newcommand{\m}{}
\newcommand{\simcom}{\textcolor{blue}}
\renewcommand{\vec}{\mathrm{vec}}
\newtheorem{thm}{Theorem}[section]
\newtheorem{lem}[thm]{Lemma}
\newtheorem{cor}[thm]{Corollary}
\newtheorem{prop}[thm]{Property}
\newtheorem{rem}[thm]{Remark}
\newtheorem{prob}[thm]{Problem}
\newtheorem{conj}[thm]{Conjecture}
\lstdefinestyle{siamlatex}{%
  style=tcblatex,
  texcsstyle=*\color{texcscolor},
  texcsstyle=[2]\color{texemcolor},
  keywordstyle=[2]\color{texemcolor},
  moretexcs={cref,Cref,maketitle,mathcal,text,headers,email,url},
}
\DeclareTotalTCBox{\code}{ v O{} }
{ %fontupper=\ttfamily\color{texemcolor},
  fontupper=\ttfamily\color{black},
  nobeforeafter,
  tcbox raise base,
  colback=codebackground,colframe=white,
  top=0pt,bottom=0pt,left=0mm,right=0mm,
  leftrule=0pt,rightrule=0pt,toprule=0mm,bottomrule=0mm,
  boxsep=0.5mm,
  #2}{#1}
\patchcmd\newpage{\vfil}{}{}{}
\title{Bounds on geodesic distances on the Stiefel manifold for a family of Riemannian metrics}
\author{Simon Mataigne\thanks{ICTEAM Institute, UCLouvain, Louvain-la-Neuve, Belgium. Simon Mataigne is a Research Fellow of the Fonds de la Recherche Scientifique - FNRS. This work was supported by the Fonds de la Recherche Scientifique - FNRS under Grant no T.0001.23.\\ \email{simon.mataigne@uclouvain.be}, \email{pa.absil@uclouvain.be}.}
\and P.-A. Absil\footnotemark[1]
\and Nina Miolane\thanks{UC Santa Barbara, Electrical and Computer Engineering, Santa Barbara, CA. N. M. acknowledges funding from the NSF Career 2240158. \email{ninamiolane@ucsb.edu}}}
\begin{document}
\maketitle

%% ------------------------------------------------------------------
%% ABSTRACT
%% ------------------------------------------------------------------

\begin{abstract}
We give bounds on geodesic distances on the Stiefel manifold, derived from new geometric insights. The considered geodesic distances are induced by the one-parameter family of Riemannian metrics introduced by H\"uper et al.~(2021), which contains the well-known Euclidean and canonical metrics. First, we give the best Lipschitz constants between the distances induced by any two members of the family of metrics.  Then, we give a lower and an upper bound on the geodesic distance by the easily computable Frobenius distance. We give explicit families of pairs of matrices that depend on the parameter of the metric and the dimensions of the manifold, where the lower and the upper bound are attained. These bounds aim at improving the theoretical guarantees and performance of minimal geodesic computation algorithms by reducing the initial velocity search space. %Finally, we propose necessary conditions for the geodesics to be minimal under the canonical metric. 
In addition, these findings contribute to advancing the understanding of geodesic distances on the Stiefel manifold and their applications.
\end{abstract}

\begin{keywords}
Stiefel manifold, Riemannian logarithm, geodesic distance, bounds.
\end{keywords}

\begin{MSCcodes}
15B10, 53Z50, 53C30,	14M17
\end{MSCcodes}

\section{Introduction} The past few years have seen the emergence of many applications for statistics on manifolds. The manifold-based statistical tools often rely on the notion of weighted mean, i.e., a minimizer of a weighted sum of the squared distances to the data points~\cite{frechet1948}. Optimization algorithms that compute weighted means on a Riemannian manifold $\mathcal{M}$ require an oracle that returns a \emph{minimal geodesic} between any two given points $\m{U},\widetilde{\m{U}}\in\mathcal{M}$, i.e., a shortest path between $\m{U}$ and $\widetilde{\m{U}}$~\cite{AfsariTronVidal2013}. While there are well-known closed-form solutions for minimal geodesics on some Riemannian manifolds such as the sphere, their computation remains a challenge in general, and in particular on the Stiefel manifold.  Given integers $n \geq p > 0$, the Stiefel manifold is the set of orthonormal $p$-frames in $\mathbb{R}^n$, written $\mathrm{St}(n,p)$, namely
\begin{equation}
    \mathrm{St}(n,p):=\{ \m{U}\in \mathbb{R}^{n\times p}\  |\ \m{U}^T\m{U} = \m{I}_p \}.
\end{equation}
This manifold is involved in several applications in statistics~\cite{ChakrabortyVemuri2019, Turaga08}, optimization~\cite{EdelmanArias98} and machine learning~\cite{Choromanski,GaoVaryAblinAbsil2022,Huang_Liu_Lang_Yu_Wang_Li_2018}. An implementation of the Stiefel manifold is available in software such as \href{https://geomstats.github.io/}{\texttt{Geomstats}}~\cite{JMLR:v21:19-027}, \href{https://www.manopt.org/}{\texttt{Manopt}}~\cite{manopt}, \href{https://manoptjl.org/stable/}{\texttt{Manopt.jl}}~\cite{Bergmann:2022}. 

On the Stiefel manifold endowed with the well-known Euclidean or canonical metrics~\cite{EdelmanArias98}, geodesics %(i.e., locally length-minimizing curves parameterized with constant speed)
exist for all times~\cite{Huper2021}. It follows from the Hopf-Rinow theorem (see, e.g.,~\cite[Chap.~III,~Thm.~1.1]{sakai1996riemannian}) that at least one minimal geodesic exists between any two points. However, just computing \emph{some} geodesic, i.e., a \emph{locally} shortest path, between two given points on the Stiefel manifold is already a challenge, known as the \emph{geodesic endpoint problem}. Numerous numerical methods to address this problem have been proposed over recent years~\cite{BrynerDarshan17,mataigne2024efficient,Nguyengeodesics,RentmeestersQ,sutti2023shooting,Zimmermann17,ZimmermannRalf22}. Ensuring that the computed geodesic is minimal is even more challenging and is called the \emph{logarithm problem}. To date, no method is available that provably systematically achieves this goal and the aim of this work is to provide theoretical guarantees to these methods.%This remains an unsolved question to this day and none of the previously cited methods provably achieve this goal.
\paragraph{Contributions} After preliminaries in \cref{sec:preliminaries}, we show the bilipschitz equivalence between any two geodesic distances induced by a one-parameter family of Riemannian metrics in \cref{sec:equivalence}. The one-parameter family contains the Euclidean and canonical metrics. We derive, respectively in \cref{sec:general_lower_bound} and~\cref{sec:general_upper_bound}, a lower and an upper bound on these parameterized geodesic distances with the easily computable Frobenius distance
\begin{equation*}
    d_\mathrm{F}(U,\widetilde{U}) = \|U-\widetilde{U}\|_\mathrm{F}.%,\quad \|\m{X}\|_\mathrm{F}^2 = \sum_{i=1}^n\sum_{j=1}^p\m{X}_{i,j}^2 = \Tr(\m{X}^T\m{X}).
\end{equation*}
%The lower bound is tight for both the Euclidean and canonical metrics when $p$ is even. 
%The lower bound is attained at all Frobenius distances for in the case of the Euclidean metric (Corollary~\cref{cor:reachedforallp}). It is also tight for the canonical metric when the matrices feature an even number of columns (Theorem \cref{thm:lowerboundreached}).
Determining if the bounds are attained or not depends on the pair $(n,p)$ and the parameter of the metric. In~\cref{sec:attaining_lower_bound} and~\cref{sec:reachingupperbound}, we discuss the influence of these parameters. Whenever we claim that the bound on the geodesic distance is attained, we prove it by providing families of pairs of matrices achieving this bound. In~\cref{sec:diameter}, we obtain the diameter of the Stiefel manifold under the Euclidean metric. In~\cref{sec:generalization_to_beta}, we deduce the bilipschitz equivalence of any geodesic distance from the parameterized family with the Frobenius distance. 

These bounds are useful in the design of minimal geodesic computation algorithms. Given initial and final points $U$ and $\widetilde{U}$ on the Stiefel manifold, and given a Riemannian metric and its induced geodesic distance function $d(\cdot,\cdot)$, knowing $m$ and $M$ such that $m\leq d(U,\widetilde{U})\leq M$ allows a  minimal geodesic computation algorithm to restrict the search for the initial velocity $\m{\Delta}$ to the shell $\mathcal{S}=\left\{\m{\Delta}\ | \ m\leq \|\m{\Delta}\|\leq M \right\}$. Without further information, finding a geodesic from $U$ to $\widetilde{U}$ with initial velocity in $\mathcal{S}$ does not guarantee that it is minimal. However, if the algorithm finds a curve from $U$ to $\widetilde{U}$ with length $m$, then the curve is a minimal geodesic. %This scenario where $\underline{b}= d_\beta(U,\widetilde{U})$ may be uncommon in applications, but we make use of it in proof of~\cref{thm:lowerboundreached}. 
Moreover, if the algorithm finds a geodesic from $U$ to $\widetilde{U}$ with length greater than $M$, then the algorithm can issue the warning that the geodesic is not minimal, or pursue its quest for a (possibly) minimal geodesic.

%Finally, we give in Section~\cref{sec:necessaryconditions} necessary conditions for the geodesics under the canonical metric to be minimal. These conditions bring new knowledge on the behavior of the geodesics. In particular, we show that these minimal curves always increase the Frobenius distance from the starting point and remain confined in a well defined spherical cap of the Euclidean space $\mathbb{R}^{n\times p}$.
\paragraph{Reproducibility statement} All the codes written to produce numerical results and figures are available at the address \url{https://github.com/smataigne/StiefelBounds.jl}.
\paragraph{Notation} For $p>0$, we denote the $p\times p$ identity matrix by $\m{I}_p$. For $n>p>0$, we define \begin{equation*}
    \m{I}_{n\times p }:=\begin{bmatrix}
        \m{I}_p\\
        \m{0}_{n-p\times p}
    \end{bmatrix}\text{ and } \m{I}_{p\times n} := \m{I}_{n\times p }^T.
\end{equation*}
$\mathrm{Skew}(p)$ is the set of $p\times p$ skew-symmetric matrices ($\m{A}=-\m{A}^T)$ and the orthogonal group of $p\times p$ matrices is denoted by
\begin{equation*}
    \mathrm{O}(p) :=\{\m{Q}\in\mathbb{R}^{p\times p}\ | \ \m{Q}^T\m{Q}=\m{I}_p\}.
\end{equation*}
The special orthogonal group $\mathrm{SO}(p)$ is the subset of matrices of $ \mathrm{O}(p)$ with positive unit determinant. An orthogonal completion of $U\in\mathbb{R}^{n\times p}$ is $U_\perp\in\mathbb{R}^{n\times n-p}$ such that $\begin{bmatrix}
    U&U_\perp
\end{bmatrix}\in \mathrm{O}(n)$. Throughout this paper, $\exp$ and $\log$ always denote the matrix exponential and the principal matrix logarithm. The Riemannian exponential and logarithm are written with capitals, $\text{Exp}$ and $\text{Log}$. The ambient space~$\mathbb{R}^{n\times p}$ of $\mathrm{St}(n,p)$ is equipped with the standard Frobenius metric, i.e., for $A,B\in \mathbb{R}^{n\times p}$, we have 
\begin{equation*}
	\langle A,B\rangle_\mathrm{F} := \Tr(A^T B).
\end{equation*}
Finally, we will handle submatrices and consider the standard notation from \cite[Chap.~1.3.3]{GoluVanl96}. $A_{i_1:i_2,j_1:j_2}$ denotes the submatrix obtained by selecting rows $i_1$ to $i_2$, columns $j_1$ to $j_2$ included.  $A_{i,j_1:j_2}$ means that only the $i$th row is selected and $A_{j_1:j_2}$ that all rows are selected.

\section{Preliminaries on the Stiefel manifold}\label{sec:preliminaries}
This section is based on~\cite{AbsMahSep2008,EdelmanArias98,ZimmermannRalf22}. The dimension of $\mathrm{St}(n,p)$ is $np-\frac{p(p+1)}{2}$~\cite{AbsMahSep2008}. It is a differentiable manifold and its tangent space at a point $U\in\mathrm{St}(n,p)$ can be written as
\begin{align*}
    T_U \mathrm{St}(n,p) &=\{\m{\Delta}\in\mathbb{R}^{n\times p}\ | \ U^T\m{\Delta}+\m{\Delta}^TU=\m{0} \}\\
    &= \{ \m{UA}  + U_\perp \m{B} \in \mathbb{R}^{n\times p} \ | \ \m{A}\in \mathrm{Skew}(p), \ \m{B} \in \mathbb{R}^{(n-p)\times p}\}.
\end{align*}
Notice that for a given $\m{\Delta}\in T_U \mathrm{St}(n,p)$, $\m{A}$ is uniquely defined while $\m{B}$ is not since, for all $\m{R}\in\mathrm{O}(n-p)$, $U_\perp \m{B}=(U_\perp \m{R})(\m{R}^T \m{B})$ and $U_\perp R$ remains an orthogonal completion of $U$.
\subsection{Metrics and distances} On a manifold, it is customary for the distance to be induced by a Riemannian metric (or \emph{metric} for short). The Stiefel manifold admits two classical metrics~\cite{EdelmanArias98}, briefly reviewed next.
The first is the \emph{Euclidean metric}. This metric is inherited from the ambient Euclidean space $\mathbb{R}^{n\times p}$ and given by
\begin{equation*}
    \langle \m{\Delta},\widetilde{\m{\Delta}}\rangle_{\mathrm{E}} = \Tr ( \m{\Delta}^T\widetilde{\m{\Delta}})
    = \Tr (\m{A}^T\widetilde{\m{A}}) +\Tr (\m{B}^T\widetilde{\m{B}}) ,
\end{equation*}
for all  $\m{\Delta} =  \m{UA} +U_\perp \m{B}$ and $\widetilde{\m{\Delta}} = U \widetilde{\m{A}}+ U_\perp \widetilde{\m{B}}$ in $ T_U \mathrm{St}(n,p)$.
The second is the \emph{canonical metric}. It is inherited from the quotient geometry $\mathrm{St}(n,p) = \mathrm{O}(n)/\mathrm{O}(n-p)$~\cite{EdelmanArias98} and it is given by
\begin{equation*}
     \langle \m{\Delta},\widetilde{\m{\Delta}}\rangle_{\mathrm{c}} = \Tr (\m{\Delta}^T(\m{I}_n-\frac{1}{2}\m{UU}^T)\widetilde{\m{\Delta}})= \frac{1}{2}\Tr (\m{A}^T\widetilde{\m{A}} )+\Tr (\m{B}^T\widetilde{\m{B}}). 
\end{equation*}
These two metrics are subsumed by the family of metrics introduced in~\cite{Huper2021}. For convenience, we use another parameterization of the family than the authors and define the $\beta$-metric, with $\beta>0$, by 
\begin{equation}\label{eq:beta_metric}
    \langle\m{\Delta},\widetilde{\m{\Delta}}\rangle_\beta = \Tr (\m{\Delta}^T(\m{I}_n-(1-\beta)\m{UU}^T)\widetilde{\m{\Delta}})=\beta \Tr( \m{A}^T\widetilde{\m{A}}) +\Tr( \m{B}^T\widetilde{\m{B}}).
\end{equation}
This $\beta$-parameterization has already been used~\cite{absil2024ultimate,mataigne2023bounds, mataigne2024efficient, Nguyengeodesics}. The Euclidean and canonical metrics correspond to $\beta = 1$ and $\beta = \frac{1}{2}$ respectively. The Riemannian submersion theory offers a geometric interpretation of this metric for all $\beta>0$~\cite{absil2024ultimate, Huper2021}. The norm induced by the $\beta$-metric is $\|\m{\Delta} \|_\beta= \sqrt{\langle \m{\Delta},\m{\Delta}\rangle_\beta}$, and the length of a continuously differentiable curve $\gamma:[0,1]\mapsto\mathrm{St}(n,p)$ is \begin{equation}\label{eq:length}
    l_\beta(\gamma) = \int_0^1 \|\Dot{\gamma}(t)\|_\beta \ \mathrm{d}t,
\end{equation}
where $\Dot{\gamma}$ denotes the time derivative of $\gamma$. Finally, for all $U,\widetilde{U}\in \mathrm{St}(n,p)$, the distance (also termed \emph{geodesic distance}) is 
\begin{equation}\label{eq:geodesicdistance}
     d_\beta(U,\widetilde{U})=\inf_\gamma\{l_\beta(\gamma)\ |\  \gamma(0)=U,\ \gamma(1) = \widetilde{U}\}.
 \end{equation}
  Since $\beta$-geodesics exist for all times~\cite{Huper2021}, the ``$\inf$'' in \eqref{eq:geodesicdistance} is in fact a ``$\min$'', by the Hopf-Rinow theorem.
 \begin{figure}
     \centering
     \includegraphics[width = 7cm]{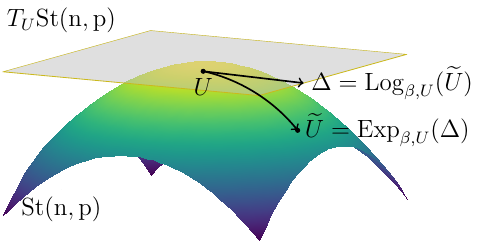}
     \caption{An artist view of the Stiefel manifold $\mathrm{St}(n,p)$, the tangent space $T_U\mathrm{St}(n,p)$, the Riemannian exponential and logarithm.}
     \label{fig:exp_log}
 \end{figure}
 
 \subsection{The Riemannian exponential and logarithm} %Curves starting from a point $U\in\mathrm{St}(n,p)$ that are locally minimal paths and parameterized with constant speed are called geodesics. 
 Exponential and logarithmic maps are the hammer and anvil on a manifold. The first allows to travel in a ``straight line'', i.e., along a geodesic. The second indicates which geodesic to follow to connect two points via the shortest path. 
 
 Formally, the Riemmanian exponential is the function $\mathrm{Exp}_{\beta,U}:T_U\mathrm{St}(n,p)\mapsto\mathrm{St}(n,p)$  mapping $\m{\Delta}$ to the point reached at unit time by the geodesic with starting point $U$ and initial velocity $\m{\Delta}$~(see, e.g., \cite[Chap.~II, Sec.~2]{sakai1996riemannian}). The Riemannian logarithm $\mathrm{Log}_{\beta,U}(\widetilde{U})$ is the function that returns the set of all minimal-norm tangent vectors $\Delta\in T_U\mathrm{St}(n,p)$ such that $\mathrm{Exp}_{\beta,U}(\m{\Delta})=\widetilde{U}$. These concepts are illustrated in~\cref{fig:exp_log}. % In a neighborhood of $U$, the Riemannian logarithm is the inverse of the Riemannian exponential.
An explicit expression for the Riemannian exponential is given next.% computing the endpoint by following a geodesic, given a starting point $U$ and an initial shooting direction $\m{\Delta}\in T_U\mathrm{St}(n,p)$, could be written in the form of Theorem~\cref{thm:geodesics}~\cite{ZimmermannRalf22}.
  \begin{thm}{The Riemannian exponential~\cite[Eq.~10]{ZimmermannRalf22}.}\label{thm:geodesics}
    \it
    For all $U\in \mathrm{St}(n,p)$ and $\m{\Delta} \in T_U\mathrm{St}(n,p)$, 
    \begin{equation}\label{eq:paramgeogeneral}
        \mathrm{Exp}_{\beta,U}(\m{\Delta}) = \begin{bmatrix}
            U & \ \m{Q}
        \end{bmatrix}\exp\left(\begin{bmatrix}
            2\beta \m{A}& -\m{B}^T\\
            \m{B} & \m{0}
        \end{bmatrix}\right)\m{I}_{n\times p}\exp\left((1-2\beta)\m{A}\right),
    \end{equation}
    where $\m{A} := U^T \m{\Delta}\in \mathrm{Skew}(p)$ and $\m{Q}\m{B} := (\m{I}-UU^T)\m{\Delta}\in\mathbb{R}^{n\times p}$ is any matrix decomposition where $\m{Q}\in\mathrm{St}(n,n-p)$, $\m{Q}^T U=\m{0}$ and $\m{B}\in\mathbb{R}^{n-p\times p}$.
\end{thm}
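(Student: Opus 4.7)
The plan is to introduce the curve
\begin{equation*}
\gamma(t) := \begin{bmatrix} U & \m{Q} \end{bmatrix}\exp\!\left(t\begin{bmatrix} 2\beta \m{A}& -\m{B}^T\\ \m{B} & \m{0} \end{bmatrix}\right)\m{I}_{n\times p}\exp\!\left(t(1-2\beta)\m{A}\right),
\end{equation*}
so that the claimed formula is $\gamma(1)$, and to verify three properties: (i) $\gamma(t)\in\mathrm{St}(n,p)$ for every $t\in\mathbb{R}$, (ii) $\gamma(0)=U$ and $\dot\gamma(0)=\m{\Delta}$, and (iii) $\gamma$ solves the $\beta$-geodesic equation. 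Properties (i) and (ii) make $\gamma$ a differentiable curve on the Stiefel manifold with prescribed initial position and velocity; (iii) upgrades it to \emph{the} geodesic with those initial data, and the theorem follows by evaluation at $t=1$.

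Properties (i) and (ii) are short. The block matrix $M:=\begin{bmatrix} 2\beta \m{A}& -\m{B}^T\\ \m{B} & \m{0} \end{bmatrix}$ is skew-symmetric because $\m{A}\in\mathrm{Skew}(p)$, so $\exp(tM)\in\mathrm{O}(n)$; left-multiplying by the orthogonal matrix $[U,\m{Q}]$, truncating via $\m{I}_{n\times p}$, and right-multiplying by the orthogonal matrix $\exp(t(1-2\beta)\m{A})$ produces an $n\times p$ matrix with orthonormal columns, giving (i). For (ii), $\gamma(0)=[U,\m{Q}]\m{I}_{n\times p}=U$, and the product rule together with $\m{A}=U^T\m{\Delta}$ and $\m{Q}\m{B}=(\m{I}_n-UU^T)\m{\Delta}$ yields
\begin{equation*}
\dot\gamma(0) = [U,\m{Q}]\begin{bmatrix} 2\beta\m{A}\\ \m{B}\end{bmatrix}+(1-2\beta)U\m{A} = U\m{A}+\m{Q}\m{B} = UU^T\m{\Delta}+(\m{I}_n-UU^T)\m{\Delta} = \m{\Delta}.
\end{equation*}

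The main obstacle is property (iii), since it requires an explicit form for the geodesic equation of the $\beta$-metric. Two routes are natural. The direct one applies Euler-Lagrange to the energy $\tfrac12\int\|\dot\gamma\|_\beta^2\,dt$ under the constraint $\gamma^T\gamma=\m{I}_p$, producing the second-order ODE that characterises $\beta$-geodesics, and then checks by repeated differentiation of the matrix exponentials that $\gamma$ above satisfies it. The more conceptual route, developed in~\cite{Huper2021}, realises the $\beta$-family through a Riemannian submersion from $\mathrm{O}(n)$ endowed with a $\beta$-dependent left-invariant metric down to $\mathrm{St}(n,p)=\mathrm{O}(n)/\mathrm{O}(n-p)$; horizontal one-parameter subgroups of $\mathrm{O}(n)$ then project to $\beta$-geodesics, and the trailing factor $\exp(t(1-2\beta)\m{A})$ appears naturally as the correction that re-expresses the horizontal lift in a frame adapted to $U$ when $\beta\neq\tfrac12$. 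In the present paper, the cleanest option is to invoke the derivation of~\cite[Eq.~10]{ZimmermannRalf22} directly, since redoing the submersion argument from scratch would inflate the preliminaries well beyond what the rest of the work requires.
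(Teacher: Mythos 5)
The paper does not prove this theorem; it is stated as a known result and referred directly to~\cite[Eq.~10]{ZimmermannRalf22}, with no \verb|proof| environment following it. Your proposal arrives at exactly that conclusion in its final sentence, namely that the cleanest course is to invoke the cited derivation rather than reconstruct it, so your recommended route coincides with what the paper actually does.

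That said, the two warm-up verifications you carry out are correct and worth noting. For (i), the block matrix $M:=\left[\begin{smallmatrix}2\beta A & -B^T\\ B & 0\end{smallmatrix}\right]$ is indeed skew-symmetric, so $\exp(tM)\in\mathrm{O}(n)$, and composing with the orthogonal $[U\ Q]$, the truncation $I_{n\times p}$, and the orthogonal $\exp(t(1-2\beta)A)$ yields a curve in $\mathrm{St}(n,p)$. For (ii), your computation $\dot\gamma(0)=2\beta UA+QB+(1-2\beta)UA=UA+QB=\Delta$ is right. These two properties alone, however, only show that $\gamma$ is \emph{some} smooth curve on the Stiefel manifold matching the initial data; they do not establish that it is a $\beta$-geodesic, which is the substance of the theorem. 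That is carried entirely by your step (iii), which you sketch at a high level (Euler--Lagrange under the constraint, or the Riemannian-submersion picture from~\cite{Huper2021}) but do not execute. If one were to prove the theorem rather than cite it, (iii) is where all the work lies, so the proposal as written is a partial verification plus an appeal to authority, not a self-contained proof. As a reading of the paper's intent, though, it is accurate: the authors treat the result as imported and do not re-derive it.
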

If $n>2p$, we have $\mathrm{rank}(B)\leq \min(n-p, p)=p$. This yields a reduced decomposition $QB = \widetilde{Q}\widetilde{B}$ where $\widetilde{Q}\in\mathrm{St}(n,p)$ and $\widetilde{B}\in\mathbb{R}^{p\times p}$. Therefore,~\cref{thm:geodesics} admits a reduced formulation where $\m{Q}\in\mathrm{St}(n,p)$ and $\m{B}\in\mathbb{R}^{p\times p}$~\cite{RentmeestersQ, ZimmermannRalf22}.
The first matrix exponential of \eqref{eq:paramgeogeneral} belongs then to $\mathbb{R}^{2p\times 2p}$ instead of $\mathbb{R}^{n\times n}$. This observation brings significant computational speed-up for large scale applications.

\begin{prob}{The Logarithm problem~\cite[Prob.~2.2]{mataigne2024efficient}.}\label{prob:truelogproblem}
    \it
    Let $U,\m{\widetilde{U}}\in \mathrm{St}(n,p)$ and $\beta>0$. The Riemannian logarithm $\mathrm{Log}_{\beta,U}(\widetilde{U})$ returns all $\m{\Delta}\in T_{U}\mathrm{St}(n,p)$ such that
    \begin{equation*}
         \mathrm{Exp}_{\beta,U}(\m{\Delta})=\widetilde{U} \text{ and } \|\m{\Delta}\|_\beta = d_\beta(U,\widetilde{U}).
    \end{equation*}
    The curves $[0,1]\ni t\mapsto \mathrm{Exp}_{\beta,U}(t\m{\Delta})$ are then \emph{minimal geodesics}.
\end{prob}
%Because the Stiefel manifold is complete, the Hopf-Rinow theorem~\cite[Chap.~7, Thm.~2.8]{DoCarmo2013riemannian} ensures the existence of a minimal geodesic between any two points on the Stiefel manifold. However,
On compact Riemannian manifolds, every geodesic has a \emph{cut time} $t^*$~\cite{sakai1996riemannian}: the geodesic stays minimal as long as $t \leq t^*$, not beyond. The point $\mathrm{Exp}_{\beta,U}(t^*\m{\Delta})$ is termed the \emph{cut point}. For a given point $U$, the set of all cut points is called the \emph{cut locus}. A shooting direction $\Delta$ is thus a Riemannian logarithm if and only if the destination is reached along the geodesic before or when visiting the cut point. On the Stiefel manifold, the cut point is known only for some specific geodesics. A simpler problem is to study the distance to the \emph{nearest} cut point, called the injectivity radius $\mathrm{inj}_{\mathrm{St}(n,p),\beta}$. Strictly within this radius,~\cref{prob:truelogproblem} is guaranteed to have a unique solution. Recent works~\cite{absil2024ultimate, stoye2024injectivity, zimmermann2024high} showed that $0.894\pi<\mathrm{inj}_{\mathrm{St}(n,p),\beta}<0.914\pi$ when $\beta = \frac{1}{2}$, $\mathrm{inj}_{\mathrm{St}(n,p),\beta}=\pi$ when $\beta=1$~\cite{zimmermann2024injectivity} and $\mathrm{inj}_{\mathrm{St}(n,p),\beta}\leq \pi$ for all $\beta>0$. However, as shown in~\cref{sec:diameter}, the diameter of Stiefel grows with $p$ and this jeopardizes the relevance of the injectivity radius. There is thus a need to provide new insights, allowing to assert the quality of candidates for the Riemannian logarithm.
 
 \section{Equivalence of the geodesic $\beta$-distances}\label{sec:equivalence} An important step to study the geodesic distances induced by the $\beta$-metrics is to understand how these distances relate to each other as $\beta$ varies. In this section, we prove that for all pairs $\beta_1,\beta_2>0$, the distances $d_{\beta_1}$ and $d_{\beta_2}$ are bilipschitz equivalent, i.e., there is $\alpha,\delta>0$ such that for all $U,\widetilde{U}\in \mathrm{St}(n,p)$, we have $\alpha d_{\beta_1}(U,\widetilde{U})\leq d_{\beta_2}(U,\widetilde{U})\leq \delta d_{\beta_1}(U,\widetilde{U})$. 

We start with an illustrative numerical experiment to motivate our goal. We use~\cite[Alg.~1~and~4]{ZimmermannRalf22} to estimate the Euclidean ($\beta_1=1$) and canonical ($\beta_2=\frac{1}{2}$) geodesic distances between randomly generated matrices of $\mathrm{St}(4,2)$. The results are shown in~\cref{fig:equivalence}. We can guess the best Lipschitz constants in this case, $\alpha=\frac{\sqrt{2}}{2}$ and $\delta = 1$ shown by a solid black line and a dashed green line in~\cref{fig:equivalence} respectively. 

For small dimensions, such as $(n,p)=(4,2)$, numerical estimates may very well be exact (up to machine precision), which is why~\cref{fig:equivalence} and further ones tend to obey the theoretical results that we introduce in this paper. However, this does not hold true for large dimensions that one encounters in applications where the algorithms can significantly overestimate the distance by failing to compute a geodesic that is minimal. In fact, even for small dimension, we will observe in~\cref{fig:canonicalsamples} failures of the algorithms. 

\cref{lem:firstlem} is the key observation which leads to the bilipschitz equivalence. It holds for any pair of Riemannian metrics on the Stiefel manifold, which is why we introduce subscripts ``$\mathrm{a},\mathrm{b}$'' instead of ``$\beta_1,\beta_2$''.
\begin{figure}
    \centering
    \includegraphics[width = 8cm]{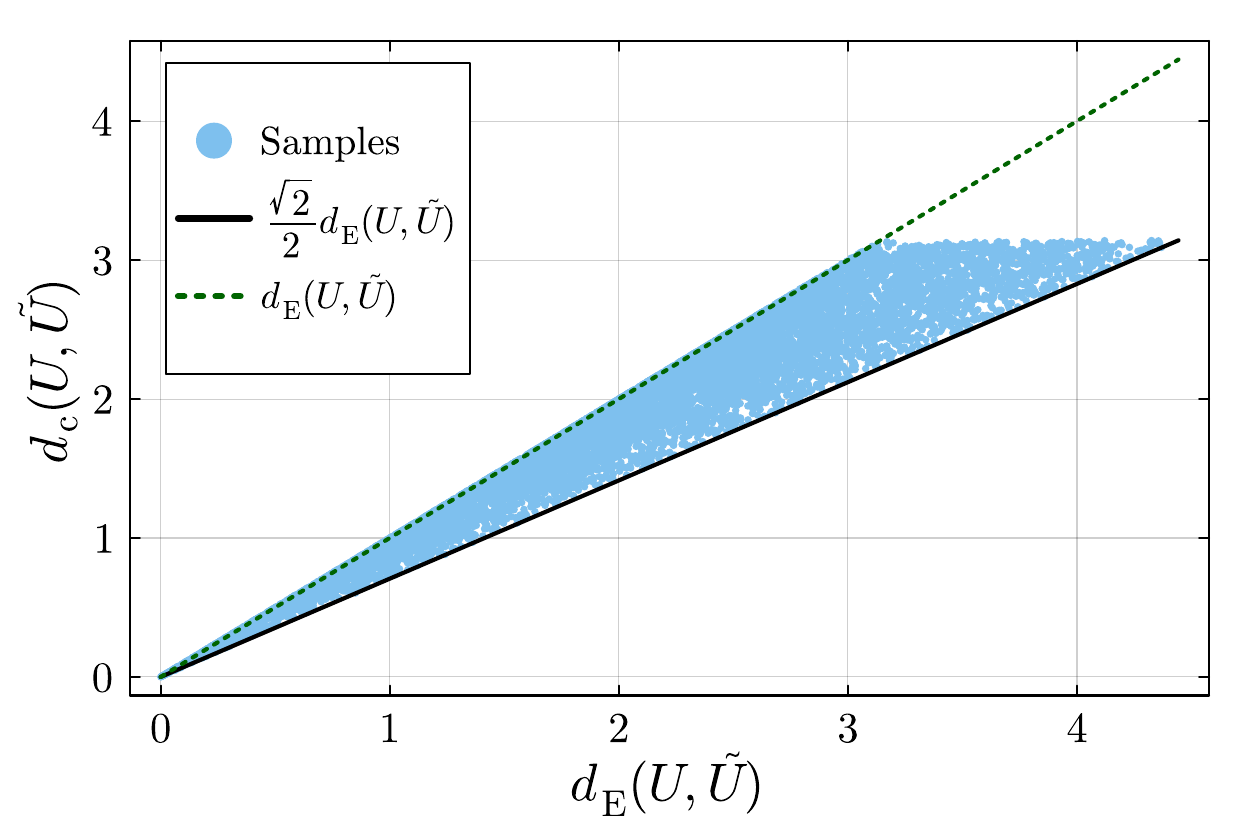}
    \caption{Bounds between the canonical and the Euclidean distance on the Stiefel manifold $\mathrm{St}(4,2)$: $\frac{\sqrt{2}}{2} d_{\mathrm{E}}(U,\widetilde{U})\leq d_{\mathrm{c}}(U,\widetilde{U})\leq d_{\mathrm{E}}(U,\widetilde{U})$ for all $U,\widetilde{U}\in \mathrm{St}(4,2)$ (see~\cref{cor:special}). The plot shows the canonical distances $d_\mathrm{c}$ ($\beta = \frac{1}{2}$) and the Euclidean distances $d_\mathrm{E}$ ($\beta=1$) estimated for 10000 randomly generated pairs $(U,\widetilde{U})$ on $\mathrm{St}(4,2)$.}
    \label{fig:equivalence}
\end{figure}

\begin{lem}\label{lem:firstlem}
    \it
    Let $\langle\cdot,\cdot\rangle_{\mathrm{a}}$ and $\langle\cdot,\cdot\rangle_{\mathrm{b}}$ be two Riemannian metrics on $\mathrm{St}(n,p)$. If $\|\m{\Delta}\|_\mathrm{a}\leq \mu \|\m{\Delta}\|_\mathrm{b}$ for all $U\in\mathrm{St}(n,p)$ and $\m{\Delta}\in T_U\mathrm{St}(n,p)$, then $d_\mathrm{a}(U,\widetilde{U})\leq \mu d_\mathrm{b}(U,\widetilde{U})$ for all $U,\widetilde{U}\in\mathrm{St}(n,p)$.
\end{lem}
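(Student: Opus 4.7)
The plan is to argue directly from the definition of geodesic distance in~\eqref{eq:geodesicdistance} as an infimum of curve lengths. The hypothesis controls the two norms \emph{pointwise} on every tangent space, and lengths are obtained by integrating norms along curves, so the inequality should transfer to lengths and then, by taking the infimum, to distances. No deep property of the Stiefel manifold is needed beyond the fact that $\beta$-geodesics exist for all times (invoked only to replace the infimum by a minimum, which is convenient but not strictly necessary for the inequality).

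Concretely, I would fix $U,\widetilde{U}\in\mathrm{St}(n,p)$ and let $\gamma:[0,1]\to\mathrm{St}(n,p)$ be an arbitrary continuously differentiable curve with $\gamma(0)=U$ and $\gamma(1)=\widetilde{U}$. For each $t\in[0,1]$, the velocity $\dot\gamma(t)$ lies in $T_{\gamma(t)}\mathrm{St}(n,p)$, so the hypothesis applied at the point $\gamma(t)$ gives
\begin{equation*}
    \|\dot\gamma(t)\|_\mathrm{a}\ \leq\ \mu\,\|\dot\gamma(t)\|_\mathrm{b}.
\end{equation*}
Integrating this over $[0,1]$ and using the length formula~\eqref{eq:length} yields $l_\mathrm{a}(\gamma)\leq \mu\, l_\mathrm{b}(\gamma)$, that is, the curve-length functionals are related by the same multiplicative constant as the norms.

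To pass to distances, I would simply take the infimum over admissible curves. On the one hand, $d_\mathrm{a}(U,\widetilde{U})\leq l_\mathrm{a}(\gamma)$ for every such $\gamma$; on the other, $l_\mathrm{a}(\gamma)\leq \mu\, l_\mathrm{b}(\gamma)$ by the step above. Combining, $d_\mathrm{a}(U,\widetilde{U})\leq \mu\, l_\mathrm{b}(\gamma)$ for every $\gamma$, and taking the infimum over $\gamma$ on the right-hand side (which equals $d_\mathrm{b}(U,\widetilde{U})$) delivers the claim. Alternatively, one can invoke the Hopf--Rinow argument already mentioned in the excerpt to pick a $b$-minimal geodesic $\gamma^\star$ realizing $d_\mathrm{b}(U,\widetilde{U})=l_\mathrm{b}(\gamma^\star)$ and write $d_\mathrm{a}(U,\widetilde{U})\leq l_\mathrm{a}(\gamma^\star)\leq \mu\, l_\mathrm{b}(\gamma^\star)=\mu\, d_\mathrm{b}(U,\widetilde{U})$.

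There is no real obstacle to this proof; the only subtlety is a bookkeeping one, namely making clear that the hypothesis is a pointwise statement holding at \emph{every} base point $U\in\mathrm{St}(n,p)$, so that it can be applied at each $\gamma(t)$ along the curve. Consequently the argument does not depend on the specific structure of the $\beta$-metrics and holds for any two Riemannian metrics on $\mathrm{St}(n,p)$, which justifies the neutral ``$\mathrm{a},\mathrm{b}$'' notation adopted in the statement.
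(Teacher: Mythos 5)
Your proposal is correct and follows the same route as the paper's proof: bound the $\mathrm{a}$-length of an arbitrary admissible curve by $\mu$ times its $\mathrm{b}$-length using the pointwise norm inequality, then pass to the infimum over curves. The Hopf--Rinow variant you mention at the end is an unnecessary detour, as you yourself note, and the paper does not use it here.
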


\begin{proof}
    Let $U,\widetilde{U}\in\mathrm{St}(n,p)$. For every differentiable curve $\gamma:[0,1]\rightarrow \mathrm{St}(n,p)$ with $\gamma(0)=U$ and $\gamma(1)=\widetilde{U}$, it holds that
    \begin{equation*}
        d_\mathrm{a}(U,\widetilde{U})\leq l_\mathrm{a}(\gamma)=\int_0^1\|\Dot{\gamma}(t)\|_\mathrm{a}\ \mathrm{d}t\leq \mu \int_0^1\|\Dot{\gamma}(t)\|_\mathrm{b}\ \mathrm{d}t=\mu l_\mathrm{b}(\gamma).
    \end{equation*}
    Hence, by definition~\eqref{eq:geodesicdistance}, we obtain
    \begin{align*}
        d_\mathrm{a}(U,\widetilde{U})\leq \mu \inf\{l_b(\gamma)\ | \gamma(0)=U,\ \gamma(1)=\widetilde{U}\}=\mu d_\mathrm{b}(U,\widetilde{U}).
    \end{align*} 
\end{proof}

 We show in~\cref{thm:equivalence} that the bilipschitz equivalence of the $\beta$-distances follows from~\cref{lem:firstlem} since the $\beta$-metrics satisfy its hypotheses.
\begin{thm}\label{thm:equivalence}
    \it
     For all $U,\widetilde{U}\in\mathrm{St}(n,p)$,
    \begin{enumerate}
        \item[(a)] If $0<\beta_2\leq\beta_1$, then $d_{\beta_1}(U,\widetilde{U})\leq \sqrt{\frac{\beta_1}{\beta_2}}d_{\beta_2}(U,\widetilde{U})$.
        \item[(b)] If $0<\beta_1\leq\beta_2$, then $d_{\beta_1}(U,\widetilde{U})\leq d_{\beta_2}(U,\widetilde{U})$.
    \end{enumerate}
\end{thm}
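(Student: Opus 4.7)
The plan is to reduce the claimed distance inequalities to pointwise norm inequalities on tangent spaces and then invoke \cref{lem:firstlem} with the appropriate Lipschitz constant. Fix $U\in\mathrm{St}(n,p)$ and any tangent vector $\m{\Delta} = U\m{A} + U_\perp \m{B}\in T_U\mathrm{St}(n,p)$. From the definition \eqref{eq:beta_metric} of the $\beta$-metric, the induced norm satisfies
\begin{equation*}
    \|\m{\Delta}\|_\beta^2 = \beta\,\Tr(\m{A}^T\m{A}) + \Tr(\m{B}^T\m{B}),
\end{equation*}
a decomposition that makes the dependence on $\beta$ transparent: only the ``horizontal'' $\m{A}$-part is scaled by $\beta$.

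For part~(b), assume $0<\beta_1\leq\beta_2$. Then a termwise comparison of the two expressions $\|\m{\Delta}\|_{\beta_1}^2 = \beta_1\Tr(\m{A}^T\m{A})+\Tr(\m{B}^T\m{B})$ and $\|\m{\Delta}\|_{\beta_2}^2 = \beta_2\Tr(\m{A}^T\m{A})+\Tr(\m{B}^T\m{B})$ yields $\|\m{\Delta}\|_{\beta_1}\leq\|\m{\Delta}\|_{\beta_2}$ since $\Tr(\m{A}^T\m{A})\geq 0$. Applying \cref{lem:firstlem} with $\mathrm{a}=\beta_1$, $\mathrm{b}=\beta_2$ and $\mu = 1$ concludes this case.

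For part~(a), assume $0<\beta_2\leq\beta_1$, so $\beta_1/\beta_2\geq 1$. The plan is to establish the pointwise bound
\begin{equation*}
    \|\m{\Delta}\|_{\beta_1}^2 = \beta_1\Tr(\m{A}^T\m{A})+\Tr(\m{B}^T\m{B}) \leq \beta_1\Tr(\m{A}^T\m{A}) + \tfrac{\beta_1}{\beta_2}\Tr(\m{B}^T\m{B}) = \tfrac{\beta_1}{\beta_2}\|\m{\Delta}\|_{\beta_2}^2,
\end{equation*}
where the middle inequality uses $\beta_1/\beta_2\geq 1$ and $\Tr(\m{B}^T\m{B})\geq 0$. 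Taking square roots gives $\|\m{\Delta}\|_{\beta_1}\leq\sqrt{\beta_1/\beta_2}\,\|\m{\Delta}\|_{\beta_2}$, and another application of \cref{lem:firstlem}, this time with $\mu = \sqrt{\beta_1/\beta_2}$, finishes the proof.

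Since the argument is essentially a pointwise convex-combination comparison of two inner products at each tangent space, there is no real analytic obstacle; the only thing to keep straight is which of $\beta_1,\beta_2$ multiplies which trace, so that the inequality flips in the correct direction between parts~(a) and~(b). No properties of the geodesic equation itself are needed, which is exactly the appeal of routing the argument through \cref{lem:firstlem}.
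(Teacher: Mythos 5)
Your proof is correct and follows essentially the same route as the paper's: establish the pointwise tangent-space inequalities $\|\m{\Delta}\|_{\beta_1}^2\leq\tfrac{\beta_1}{\beta_2}\|\m{\Delta}\|_{\beta_2}^2$ (when $\beta_2\leq\beta_1$) and $\|\m{\Delta}\|_{\beta_1}^2\leq\|\m{\Delta}\|_{\beta_2}^2$ (when $\beta_1\leq\beta_2$) from the decomposition $\|\m{\Delta}\|_\beta^2=\beta\|\m{A}\|_\mathrm{F}^2+\|\m{B}\|_\mathrm{F}^2$, then invoke \cref{lem:firstlem}. The paper's proof is identical modulo which term the inequality is written on.
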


\begin{proof}
        (a) Let $0<\beta_2 \leq \beta_1$. For all $\widehat{U}\in \mathrm{St}(n,p)$ and all $\m{\Delta}\in T_{\widehat{U}}\mathrm{St}(n,p)$, we have
        \begin{equation}
            \|\m{\Delta}\|_{\beta_1}^2 = \beta_1\|\m{A}\|_{\mathrm{F}}^2+\|\m{B}\|_{\mathrm{F}}^2\leq \beta_1 \|\m{A}\|_{\mathrm{F}}^2+\frac{\beta_1}{\beta_2}\|\m{B}\|_{\mathrm{F}}^2=\frac{\beta_1}{\beta_2}\|\m{\Delta}\|_{\beta_2}^2,
        \end{equation} hence $ \|\m{\Delta}\|_{\beta_1}^2 \leq \frac{\beta_1}{\beta_2}\|\m{\Delta}\|_{\beta_2}^2$. By~\cref{lem:firstlem}, the claim follows.
        
        (b) Let $0<\beta_1 \leq \beta_2$. For all $\widehat{U}\in \mathrm{St}(n,p)$ and all $ \m{\Delta} \in T_{\widehat{U}}\mathrm{St}(n,p)$, we have
        \begin{equation}
            \|\m{\Delta}\|_{\beta_1}^2 = \beta_1\|\m{A}\|_{\mathrm{F}}^2+\|\m{B}\|_{\mathrm{F}}^2\leq \beta_2 \|\m{A}\|_{\mathrm{F}}^2+\|\m{B}\|_{\mathrm{F}}^2=\|\m{\Delta}\|_{\beta_2}^2,
        \end{equation} hence $ \|\m{\Delta}\|_{\beta_1}^2 \leq \|\m{\Delta}\|_{\beta_2}^2$. By~\cref{lem:firstlem}, the claim follows.
\end{proof}

\cref{cor:allbounds} summarizes the bilipchitz equivalence between all the geodesic distances under the $\beta$-metrics.

\begin{cor}\label{cor:allbounds}
    \it
    $\min\left\{1,\sqrt{\frac{\beta_1}{\beta_2}}\right\} d_{\beta_2}(U,\widetilde{U})\leq d_{\beta_1}(U,\widetilde{U})\leq \max\left\{1, \sqrt{\frac{\beta_1}{\beta_2}}\right\} d_{\beta_2}(U,\widetilde{U})$\linebreak for all $U,\widetilde{U}\in \mathrm{St}(n,p).$
\end{cor}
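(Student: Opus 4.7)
The plan is to derive the corollary directly from \cref{thm:equivalence} by a two-case analysis according to whether $\beta_1 \geq \beta_2$ or $\beta_1 \leq \beta_2$, applying parts (a) and (b) both as stated and with the roles of $\beta_1$ and $\beta_2$ swapped.

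First, I would suppose $\beta_1 \geq \beta_2$, so that $\sqrt{\beta_1/\beta_2} \geq 1$, whence $\max\{1,\sqrt{\beta_1/\beta_2}\} = \sqrt{\beta_1/\beta_2}$ and $\min\{1,\sqrt{\beta_1/\beta_2}\} = 1$. The upper bound is then part (a) of \cref{thm:equivalence} applied to the ordered pair $(\beta_1,\beta_2)$, giving $d_{\beta_1}(U,\widetilde{U}) \leq \sqrt{\beta_1/\beta_2}\, d_{\beta_2}(U,\widetilde{U})$. For the lower bound, since $\beta_2 \leq \beta_1$, part (b) applied with the indices interchanged yields $d_{\beta_2}(U,\widetilde{U}) \leq d_{\beta_1}(U,\widetilde{U})$, i.e., $d_{\beta_1}(U,\widetilde{U}) \geq 1 \cdot d_{\beta_2}(U,\widetilde{U})$, which is the required lower bound.

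The case $\beta_1 \leq \beta_2$ is entirely symmetric. Part (b) applied to $(\beta_1,\beta_2)$ directly produces the upper bound $d_{\beta_1}(U,\widetilde{U}) \leq d_{\beta_2}(U,\widetilde{U}) = \max\{1,\sqrt{\beta_1/\beta_2}\}\, d_{\beta_2}(U,\widetilde{U})$ since now $\sqrt{\beta_1/\beta_2} \leq 1$. For the lower bound, I would apply part (a) to the pair $(\beta_2,\beta_1)$, which is the admissible ordering in this regime, obtaining $d_{\beta_2}(U,\widetilde{U}) \leq \sqrt{\beta_2/\beta_1}\, d_{\beta_1}(U,\widetilde{U})$; rearranging gives $d_{\beta_1}(U,\widetilde{U}) \geq \sqrt{\beta_1/\beta_2}\, d_{\beta_2}(U,\widetilde{U}) = \min\{1,\sqrt{\beta_1/\beta_2}\}\, d_{\beta_2}(U,\widetilde{U})$.

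The two cases together cover all $\beta_1,\beta_2 > 0$, with the boundary $\beta_1 = \beta_2$ trivially satisfying both bounds as equalities. There is essentially no obstacle here: the argument is pure bookkeeping of which regime controls the $\min$ and which controls the $\max$, and the corollary follows at once by assembling the four instances of \cref{thm:equivalence}.
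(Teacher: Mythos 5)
Your proof is correct and follows exactly the route the paper takes: the paper's own proof is the one-line instruction to combine \cref{thm:equivalence}.(a) and \cref{thm:equivalence}.(b), and your write-up simply makes explicit the case split and the role-swapping that this combination entails.
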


\begin{proof}
Combine~\cref{thm:equivalence}.(a) and~\cref{thm:equivalence}.(b).
\end{proof}

As a special case, we write~\cref{cor:allbounds} for the popular choice of the Euclidean ($\beta=1$) and the canonical ($\beta =\frac{1}{2}$) geodesic distances. This leads us to recover the Lipschitz constants that we conjectured in~\cref{fig:equivalence}.
\begin{cor}\label{cor:special}
    \it
    $\frac{\sqrt{2}}{2} d_{\mathrm{E}}(U,\widetilde{U})\leq d_{\mathrm{c}}(U,\widetilde{U})\leq d_{\mathrm{E}}(U,\widetilde{U})\leq \sqrt{2}d_{\mathrm{c}}(U,\widetilde{U})$\linebreak for all $U,\widetilde{U}\in \mathrm{St}(n,p)$.
\end{cor}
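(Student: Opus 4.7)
The plan is to simply specialize Corollary 3.3 to the two specific metrics in question, namely the Euclidean metric (corresponding to $\beta=1$) and the canonical metric (corresponding to $\beta=\tfrac{1}{2}$). Since Corollary 3.3 already gives bilipschitz constants between any two $\beta$-distances in terms of the ratio $\beta_1/\beta_2$, the only work left is plugging in numbers and orienting the inequalities correctly.

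Concretely, I would first apply Corollary 3.3 with $\beta_1 = \tfrac{1}{2}$ and $\beta_2 = 1$ (so that $d_{\beta_1} = d_{\mathrm{c}}$ and $d_{\beta_2} = d_{\mathrm{E}}$). The ratio gives $\sqrt{\beta_1/\beta_2} = \tfrac{\sqrt{2}}{2}$, whence $\min\{1,\tfrac{\sqrt{2}}{2}\} = \tfrac{\sqrt{2}}{2}$ and $\max\{1,\tfrac{\sqrt{2}}{2}\} = 1$. This yields $\tfrac{\sqrt{2}}{2}\,d_{\mathrm{E}}(U,\widetilde{U}) \leq d_{\mathrm{c}}(U,\widetilde{U}) \leq d_{\mathrm{E}}(U,\widetilde{U})$, which accounts for the first two inequalities in the chain.

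Next, I would apply Corollary 3.3 again, this time with $\beta_1 = 1$ and $\beta_2 = \tfrac{1}{2}$, so $\sqrt{\beta_1/\beta_2} = \sqrt{2}$. This gives $d_{\mathrm{c}}(U,\widetilde{U}) \leq d_{\mathrm{E}}(U,\widetilde{U}) \leq \sqrt{2}\,d_{\mathrm{c}}(U,\widetilde{U})$, from which I extract the rightmost inequality $d_{\mathrm{E}}(U,\widetilde{U}) \leq \sqrt{2}\,d_{\mathrm{c}}(U,\widetilde{U})$. Concatenating with the previous step produces the full four-term chain.

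There is really no obstacle here; the statement is a purely numerical specialization, and the only thing to be careful about is matching the correct ordering of $\beta_1,\beta_2$ to the right side of Corollary 3.3 so that one does not accidentally invert an inequality. As a sanity check, the constants $\tfrac{\sqrt{2}}{2}$ and $1$ match the slopes of the bounding lines conjectured numerically in \cref{fig:equivalence}, which confirms the orientation.
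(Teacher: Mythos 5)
Your proof is correct and matches the paper's approach, which also simply specializes Corollary~3.3 (\cref{cor:allbounds}) to $\beta\in\{\tfrac{1}{2},1\}$; you have merely spelled out the two substitutions and the resulting $\min/\max$ evaluations that the paper leaves implicit.
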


\begin{proof}
    Recall that $d_{\mathrm{E}}=d_{\beta=1}$ and $d_{\mathrm{c}}=d_{\beta=\frac{1}{2}}$.
\end{proof}
%\section{Optimality of the Lipschitz constants}\label{sec:optimalityequivalence}
We show that the Lipschitz constants of~\cref{thm:equivalence} are tight, as hinted by the results of our numerical experiment in~\cref{fig:equivalence}. This means that in~\cref{cor:allbounds}, $\min\left\{1,\sqrt{\frac{\beta_1}{\beta_2}}\right\}$ is the maximum coefficient for a lower bound and $\max\left\{1,\sqrt{\frac{\beta_1}{\beta_2}}\right\}$ is the minimum coefficient for an upper bound, both of which hold true for all pairs $U,\widetilde{U}\in\mathrm{St}(n,p)$.

\begin{thm}\label{thm:tightness}
    \it
    There is respectively $U,\widetilde{U}\in\mathrm{St}(n,p)$ such that\\
    \begin{enumerate}
        \item[(a)]  the bound of~\cref{thm:equivalence}.(a)
        is attained;  
        \item[(b)]  the bound of~\cref{thm:equivalence}.(b)
        is attained.    
    \end{enumerate}
\end{thm}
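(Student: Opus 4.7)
The plan is to exhibit, in each case, an explicit pair $(U,\widetilde{U})$ for which the inequality of \cref{thm:equivalence} becomes equality. The key observation driving the construction is that the geodesic formula~\eqref{eq:paramgeogeneral} becomes $\beta$-independent whenever either the off-diagonal block $B$ or the skew-symmetric block $A$ of the initial velocity is zero: if $B=0$, then $\mathrm{Exp}_{\beta,U}(UA) = U\exp(A)$, and if $A=0$, then $\mathrm{Exp}_{\beta,U}(U_\perp B) = [U\ Q]\exp\bigl(\bigl[\begin{smallmatrix} 0 & -B^T \\ B & 0 \end{smallmatrix}\bigr]\bigr) \m{I}_{n\times p}$, where the outer factor $\exp((1-2\beta)A)=\m{I}_p$ has disappeared. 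These are precisely the two regimes in which the pointwise inequalities $\|\m{\Delta}\|_{\beta_1}^2\leq (\beta_1/\beta_2)\|\m{\Delta}\|_{\beta_2}^2$ and $\|\m{\Delta}\|_{\beta_1}^2\leq\|\m{\Delta}\|_{\beta_2}^2$ used in the proof of \cref{thm:equivalence} are attained with equality.

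For part (a), I would set $U=\m{I}_{n\times p}$, pick any nonzero $A\in\Skewp$ whose norm $\|A\|_\mathrm{F}$ is small enough that $\sqrt{\beta_i}\,\|A\|_\mathrm{F}$ lies strictly inside the injectivity radius $\mathrm{inj}_{\mathrm{St}(n,p),\beta_i}$ for $i=1,2$, and define $\widetilde{U}=U\exp(A)$. The computation above shows that $t\mapsto U\exp(tA)$ is a $\beta$-geodesic from $U$ to $\widetilde{U}$ for every $\beta>0$, with $\beta$-length $\sqrt{\beta}\,\|A\|_\mathrm{F}$. Because the initial velocity has $\beta$-norm below the injectivity radius, this geodesic is minimal in both metrics, so $d_\beta(U,\widetilde{U})=\sqrt{\beta}\,\|A\|_\mathrm{F}$ and consequently $d_{\beta_1}(U,\widetilde{U}) = \sqrt{\beta_1/\beta_2}\,d_{\beta_2}(U,\widetilde{U})$.

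For part (b), assuming $n>p$ (when $n=p$, the tangent space consists only of vectors $UA$ with $B=0$ and equality is attained only at $U=\widetilde{U}$), I would take $U=\m{I}_{n\times p}$ with $U_\perp$ formed by the last $n-p$ columns of $\m{I}_n$, pick a nonzero $B\in\mathbb{R}^{(n-p)\times p}$ with $\|B\|_\mathrm{F}$ smaller than the injectivity radii of both metrics, and define $\widetilde{U}:=\mathrm{Exp}_{\beta,U}(U_\perp B)$, which by the observation above does not depend on $\beta$. Since $\|U_\perp B\|_\beta = \|B\|_\mathrm{F}$ is itself $\beta$-independent and still within the injectivity radius for both metrics, the geodesic $t\mapsto \mathrm{Exp}_{\beta,U}(tU_\perp B)$ is minimal, giving $d_{\beta_1}(U,\widetilde{U})=\|B\|_\mathrm{F}=d_{\beta_2}(U,\widetilde{U})$.

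The only non-routine point is justifying minimality of the two exhibited geodesics; this is the step where care is needed. I would invoke the fact that, since $\mathrm{St}(n,p)$ is compact, the injectivity radius $\mathrm{inj}_{\mathrm{St}(n,p),\beta}$ is strictly positive for every $\beta>0$, so that shrinking $\|A\|_\mathrm{F}$ (resp.~$\|B\|_\mathrm{F}$) sufficiently places the initial velocity strictly inside both injectivity radii and forces the constructed geodesic to realize the distance in both metrics simultaneously. Everything else is a direct bookkeeping consequence of the two simplified forms of the exponential map displayed above.
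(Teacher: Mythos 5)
Your proof is correct and follows essentially the same route as the paper's: take $B=0$ for part (a) and $A=0$ for part (b), observe that the exponential formula~\eqref{eq:paramgeogeneral} becomes $\beta$-independent, and shrink the initial velocity until the geodesic is minimal for both metrics so that length equals distance. The only substantive difference is that you make explicit the positivity-of-the-injectivity-radius argument justifying minimality, which the paper compresses into ``take $A$ (resp.\ $B$) small enough''; you also record the harmless caveat that part (b) needs $n>p$ to have a nonzero $B$, which the paper leaves implicit.
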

%\simcom{SIMCOM: Repeating the ``There is $U,\widetilde{U}\in\mathrm{St}(n,p)$ such that'' looked very very ugly. I find it not ambiguous if we take 1 min to read the proof.}
\begin{proof}
        (a) Let $0<\beta_2\leq\beta_1$, $U\in \mathrm{St}(n,p)$ and $\m{\Delta}=\m{UA}+U_\perp \m{B} \in T_U\mathrm{St}(n,p)$. Take $\m{B} =0$ and observe that~\eqref{eq:paramgeogeneral} reduces to 
        \begin{align*}
            \mathrm{Exp}_{\beta,U} (t\m{\Delta})%&= \exp\big(2\beta t \m{UA}U^T\big)U\exp\big[(1-2\beta)\m{A}t\big]\\
            %&=U\exp\big(2\beta t \m{A}\big)\exp\big[(1-2\beta)\m{A}t\big]\\
            &= U e^{\m{A}t},
        \end{align*}
        where we have used the definition of the matrix exponential and the fact that $U^T U = \m{I}_p$. Hence $ \gamma(t):=\mathrm{Exp}_{\beta,U} (t\Delta)$ does not depend on $\beta$. Further take $\m{A}$ small enough for $\gamma$ to be minimal between $U =  \gamma(0)$ and $\widetilde{U}:= \gamma(1)$. Then \begin{equation*}
            d_{\beta_1}(U,\widetilde{U}) = l_{\beta_1}(\gamma) = \|\m{\Delta}\|_{\beta_1} = \sqrt{\frac{\beta_1}{\beta_2}}\|\m{\Delta}\|_{\beta_2} = \sqrt{\frac{\beta_1}{\beta_2}} l_{\beta_2}(\gamma)=\sqrt{\frac{\beta_1}{\beta_2}}d_{\beta_2}(U,\widetilde{U}).
        \end{equation*} 
        (b) Let $0<\beta_1\leq\beta_2$, $U\in \mathrm{St}(n,p)$ and $\m{\Delta}=\m{UA}+U_\perp \m{B} \in T_U\mathrm{St}(n,p)$. Take $\m{A}=0$ and observe that~\eqref{eq:paramgeogeneral} reduces to \begin{align*}
             \mathrm{Exp}_{\beta,U} (t\m{\Delta}) %&= \exp\left(t\big( U_{\perp} \m{BU}^T - U \m{B}^T U_\perp^T\big)\right)U\\
            &= \begin{bmatrix}
                U&\ U_\perp 
            \end{bmatrix}
            \exp\left(t\begin{bmatrix}
                \m{0}&-\m{B}^T\\
                \m{B}&\m{0}
            \end{bmatrix}\right)\m{I}_{n\times p}.
        \end{align*}
        Again, $\gamma(t):= \mathrm{Exp}_{\beta,U} (t\Delta)$ does not depend on $\beta$. Further take $\m{B}$ small enough for $\gamma$ to be minimal between $U = \gamma(0)$ and $\widetilde{U}:=\gamma(1)$. Then 	    \begin{equation}
        \nonumber
            d_{\beta_1}(U,\widetilde{U}) = l_{\beta_1}(\gamma_\beta) = \|\m{\Delta}\|_{\beta_1} = \|\m{\Delta}\|_{\beta_2} =  l_{\beta_2}(\gamma_\beta)=d_{\beta_2}(U,\widetilde{U}).
        \end{equation}
        This concludes the proof.
\end{proof}

%Notice that although Theorem~\cref{thm:tightness} is sufficient to ensure the optimality of the Lipschitz constants, it is weak in the sense that ``take $\m{A,B}$ small enough'' is not further specified. This is because there is no precise characterization of the cut point on the Stiefel manifold.
The proof of~\cref{thm:tightness} shows that the bounds are attained in a neighborhood of the origin in the $(d_{\beta_1},d_{\beta_2})$-plane. In the absence of a lower bound on the cut time of the geodesic $\gamma$ considered in the proof, the size of the neighborhood is unknown.
\section{Bounds on the {$\beta$}-distance by the Frobenius distance: overview}\label{sec:introbounds} 
We have shown that all the $\beta$-distances are bilipschitz equivalent. Nonetheless, it remains that computing any such distance is a computational challenge~\cite{mataigne2024efficient, ZimmermannRalf22}. Therefore, we wish to characterize the $\beta$-distance in terms of a much easier quantity: the Frobenius distance. Again, we start by showing the results of a numerical experiment. We generate random samples on $\mathrm{St}(n,p)$ and we estimate both the Euclidean and the canonical geodesic distance, respectively $d_\mathrm{E}$ ($\beta = 1$) and $d_\mathrm{c}$ ($\beta = \frac{1}{2}$) using~\cite[Alg.~1~and~4]{ZimmermannRalf22}. 
%Notice that these algorithms can provide \emph{overestimations} of the true geodesic distance, as they return the length of some geodesic that is not guaranteed to be minimal. 
The estimates are shown by the dots in~\cref{fig:shootingfig}. The latter suggest a structured relation between the geodesic distance and the Frobenius distance. In particular, for all $\delta\in[0,2\sqrt{p}]$, we wish to describe analytically
\begin{align}
\label{eq:true_lower_bound}
m_\beta(\delta)&:=\min_{U,\widetilde{U}\in\mathrm{St}(n,p)}\{d_\beta(U,\widetilde{U})\ | \ \|\widetilde{U}-U\|_\mathrm{F} =  \delta\}\\
\label{eq:true_upper_bound}
\text{ and }  M_\beta(\delta)&:=\max_{U,\widetilde{U}\in\mathrm{St}(n,p)}\{d_\beta(U,\widetilde{U})\ | \ \|\widetilde{U}-U\|_\mathrm{F} =  \delta\}.
\end{align} 
We justify in~\cref{lem:frobenius_diameter} that it is enough to consider $\delta\in[0,2\sqrt{p}]$.
\begin{lem}\label{lem:frobenius_diameter}
	Let $n>p$. For all $U,\widetilde{U}\in\mathrm{St}(n,p)$, we have $\|U-\widetilde{U}\|_\mathrm{F}\in[0, 2\sqrt{p}]$ and for all $\delta\in[0,2\sqrt{p}]$, there is $U,\widetilde{U}$ such that $\delta=\|U-\widetilde{U}\|_\mathrm{F}$.
\end{lem}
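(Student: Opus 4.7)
The plan is to first establish the upper bound $2\sqrt{p}$ by a direct computation, and then show every intermediate value is attained via a connectivity argument.

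For the upper bound, I would expand
\begin{equation*}
\|U-\widetilde{U}\|_{\mathrm F}^2 = \Tr\bigl((U-\widetilde{U})^T(U-\widetilde{U})\bigr) = \Tr(U^TU) - 2\Tr(U^T\widetilde{U}) + \Tr(\widetilde{U}^T\widetilde{U}) = 2p - 2\Tr(U^T\widetilde{U}),
\end{equation*}
using $U^TU = \widetilde{U}^T\widetilde{U} = \m{I}_p$. It then suffices to show $\Tr(U^T\widetilde{U}) \in [-p,p]$. This follows from Cauchy--Schwarz applied to each diagonal entry: $|(U^T\widetilde{U})_{ii}| = |\langle U_{:,i},\widetilde{U}_{:,i}\rangle| \le \|U_{:,i}\|\,\|\widetilde{U}_{:,i}\| = 1$, so $|\Tr(U^T\widetilde{U})| \le p$. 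The two extreme values $\delta=0$ and $\delta=2\sqrt{p}$ are attained by $\widetilde{U}=U$ and $\widetilde{U}=-U$ respectively.

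For the existence of every intermediate value, I would invoke the path-connectedness of $\mathrm{St}(n,p)$ when $n>p$. This is the only place the hypothesis $n>p$ is used: the group $\mathrm{SO}(n)$ acts transitively on $\mathrm{St}(n,p)$ when $n>p$ (one can always flip the sign of a column of an orthogonal completion to adjust determinants), so $\mathrm{St}(n,p)$ inherits path-connectedness from $\mathrm{SO}(n)$. Fix any $U \in \mathrm{St}(n,p)$; then there exists a continuous curve $\gamma:[0,1]\to \mathrm{St}(n,p)$ with $\gamma(0)=U$ and $\gamma(1)=-U$. The map $t \mapsto f(t) := \|U-\gamma(t)\|_{\mathrm F}$ is continuous with $f(0)=0$ and $f(1) = 2\sqrt{p}$, so by the intermediate value theorem, for every $\delta \in [0,2\sqrt{p}]$ there is $t^* \in [0,1]$ such that $f(t^*)=\delta$; then the pair $(U, \gamma(t^*))$ realises the desired Frobenius distance.

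There is no real obstacle here — both parts are essentially one-line arguments once the correct decomposition and the correct connectivity fact are invoked. The only subtle point is justifying the condition $n>p$: this is exactly what ensures $\mathrm{St}(n,p)$ is connected (which fails, e.g., for $\mathrm{St}(n,n)=\mathrm O(n)$ in odd dimension, where $U$ and $-U$ lie in different components). For readers who prefer an explicit construction over the topological argument, one can instead exhibit a one-parameter family in closed form: taking $U=\m{I}_{n\times p}$ and rotating a column into the $(p{+}1)$st coordinate axis yields a continuous family covering $[0,2]$, and by concatenating such rotations over successive columns (using that $n>p$ gives at least one free coordinate to rotate into, reused across columns), one can continuously sweep $\delta$ up to $2\sqrt{p}$; but the IVT argument is cleaner and is what I would use.
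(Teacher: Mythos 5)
Your proof is correct and follows essentially the same route as the paper: the attainment of every value in $[0,2\sqrt{p}]$ is handled identically via path-connectedness of $\mathrm{St}(n,p)$ for $n>p$ and the intermediate value theorem applied to $t\mapsto\|U-\gamma(t)\|_\mathrm{F}$. The only cosmetic difference is in the upper bound, where you expand $\|U-\widetilde{U}\|_\mathrm{F}^2 = 2p - 2\Tr(U^T\widetilde{U})$ and invoke Cauchy--Schwarz, whereas the paper simply applies the triangle inequality to $\|U\|_\mathrm{F}=\|\widetilde{U}\|_\mathrm{F}=\sqrt{p}$; both are one-liners and equivalent in substance.
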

\begin{proof}
Since $\|U\|_\mathrm{F}=\|\widetilde{U}\|_\mathrm{F}=\sqrt{p}$, we have $\|U-\widetilde{U}\|_\mathrm{F}\leq  \|U\|_\mathrm{F}+\|\widetilde{U}\|_\mathrm{F}=2\sqrt{p}$. The inequality is attained for $\widetilde{U}=-U$. Moreover, $\mathrm{St}(n,p)$ is connected if $n>p$. Hence, $U$ and $-U$ are connected by a curve $\gamma:[0,1]\rightarrow\mathrm{St}(n,p)$. By the intermediate value theorem, $[0,1]\ni t \mapsto \|U-\gamma(t)\|_\mathrm{F}$ achieves every value in $[0,2\sqrt{p}]$.
%$\|U-\widetilde{U}\|_\mathrm{F}^2 = \|U\|_\mathrm{F}^2+\|\widetilde{U}\|_\mathrm{F}^2 -2\langle U,\widetilde{U}\rangle_\mathrm{F}\leq 2p + 2 \|U\|_\mathrm{F}\|\widetilde{U}\|_\mathrm{F}=4p$, where we leveraged Cauchy-Schwarz inequality.
\end{proof}
In~\cref{sec:lowerbound}, we obtain a lower bound $\widehat{m}_\beta$ such that $\widehat{m}_\beta(\delta)\leq m_\beta(\delta)$ for all $\delta\in[0,2\sqrt{p}]$. In~\cref{sec:attaining_lower_bound}, we discuss when $\widehat{m}_\beta(\delta)= m_\beta(\delta)$ for all $\delta\in[0,2\sqrt{p}]$, $\widehat{m}_\beta = m_\beta$ in short, and we give pairs of matrices $U,\widetilde{U}$ achieving~\eqref{eq:true_lower_bound}. For the upper bound $M_\beta$, we start by considering $\beta=1$ (Euclidean metric) in~\cref{sec:general_upper_bound}. We obtain an analytic expression for $\widehat{M}_\mathrm{E}$ such that $\widehat{M}_\mathrm{E}(\delta)\geq M_\mathrm{E}(\delta)$ for all $\delta\in[0,2\sqrt{p}]$. Similarly, we discuss in~\cref{sec:reachingupperbound} when $\widehat{M}_\mathrm{E} = M_\mathrm{E}$ and we give the pairs of matrices $U,\widetilde{U}$ achieving~\eqref{eq:true_upper_bound}. In~\cref{sec:generalization_to_beta}, we leverage the bilipschitz equivalence of the $\beta$-distances to obtain $\widehat{M}_\beta$ for all $\beta>0$.
\begin{figure}
    \centering
    \includegraphics[width = 6.4cm]{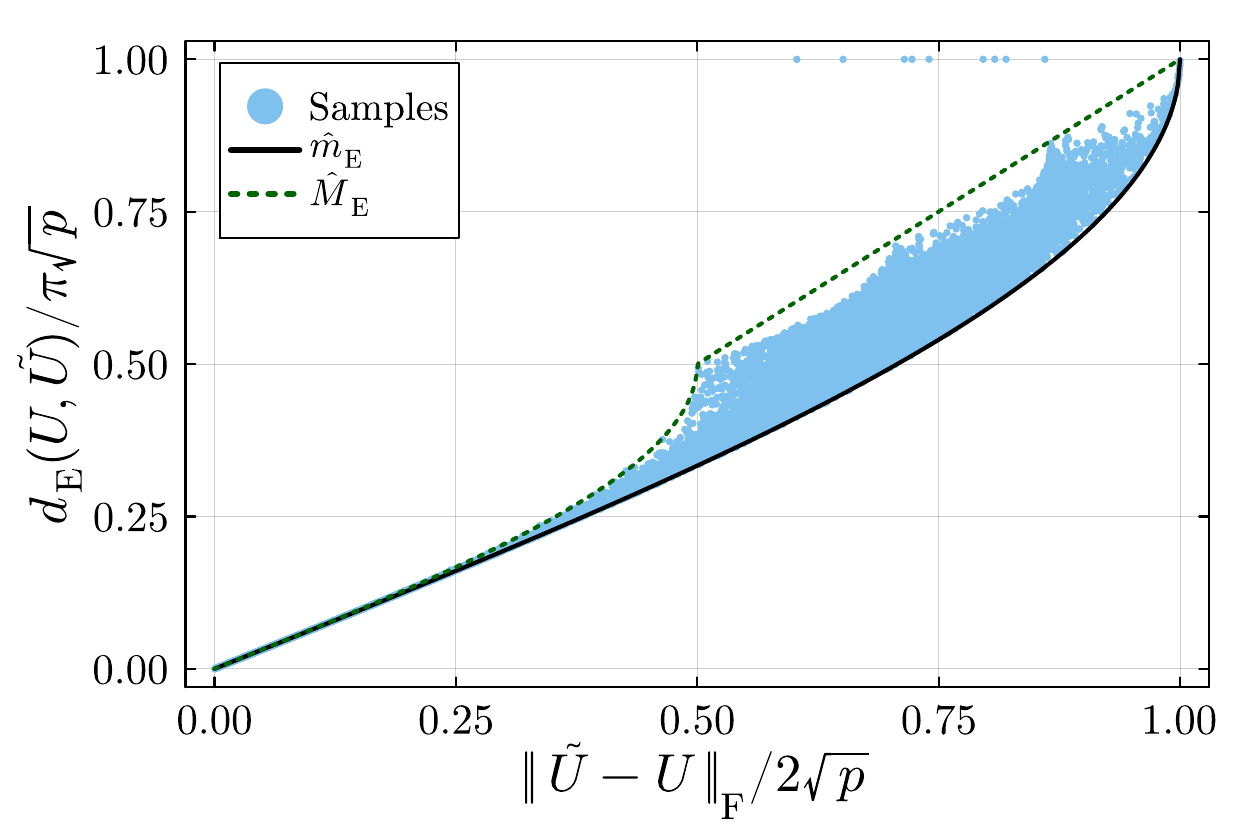}
    \includegraphics[width = 6.4cm]{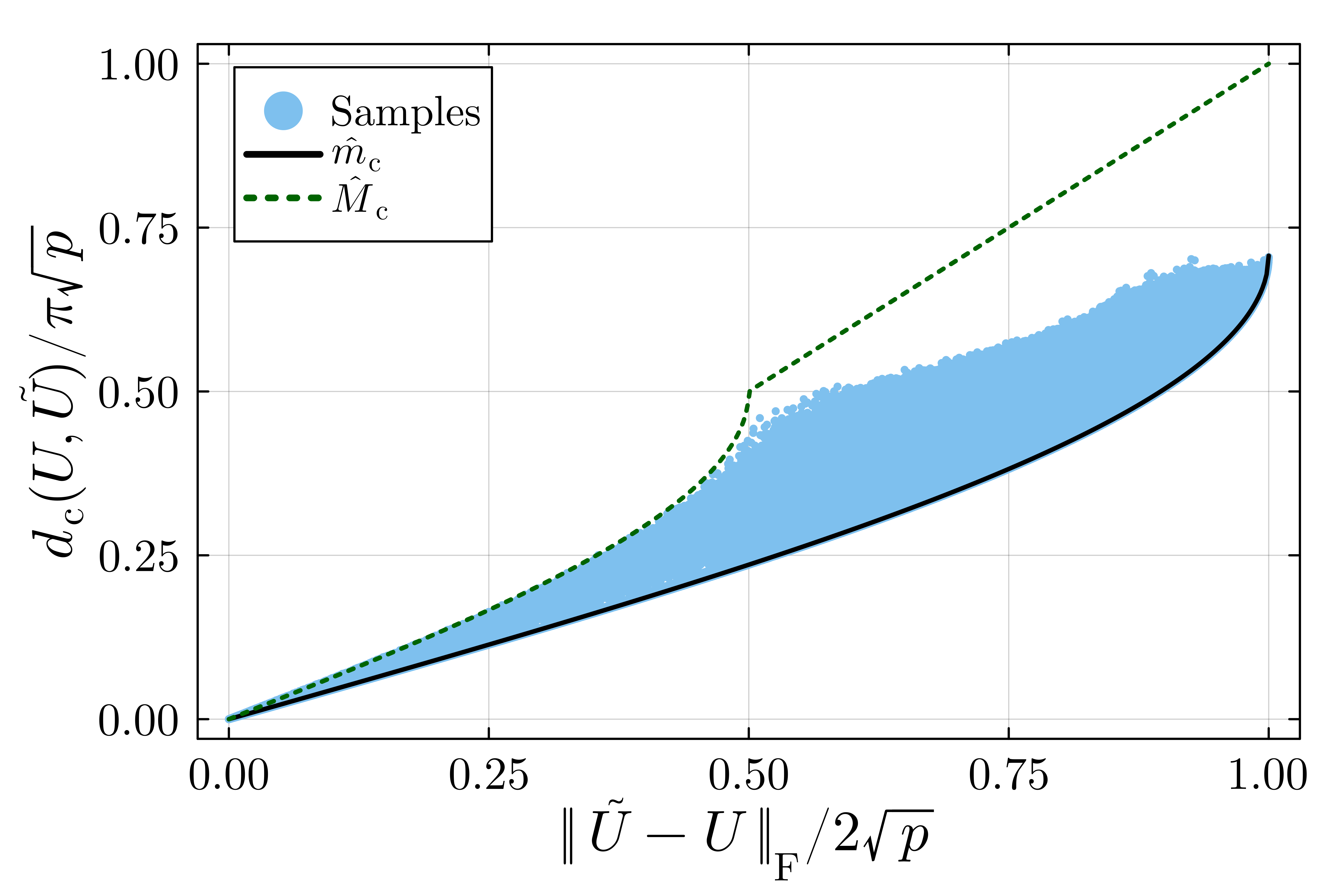}
    \caption{Bounds between the Frobenius distance and, on the left, the Euclidean ($\beta=1$) geodesic distance on $\mathrm{St}(5,4)$ and on the right, the canonical ($\beta=\frac{1}{2}$) geodesic distance on $\mathrm{St}(8,4)$: $\widehat{m}_\beta(\delta)\leq d_{\beta}(U,\widetilde{U})\leq \widehat{M}_\beta(\delta)$ for all $U,\widetilde{U}\in \mathrm{St}(n,p)$ with $\|\widetilde{U}-U\|_\mathrm{F} =  \delta$ (see \cref{thm:generalizedbounds}). The plots show the Frobenius distances and the geodesic distances $d_\mathrm{\beta}$ estimated for 20 000, respectively 1 000 000, randomly generated pairs $(U,\widetilde{U})$ on $\mathrm{St}(5,4)$, respectively $\mathrm{St}(8,4)$.}
    \label{fig:shootingfig}
\end{figure}
%The lower bound (solid black line on Figure~\cref{fig:shootingfig}) is obtained in Section~\cref{sec:lowerbound}, as well as pairs $(U,\widetilde{U})$ where the bound is attained. The upper bound (dashed green line on Figure~\cref{fig:shootingfig}) is obtained in Section~\cref{sec:upperbound} for the Euclidean metric ($\beta=1$), and generalized for the other metrics in Section~\cref{sec:frobeniusgeodesicequivalence}. The diameter of the Stiefel manifold endowed with the Euclidean metric is deduced from the previous results in Section~\cref{sec:diameter} and pairs $(U,\widetilde{U})$ reaching the upper bound are obtained in Section~\cref{sec:reachingupperbound}.

\section{A lower bound on the $\beta$-distance by the Frobenius distance}\label{sec:general_lower_bound}
Given $U\in \mathrm{St}(n,p)$, define $\vec(U)\in\mathbb{R}^{np}$ as the vector obtained by stacking the columns of $U$ one under the other such that $U_{i,j}=\vec(U)_{(j-1)n+i}$ for $i,j\geq 1$. Let $\vec(\mathrm{St}(n,p))$ denote the set of vectors obtained by applying this operation on all the matrices of $\mathrm{St}(n,p)$. We also introduce the notation $\mathbb{S}_r^k$ for the hypersphere of dimension $k$, of radius $r$, centered at the origin. For all $U\in \mathrm{St}(n,p)$, since $\|U\|_{\mathrm{F}} = \|\vec(U)\|_2=\sqrt{p}$, it holds that $\vec(\mathrm{St}(n,p))\subseteq \mathbb{S}_{\sqrt{p}}^{np-1}$. 
\subsection{The lower bound on the {$\beta$}-distance}\label{sec:lowerbound}
We recall a useful property from trigonometry to compute the length of an arc on the hypersphere given the chord length. The property is obtained by working in the triangle $\m{\Delta}$ in~\cref{fig:trigo_picture}.

\begin{prop}\label{prop:vecdist}
    \it
    For all $\m{v}_1,\m{v}_2\in \mathbb{S}_r^k$, the Euclidean geodesic distance (arc-length) is given by $d_{\mathrm{E}}(\m{v}_1,\m{v}_2) = 2r\arcsin\left(\frac{\|\m{v}_1-\m{v}_2\|_2}{2r}\right)$.
\end{prop}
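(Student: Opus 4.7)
The plan is to reduce the arc-length on $\mathbb{S}_r^k$ to a one-dimensional trigonometric identity, exploiting the fact that a great circle through $\mathbf{v}_1$ and $\mathbf{v}_2$ is isometric to a circle of radius $r$ in $\mathbb{R}^2$. First, I would recall (or briefly justify) the standard fact that the geodesic distance between two points on a Euclidean sphere of radius $r$ equals $r\theta$, where $\theta\in[0,\pi]$ is the angle at the origin between the position vectors $\mathbf{v}_1$ and $\mathbf{v}_2$. This follows because the minimal geodesic is an arc of the great circle determined by $\mathbf{v}_1$, $\mathbf{v}_2$, and the origin, which has radius $r$.

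Next, I would relate the chord length $\|\mathbf{v}_1-\mathbf{v}_2\|_2$ to $\theta$ using the isoceles triangle with vertices $\mathbf{0},\mathbf{v}_1,\mathbf{v}_2$ pictured in the referenced figure. Either via the law of cosines
\[
\|\mathbf{v}_1-\mathbf{v}_2\|_2^2 = 2r^2 - 2r^2\cos\theta = 4r^2\sin^2(\theta/2),
\]
or equivalently by dropping the perpendicular from $\mathbf{0}$ onto the chord to split the isoceles triangle into two right triangles of hypotenuse $r$ and opposite side $\tfrac{1}{2}\|\mathbf{v}_1-\mathbf{v}_2\|_2$, I obtain
\[
\sin(\theta/2) = \frac{\|\mathbf{v}_1-\mathbf{v}_2\|_2}{2r}.
\]

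Finally, since $\theta/2\in[0,\pi/2]$, the map $\sin$ is invertible on this interval with inverse $\arcsin$. Solving for $\theta$ gives $\theta = 2\arcsin\!\bigl(\|\mathbf{v}_1-\mathbf{v}_2\|_2/(2r)\bigr)$, and multiplying by $r$ yields the claimed formula $d_{\mathrm{E}}(\mathbf{v}_1,\mathbf{v}_2)=2r\arcsin\!\bigl(\|\mathbf{v}_1-\mathbf{v}_2\|_2/(2r)\bigr)$. There is no real obstacle here; the only care needed is to note that $\theta\in[0,\pi]$ so $\theta/2$ lies in the principal branch of $\arcsin$, and that the argument of $\arcsin$ always lies in $[0,1]$ because $\|\mathbf{v}_1-\mathbf{v}_2\|_2\le 2r$ on the sphere of radius $r$.
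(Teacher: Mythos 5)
Your argument is correct and matches the paper's approach: the paper also derives the formula from the isosceles triangle $\mathbf{0},\mathbf{v}_1,\mathbf{v}_2$ (the triangle $\Delta$ in its figure), reading off $\sin(\theta/2)=\|\mathbf{v}_1-\mathbf{v}_2\|_2/(2r)$ from the right triangle obtained by bisecting it, exactly as in your second derivation. The law-of-cosines route you mention first is a harmless equivalent rephrasing of the same computation.
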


\begin{figure}
    \centering
    \includegraphics[width = 7cm]{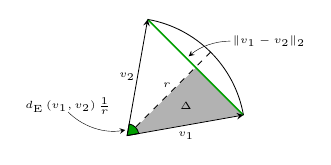}
    \caption{\cref{prop:vecdist} is obtained in the triangle $\m{\Delta}$. It can be deduced from the definition of the sine of the angle in the bottom left corner, which is expressed as the ratio of the length of the side opposite to that angle to the length of the hypotenuse.}
    \label{fig:trigo_picture}
\end{figure}

The next lemma establishes the equivalence of the Euclidean lengths in $\mathrm{St}(n,p)$ and $\vec(\mathrm{St}(n,p))$. This comes from the fact that $\vec:\mathbb{R}^{n\times p}\mapsto\mathbb{R}^{np}$ is an isomorphism when the Euclidean metric is considered in the ambient space $\mathbb{R}^{n\times p}$.

\begin{lem}\label{lem:equivalentlength}
    \it
    The length of a continuously differentiable curve $\gamma:[0,1]\rightarrow \mathrm{St}(n,p)$ under the Euclidean metric, written $l_{\mathrm{E}}(\gamma)$, is equal to the Euclidean length of $\vec(\gamma)$ in $\mathbb{S}_{\sqrt{p}}^{np-1}$, written $l_{\mathrm{E}}(\vec(\gamma))$.
\end{lem}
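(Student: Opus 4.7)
The plan is to reduce everything to the fact that the vectorization map $\vec:\mathbb{R}^{n\times p}\to\mathbb{R}^{np}$ is a linear isometry when $\mathbb{R}^{n\times p}$ is equipped with the Frobenius inner product and $\mathbb{R}^{np}$ with the standard Euclidean inner product. This is immediate from the definitions, since $\langle A,B\rangle_\mathrm{F}=\Tr(A^TB)=\sum_{i,j}A_{ij}B_{ij}=\langle\vec(A),\vec(B)\rangle_2$; in particular $\|A\|_\mathrm{F}=\|\vec(A)\|_2$ for every $A\in\mathbb{R}^{n\times p}$.

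First, I would recall that the Euclidean metric on $\mathrm{St}(n,p)$ is by definition the restriction of the Frobenius metric on the ambient space, so that
\begin{equation*}
l_\mathrm{E}(\gamma)=\int_0^1\|\dot\gamma(t)\|_\mathrm{F}\,\mathrm{d}t.
\end{equation*}
Likewise, $\vec(\mathrm{St}(n,p))\subseteq\mathbb{S}_{\sqrt p}^{np-1}$ inherits its Riemannian structure from the ambient standard inner product on $\mathbb{R}^{np}$, so that the Euclidean length of $\vec(\gamma)$ on the sphere is just
\begin{equation*}
l_\mathrm{E}(\vec(\gamma))=\int_0^1\bigl\|\tfrac{\mathrm{d}}{\mathrm{d}t}\vec(\gamma(t))\bigr\|_2\,\mathrm{d}t.
\end{equation*}

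Next, since $\vec$ is a fixed linear map (independent of $t$), it commutes with differentiation: $\tfrac{\mathrm{d}}{\mathrm{d}t}\vec(\gamma(t))=\vec(\dot\gamma(t))$. Combining this with the isometry identity $\|\vec(\dot\gamma(t))\|_2=\|\dot\gamma(t)\|_\mathrm{F}$ gives the two integrands coincide pointwise in $t$, hence the two integrals coincide. I do not foresee a genuine obstacle here; the only minor care needed is to verify that the derivative of $\vec\circ\gamma$ in the ambient $\mathbb{R}^{np}$ actually equals $\vec(\dot\gamma(t))$ in the sense required to compute the sphere-length, which is just the chain rule applied to a bounded linear operator.
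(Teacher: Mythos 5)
Your argument is correct and is essentially the paper's own one-line proof, namely that $\vec$ is a linear isometry from $(\mathbb{R}^{n\times p},\langle\cdot,\cdot\rangle_\mathrm{F})$ to $(\mathbb{R}^{np},\langle\cdot,\cdot\rangle_2)$ commuting with differentiation, so the two integrands agree pointwise. You merely spell out the chain-rule and isometry steps that the paper leaves implicit.
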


\begin{proof}
$l_{\mathrm{E}}(\gamma)=\int_0^1\|\Dot{\gamma}(t)\|_{\mathrm{E}}\ \mathrm{d}t =   \int_0^1\|\vec(\Dot{\gamma}(t))\|_2\ \mathrm{d}t = l_{\mathrm{E}}(\vec(\gamma)).$
\end{proof}

\cref{thm:Euclideanlowerbound} is a direct consequence of the fact that $\mathrm{vec}\left(\mathrm{St}(n,p)\right)$ is a subset of $\mathbb{S}_{\sqrt{p}}^{np-1}$: the Euclidean shortest path between any two points in $\mathrm{vec}\left(\mathrm{St}(n,p)\right)$ cannot be shorter than the shortest path in $\mathbb{S}_{\sqrt{p}}^{np-1}$, the last one being the well known arc-length between the two points on the sphere.

\begin{thm}\label{thm:Euclideanlowerbound}
    \it
    For all $U,\widetilde{U}\in \mathrm{St}(n,p)$, we have $d_{\mathrm{E}}(U,\widetilde{U})\geq 2\sqrt{p}\arcsin\left(\frac{\|U-\widetilde{U}\|_{\mathrm{F}}}{2\sqrt{p}}\right)$.
\end{thm}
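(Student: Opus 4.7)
The plan is to use the isometric embedding $\mathrm{vec}:\mathrm{St}(n,p)\to\mathbb{S}_{\sqrt{p}}^{np-1}$ that has been set up just before the statement, and reduce the lower bound to the well-known arc-length formula of \cref{prop:vecdist}. The inequality to be proved is simply the observation that the Euclidean geodesic distance in $\mathrm{St}(n,p)$, being an infimum over curves lying in a subset of the sphere, cannot be smaller than the corresponding infimum over all curves on the sphere.

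Concretely, I would start by picking an arbitrary continuously differentiable curve $\gamma:[0,1]\to\mathrm{St}(n,p)$ with $\gamma(0)=U$ and $\gamma(1)=\widetilde{U}$. By \cref{lem:equivalentlength}, $l_{\mathrm{E}}(\gamma)=l_{\mathrm{E}}(\mathrm{vec}(\gamma))$, where $\mathrm{vec}(\gamma)$ is now a curve on the hypersphere $\mathbb{S}_{\sqrt{p}}^{np-1}$ joining $\mathrm{vec}(U)$ to $\mathrm{vec}(\widetilde{U})$. By the very definition of the geodesic distance on the sphere, this length is bounded below by $d_{\mathrm{E}}^{\mathbb{S}}\!\left(\mathrm{vec}(U),\mathrm{vec}(\widetilde{U})\right)$, the sphere's intrinsic distance.

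Next, \cref{prop:vecdist} gives the closed-form arc-length
\[
d_{\mathrm{E}}^{\mathbb{S}}\!\left(\mathrm{vec}(U),\mathrm{vec}(\widetilde{U})\right)
= 2\sqrt{p}\arcsin\!\left(\frac{\|\mathrm{vec}(U)-\mathrm{vec}(\widetilde{U})\|_2}{2\sqrt{p}}\right)
= 2\sqrt{p}\arcsin\!\left(\frac{\|U-\widetilde{U}\|_{\mathrm{F}}}{2\sqrt{p}}\right),
\]
using that $\mathrm{vec}$ is an isometry between $(\mathbb{R}^{n\times p},\|\cdot\|_{\mathrm{F}})$ and $(\mathbb{R}^{np},\|\cdot\|_2)$. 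Taking the infimum over all admissible curves $\gamma$ in $\mathrm{St}(n,p)$ yields $d_{\mathrm{E}}(U,\widetilde{U})\geq 2\sqrt{p}\arcsin\!\left(\|U-\widetilde{U}\|_{\mathrm{F}}/(2\sqrt{p})\right)$, as claimed.

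There is essentially no obstacle in this argument; the only subtlety worth noting is that one does not need the minimizing curve on the sphere to lie in $\mathrm{vec}(\mathrm{St}(n,p))$, because we are only using a lower bound on curve lengths, which is valid for \emph{any} curve in the larger ambient sphere. In particular, equality in the bound cannot be expected in general, since great-circle arcs on $\mathbb{S}_{\sqrt{p}}^{np-1}$ typically leave the submanifold $\mathrm{vec}(\mathrm{St}(n,p))$; the question of when equality holds is exactly the subject of \cref{sec:attaining_lower_bound}.
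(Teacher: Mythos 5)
Your argument is correct and is essentially the same as the paper's: it reduces the claim to the arc-length formula of \cref{prop:vecdist} via \cref{lem:equivalentlength} and the observation that $\vec(\mathrm{St}(n,p))$ is contained in $\mathbb{S}_{\sqrt{p}}^{np-1}$, so the infimum of lengths over curves in the submanifold dominates the sphere's intrinsic distance. The only cosmetic difference is that you fix one curve and then take an infimum, whereas the paper phrases it directly as a chain of minima.
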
    

\begin{proof}
Using~\cref{prop:vecdist}, we have $d_{\mathrm{E}}(\vec(U),\vec(\widetilde{U}))=2\sqrt{p}\arcsin\left(\frac{\|U-\widetilde{U}\|_{\mathrm{F}}}{2\sqrt{p}}\right)$ since $\|\vec(U)-\vec(\widetilde{U})\|_2 = \|U-\widetilde{U}\|_{\mathrm{F}}$. Moreover, we have
\begin{align*}
    d_{\mathrm{E}}\left(\vec(U),\vec(\widetilde{U})\right) &= \min \left\{l_{\mathrm{E}}(\vec(\gamma)) \ |\ \vec(\gamma(t))\in \mathbb{S}_{\sqrt{p}}^{np},\ \gamma(0)=U,\ \gamma(1)=\widetilde{U}\right\}\\
    &\leq \min \left\{l_{\mathrm{E}}(\vec(\gamma)) \ |\ \gamma(t)\in \mathrm{St}(n,p),\gamma(0)=U, \gamma(1)=\widetilde{U}\right\}\\
    &= \min \left\{l_{\mathrm{E}}(\gamma)\ | \ \gamma(t)\in \mathrm{St}(n,p),\gamma(0)=U,\gamma(1) = \widetilde{U}\right\} \text{(\cref{lem:equivalentlength})}\\
    &=d_{\mathrm{E}}(U,\widetilde{U}).
\end{align*}
\end{proof}

Using the equivalence of the $\beta$-distances, we can generalize~\cref{thm:Euclideanlowerbound} to~\cref{cor:betalowerbound}.

\begin{cor}\label{cor:betalowerbound}
    \it
    For all $U,\widetilde{U}\in \mathrm{St}(n,p)$, we have $$d_{\beta}(U,\widetilde{U})\geq \min\{1,\sqrt{\beta}\}2\sqrt{p}\arcsin\left(\frac{\|U-\widetilde{U}\|_{\mathrm{F}}}{2\sqrt{p}}\right).$$
\end{cor}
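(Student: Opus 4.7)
The plan is to reduce this corollary to \cref{thm:Euclideanlowerbound} via the bilipschitz equivalence of $\beta$-distances established in \cref{cor:allbounds}. The idea is that the Euclidean case ($\beta=1$) has already been handled, so I can import its lower bound and then ``pay'' the Lipschitz cost of switching from $d_\mathrm{E}=d_1$ to $d_\beta$.

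Concretely, I would apply \cref{cor:allbounds} with $\beta_1 = 1$ and $\beta_2 = \beta$. This gives
\begin{equation*}
d_{1}(U,\widetilde{U})\;\leq\;\max\!\left\{1,\sqrt{\tfrac{1}{\beta}}\right\}\, d_\beta(U,\widetilde{U}),
\end{equation*}
or equivalently
\begin{equation*}
d_\beta(U,\widetilde{U})\;\geq\;\min\!\left\{1,\sqrt{\beta}\right\}\, d_{\mathrm{E}}(U,\widetilde{U}).
\end{equation*}
Chaining this with the bound $d_{\mathrm{E}}(U,\widetilde{U})\geq 2\sqrt{p}\arcsin\!\left(\|U-\widetilde{U}\|_{\mathrm{F}}/(2\sqrt{p})\right)$ from \cref{thm:Euclideanlowerbound} yields the claimed inequality immediately.

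Since the non-negative scalar $2\sqrt{p}\arcsin(\|U-\widetilde{U}\|_{\mathrm{F}}/(2\sqrt{p}))$ can be multiplied on both sides by the non-negative constant $\min\{1,\sqrt{\beta}\}$ without flipping the inequality, no case split beyond the one already encoded in the ``$\min$'' is required. There is essentially no obstacle here: the nontrivial geometric work has already been done in \cref{thm:Euclideanlowerbound} (embedding $\mathrm{St}(n,p)$ into the sphere via $\vec$ and using arc-length), and in \cref{thm:equivalence} (comparing the pointwise norms of a tangent vector under two different $\beta$-metrics). The corollary is the book-keeping step combining the two.
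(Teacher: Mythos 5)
Your proof is correct and follows the same route as the paper's own one-line proof, which simply combines \cref{cor:allbounds} with \cref{thm:Euclideanlowerbound}. The algebra $d_\beta(U,\widetilde{U})\geq \min\{1,\sqrt{\beta}\}\,d_{\mathrm{E}}(U,\widetilde{U})$ indeed follows from \cref{cor:allbounds} (you could also read it off directly by taking $\beta_1=\beta$, $\beta_2=1$ and using the left inequality), and chaining with the Euclidean lower bound finishes the argument exactly as intended.
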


\begin{proof}
    Combine~\cref{cor:allbounds} on the bilipschitz equivalence and~\cref{thm:Euclideanlowerbound}.
\end{proof} 

The lower bound of~\cref{cor:betalowerbound} is the lower bound that was drawn by the solid black line on~\cref{fig:shootingfig}. In view of this result, we define
\begin{equation}
\label{eq:m_hat_beta}
\widehat{m}_\beta(\delta):[0,2\sqrt{p}]\rightarrow\mathbb{R}:\delta\mapsto\min\{1,\sqrt{\beta}\}2\sqrt{p}\arcsin\left(\frac{\delta}{2\sqrt{p}}\right).
\end{equation}
\subsection{Attaining the lower bound on the $\beta$-distance}\label{sec:attaining_lower_bound}
To push further this analysis, we find when the bound of~\cref{cor:betalowerbound} is attained, i.e., we find the pairs $n,p$ such that $\widehat{m}_\beta=m_\beta$. \cref{fig:tree} presents a tree summarizing the conditions to decide if $\widehat{m}_\beta = m_\beta$. The sudden change of condition when $\beta$ continuously varies from $\beta<1$ to $\beta>1$ could be surprising. As detailed next, this is due to a change of the family of curves reaching $m_\beta$ at $\beta=1$.
 \begin{figure}[h]
 \centering
 \includegraphics[width = 7cm]{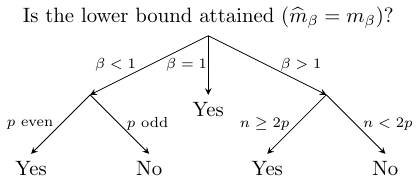}
 \caption{The tree summarizes the conditions to decide whether the lower bound from~\cref{cor:betalowerbound} is attained or not. The conditions change at the critical value $\beta=1$.}
 \label{fig:tree}
 \end{figure}
 
\textbf{Case 1: $p$ even and $\beta\in(0,1]$.} Consider the $2$-by-$2$ rotation matrix $\m{B}_\theta := \left[\begin{smallmatrix}
\cos(\theta)& \sin(\theta)\\
-\sin(\theta)&\cos(\theta)\\
\end{smallmatrix}\right]$ and the block diagonal matrix $\m{G}_p(\theta)$ where
\begin{equation}\label{eq:G_theta}
    \m{G}_p(\theta) :=\begin{bmatrix}
    \m{B}_\theta&0&0&...&0&0\\
    0 & \m{B}_\theta &0&...&0&0\\
    ...&...&...&...&...&...\\
    0&0&0&...&0&\m{B}_\theta\\
    \end{bmatrix}\in \mathrm{SO}(p) \text{ for all } \theta\in \mathbb{R}.
\end{equation}
For $p$ even and $\beta \in (0,1]$,~\cref{thm:lowerboundreached} shows that matrices of the form $\m{UG}_p(\theta)$ attain the lower bound on the geodesic distance to $U$, and thus, $\widehat{m}_\beta=m_\beta$.% Since the minimizing geodesic going from $U$ to $UG_{p}(\theta)$ do not depend on $\beta$ and because it achieves the distance on the hypersphere $\mathbb{S}^{np-1}_{\sqrt{p}}$, such geodesics on $\mathrm{St}(n,p)$ also draw geodesics on $\mathbb{S}^{np-1}_{\sqrt{p}}$.

\begin{thm}\label{thm:lowerboundreached}
    \it
    Let $p$ be even and $\beta\in (0,1]$. For all $\delta\in[0,2\sqrt{p}]$, there is $\theta \in \mathbb{R}$ such that for all $U\in \mathrm{St}(n,p)$ and all $Q\in\mathrm{O}(p)$,  $d_\beta(U,UQG_{p}(\theta) Q^T)=2\sqrt{\beta p}\arcsin\left(\frac{\delta}{2\sqrt{p}}\right)$ and $\|U-UQG_{p}(\theta) Q^T\|_\mathrm{F} =\delta$. Hence, $\widehat{m}_\beta=m_\beta$. %we have  $d_\beta(U,UG_{p}(\theta))=2\sqrt{\beta p}\arcsin\left(\frac{\|U-UQG_{p}(\theta) Q^T\|_{\mathrm{F}}}{2\sqrt{p}}\right)$. 
\end{thm}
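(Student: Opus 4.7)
The strategy is to exhibit a curve from $U$ to $UQG_p(\theta)Q^T$ that is simultaneously a $\beta$-geodesic and whose length coincides with the lower bound $\widehat m_\beta(\delta)$; minimality will then follow by pinching. Let $H\in\mathrm{Skew}(p)$ denote the block-diagonal matrix with $p/2$ diagonal blocks equal to $\theta\bigl[\begin{smallmatrix}0&1\\-1&0\end{smallmatrix}\bigr]$, so that $e^H=G_p(\theta)$ and $\|H\|_\mathrm{F}^2=p\theta^2$. Set $A:=QHQ^T\in\mathrm{Skew}(p)$ and consider the candidate curve $\gamma(t):=Ue^{tA}=UQG_p(t\theta)Q^T$ for $t\in[0,1]$.

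First, I would calibrate $\theta$ from the Frobenius-distance constraint. Using $\|UM\|_\mathrm{F}=\|M\|_\mathrm{F}$ for any $M\in\mathbb{R}^{p\times p}$ (since $U^TU=\m{I}_p$) together with invariance under orthogonal conjugation,
\begin{equation*}
    \|U-UQG_p(\theta)Q^T\|_\mathrm{F}^2=\|\m{I}_p-G_p(\theta)\|_\mathrm{F}^2=2p-2\Tr(G_p(\theta))=2p(1-\cos\theta),
\end{equation*}
because each $2\times 2$ block of $G_p(\theta)$ has trace $2\cos\theta$. Thus $\theta:=2\arcsin\!\bigl(\delta/(2\sqrt p)\bigr)\in[0,\pi]$ yields $\|U-UQG_p(\theta)Q^T\|_\mathrm{F}=2\sqrt{p}\,|\sin(\theta/2)|=\delta$, and this choice is independent of $U$ and $Q$.

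Next, I would verify that $\gamma$ is a $\beta$-geodesic and compute its length. The initial velocity $\m{\Delta}:=\dot\gamma(0)=UA\in T_U\mathrm{St}(n,p)$ has $A$-component $A$ and $B$-component zero. Plugging $B=0$ into \cref{thm:geodesics}, the middle matrix becomes block-diagonal, so its exponential is $\begin{bmatrix}e^{2\beta tA}&0\\0&\m{I}\end{bmatrix}$, and the identity $e^{2\beta tA}e^{(1-2\beta)tA}=e^{tA}$ (valid because the two matrices commute) gives $\mathrm{Exp}_{\beta,U}(t\m{\Delta})=Ue^{tA}=\gamma(t)$. Therefore $\gamma$ is a geodesic (in fact the same curve for every $\beta$), and since geodesics have constant speed,
\begin{equation*}
    l_\beta(\gamma)=\|\m{\Delta}\|_\beta=\sqrt{\beta\|A\|_\mathrm{F}^2}=\sqrt{\beta p}\,\theta=2\sqrt{\beta p}\,\arcsin\!\bigl(\delta/(2\sqrt p)\bigr),
\end{equation*}
where I used $\|A\|_\mathrm{F}=\|H\|_\mathrm{F}=\sqrt p\,\theta$.

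Finally, I would conclude minimality via \cref{cor:betalowerbound}. Since $\beta\in(0,1]$, $\min\{1,\sqrt\beta\}=\sqrt\beta$, so the corollary gives $d_\beta(U,UQG_p(\theta)Q^T)\geq 2\sqrt{\beta p}\arcsin(\delta/(2\sqrt p))=l_\beta(\gamma)$. Combined with the trivial inequality $d_\beta\leq l_\beta(\gamma)$, equality follows; hence $\gamma$ is minimal and $\widehat m_\beta(\delta)=m_\beta(\delta)$ for every $\delta\in[0,2\sqrt p]$. The only delicate point is the collapse of \cref{thm:geodesics} when $B=0$, relying on the commutativity of matrix exponentials of multiples of $A$; the remaining steps are Frobenius-norm bookkeeping and a direct invocation of the already proved lower bound.
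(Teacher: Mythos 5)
Your proof is correct and follows essentially the same path as the paper's: construct the curve $\gamma(t)=UQG_p(t\theta)Q^T$, verify it is a $\beta$-geodesic (and in fact $\beta$-independent) by plugging $B=0$ into the exponential formula of \cref{thm:geodesics}, compute $\|U-UQG_p(\theta)Q^T\|_\mathrm{F}^2=4p\sin^2(\theta/2)$ to tie $\theta$ to $\delta$, and then conclude minimality by pinching against \cref{cor:betalowerbound}. The only cosmetic difference is that you calibrate $\theta=2\arcsin(\delta/(2\sqrt p))$ explicitly up front, whereas the paper invokes the intermediate value theorem at the end to obtain $\theta$ from $\delta$; both are equally valid.
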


\begin{proof}
Consider $\theta\in [-\pi,\pi]$ since $\theta\mapsto G_{p}(\theta)$ has period $2\pi$, and $\m{\Delta} = UQ A_\theta Q^T$ where \begin{equation}\label{eq:Atheta}
    A_p(\theta):=\begin{bmatrix}
    \m{\Omega}_\theta&0&0&...&0&0\\
    0 & \m{\Omega}_\theta &0&...&0&0\\
    ...&...&...&...&...&...\\
    0&0&0&...&0&\m{\Omega}_\theta\\
    \end{bmatrix}\in \mathrm{Skew}(p)\text{ with } \m{\Omega}_\theta:=\begin{bmatrix}
    0&\theta\\
    -\theta&0
\end{bmatrix}. 
\end{equation}
For such $\Delta$,~\eqref{eq:paramgeogeneral} reduces to $\mathrm{Exp}_{\beta,U}(t\Delta) = UQ\exp\left(A_p(\theta t) \right)Q^T=UQG_p(\theta t)Q^T=:\gamma(t)$. This yields $\Dot{\gamma}(t) = UQ\exp\left(A_p(\theta t)\right)A_p(\theta)Q^T$ and $\|\Dot{\gamma}(t)\|_\beta^2 = \beta\|\m{A}_\theta\|_{\mathrm{F}}^2 = \beta p \theta^2$. It follows from~\eqref{eq:length} that $l_\beta(\gamma) = \sqrt{\beta p}|\theta|$. Moreover, we have
\begin{comment}
\begin{align*}
    %\|\widetilde{U}-U\|_{\mathrm{F}}^2 & = \|U(G_{p}(\theta)-\m{I})\|_{\mathrm{F}}^2\\
    \|\m{UQG}_\theta Q^T-U\|_{\mathrm{F}}^2&=\|G_{p}(\theta)-\m{I}\|_{\mathrm{F}}^2\\
    %&=\sum_{i=1,i+=2}^{p-1}\sum_{j=1}^n ((c-1)U_{j,i}-sU_{j,i+1})^2 +(sU_{j,i}+(c-1)U_{j,i+1})^2\\
    %&=\sum_{i=1,i+=2}^{p-1}\sum_{j=1}^n (c-1)^2U_{j,i}^2+s^2U_{j,i+1}^2 +s^2U_{j,i}^2+(c-1)^2U_{j,i+1}^2\\
    %&\hspace{5.35cm}\text{Since the columns are orthogonal}\\
    &=p [(\cos(\theta)-1)^2+\sin(\theta)^2]\\%\hspace{3.05cm}\text{Since the columns are normed}\\
    &= 2p(1-\cos(\theta))\\
    &=4p\sin^2\left(\frac{\theta}{2}\right).
 \end{align*} 
 \end{comment}
 \begin{equation*}
 	\|\m{UQG}_p(\theta) Q^T-U\|_{\mathrm{F}}^2=\|G_{p}(\theta)-\m{I}\|_{\mathrm{F}}^2=p [(\cos(\theta)-1)^2+\sin(\theta)^2]=4p\sin^2\left(\frac{\theta}{2}\right).
 \end{equation*}
 This yields $|\theta| = 2 \arcsin\left(\frac{\|U-U Q G_{p}(\theta) Q^T\|_{\mathrm{F}}}{2\sqrt{p}}\right)$ from which it follows that $l_\beta(\gamma) = 2\sqrt{\beta p}\arcsin\left(\frac{\|U-UQG_{p}(\theta) Q^T\|_{\mathrm{F}}}{2\sqrt{p}}\right)$. In view of~\cref{cor:betalowerbound}, $\gamma$ is thus a \emph{minimal} geodesic. Hence, we have
\begin{align*}
    d_\beta(U,\m{UQG}_p(\theta) Q^T) =l_\beta(\gamma) = 2\sqrt{\beta p}\arcsin\left(\frac{\|U-\m{UQG}_p(\theta) Q^T\|_{\mathrm{F}}}{2\sqrt{p}}\right).
\end{align*}
Using the intermediate value theorem on $[0,\pi]\ni\theta\mapsto\|U - \m{UQG}_p(\theta) Q^T\|_\mathrm{F}$ shows that $\|U - \m{UQG}_p(\theta) Q^T\|_\mathrm{F}$ attains every value in $[0,2\sqrt{p}]$. This concludes the proof.
\end{proof}

\textbf{Case 2: $n\geq 2p$ and $\beta\geq 1$.} Using a very similar reasoning to~\cref{thm:lowerboundreached}, we also prove that the lower bound is attained for $\beta\geq 1$ if $n\geq 2p$. Let $\theta\in\mathbb{R}$ and define $$K(\theta):=\begin{bmatrix}
\cos(\theta I_p)&\sin(\theta I_p)\\
-\sin(\theta I_p)&\cos(\theta I_p)
\end{bmatrix}=\exp\left(\begin{bmatrix}0&\theta I_p\\
-\theta I_p&0\end{bmatrix}\right)\in\mathrm{SO}(2p).$$
\begin{thm}\label{thm:lowerboundreachedEuclideanetal}
\it
 	Let $n\geq 2p$ and $\beta\geq 1$. For all $\delta\in[0,2\sqrt{p}]$, there is $\theta\in\mathbb{R}$ such that for all $U,\widehat{U}\in\mathrm{St}(n,p)$ with $\widehat{U}^TU=0$ and all $Q\in\mathrm{O}(2p)$, $d_\beta(U,[U\ \widehat{U}]QK(\theta)   Q^TI_{2p\times p})=2\sqrt{p}\arcsin\left(\frac{\delta}{2\sqrt{p}}\right)$ and $\|U - [U\ \widehat{U}]QK(\theta) Q^T I_{2p\times p}\|_\mathrm{F}=\delta$. Hence, $\widehat{m}_\beta=m_\beta$.
\end{thm}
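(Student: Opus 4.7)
\emph{Proof proposal.} The plan mirrors the proof of Theorem~\ref{thm:lowerboundreached}, but with the roles of the $A$ and $B$ tangent components swapped: we now work with an essentially horizontal initial velocity. The key algebraic input is that for $J := \left[\begin{smallmatrix}0 & I_p \\ -I_p & 0\end{smallmatrix}\right]$ (so $K(\theta) = \exp(\theta J)$), the matrix $\Omega := Q J Q^T \in \mathrm{Skew}(2p)$ inherits from $J^2 = -I_{2p}$ the identity $\Omega^2 = -I_{2p}$. Hence $\Omega^T\Omega = I_{2p}$ and $Q K(\theta) Q^T = \exp(\theta\Omega) = \cos(\theta) I_{2p} + \sin(\theta)\Omega$.

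I would introduce the candidate curve $\gamma(t) := [U\ \widehat U]\exp(t\theta\Omega) I_{2p\times p}$, which satisfies $\gamma(0) = U$, $\gamma(1) = [U\ \widehat U] Q K(\theta) Q^T I_{2p\times p}$, and remains on $\mathrm{St}(n,p)$ because $[U\ \widehat U]^T[U\ \widehat U] = I_{2p}$ and $\Omega^T\Omega = I_{2p}$. Writing $\Omega$ in $p\times p$ blocks yields the initial velocity $\Delta = \dot\gamma(0) = \theta(U\Omega_{11} + \widehat U\Omega_{21}) \in T_U\mathrm{St}(n,p)$ with $A = \theta\Omega_{11}$ skew-symmetric. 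The identification $\gamma(t) = \mathrm{Exp}_{\beta,U}(t\Delta)$ from Theorem~\ref{thm:geodesics} goes through using the constraints forced by $\Omega^2 = -I_{2p}$; in particular $\Omega_{11} = 0$ forces $\Omega_{22} = 0$ and $\Omega_{21} \in \mathrm{O}(p)$, which aligns the two sides of the exponential formula with the outer factor $\exp((1-2\beta)tA)$ collapsing.

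The Fr
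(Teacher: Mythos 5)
Your approach follows the same skeleton as the paper's proof: build a candidate curve $\gamma(t)=[U\ \widehat{U}]\exp(t\theta\Omega)I_{2p\times p}$ with $\Omega=QJQ^T$, identify it as $\mathrm{Exp}_{\beta,U}(t\Delta)$, show its constant $\beta$-speed is $\sqrt{p}\,|\theta|$, and match this against the lower bound of \cref{cor:betalowerbound}. The observations $\Omega^2=-I_{2p}$, $\Omega^T\Omega=I_{2p}$, and $\exp(\theta\Omega)=\cos(\theta)I_{2p}+\sin(\theta)\Omega$ are correct and streamline the Stiefel-membership check and the Frobenius-distance computation.

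The gap lies in the identification step. You rely on $\Omega_{11}=0$ (which does indeed force $\Omega_{22}=0$ and $\Omega_{21}\in\mathrm{O}(p)$, and makes $\gamma$ a horizontal $\beta$-geodesic with $A=0$), but $\Omega_{11}=0$ is \emph{not} a consequence of $\Omega^2=-I_{2p}$, and it fails for generic $Q\in\mathrm{O}(2p)$ when $p\ge 2$. Concretely, for $p=2$ take $Q$ to be the permutation of $\mathbb{R}^4$ swapping coordinates $2$ and $3$; then $\Omega=QJQ^T$ is block-diagonal with $\Omega_{11}=\Omega_{22}=\left[\begin{smallmatrix}0&1\\-1&0\end{smallmatrix}\right]$ and $\Omega_{21}=0$, so $\gamma(t)$ degenerates to the vertical curve $UG_2(\theta t)$ of \cref{thm:lowerboundreached}. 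For $\beta>1$ and small $\theta$ the logarithm of $\gamma(1)$ from $U$ then has $A\approx\theta\Omega_{11}$ and $B\approx 0$, giving $d_\beta(U,\gamma(1))\approx\sqrt{2\beta}\,|\theta|>\sqrt{2}\,|\theta|=2\sqrt{p}\arcsin(\delta/(2\sqrt{p}))$, so the claimed equality is violated for this $Q$. Note that the paper's own proof has the same flaw: the asserted identity $\mathrm{Exp}_{\beta,U}(t\widehat{U}\theta I_p)=[U\ \widehat{U}]QK(\theta t)Q^TI_{2p\times p}$ holds only when $QJQ^T$ has vanishing diagonal $p\times p$ blocks (e.g.\ $Q=I_{2p}$). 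The conclusion $\widehat{m}_\beta=m_\beta$ still follows from that special case, but the ``for all $Q\in\mathrm{O}(2p)$'' quantifier in the statement appears to be an over-claim that neither your argument nor the paper's substantiates; your proposal does correctly isolate the structural condition ($\Omega_{11}=0$) under which the argument actually goes through, and it would be worth stating that restriction explicitly.
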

\begin{proof}
Consider $\theta\in [-\pi,\pi]$ since $\theta\mapsto K(\theta)$ has period $2\pi$, and $\m{\Delta} = \m{\widehat{U}\theta I_p}$. In this case, \eqref{eq:paramgeogeneral} reduces to $\mathrm{Exp}_{\beta,U}(t\Delta)=[U\ \widehat{U}]Q K_{\theta t} Q^T I_{2p\times p}=:\gamma(t)$. This yields $\dot{\gamma}(t) = [U\ \widehat{U}] Q K_{\theta t}\left[\begin{smallmatrix}0&\theta I_p\\
-\theta I_p&0\end{smallmatrix}\right] Q^T I_{2p\times p}$ and $\|\dot{\gamma}(t)\|_\beta^2 = \|\theta I_p\|^2_\mathrm{F} = p\theta^2$. It follows from~\eqref{eq:length} that $l_\beta(\gamma) = \sqrt{ p}|\theta|$. Moreover, similarly to~\cref{thm:lowerboundreached}, we have 
\begin{align*}
\|U - [U\ \widehat{U}] Q K(\theta) Q^T I_{2p\times p}\|_\mathrm{F}^2 &=\|I_{n\times p} -I_{n\times 2p}K(\theta) I_{2p\times p}\|^2_\mathrm{F} \\
&= \|I_p-\cos(\Theta)\|_\mathrm{F}^2+\|\sin(\Theta)\|_\mathrm{F}^2\\
&= 4p\sin^2\left(\frac{\theta}{2}\right).
\end{align*} In view of the similarities with the proof of~\cref{thm:lowerboundreached}, $\gamma$ is a \emph{minimal} geodesic and 
\begin{equation*}
	d_\beta(U, [U\ \widehat{U}] Q K(\theta) Q^T I_{2p\times p}) =l_\beta(\gamma)=  2\sqrt{p}\arcsin\left(\frac{\|U - [U\ \widehat{U}]Q K(\theta) Q^T I_{2p\times p}\|_\mathrm{F}}{2\sqrt{p}}\right).
\end{equation*}
Using the intermediate value theorem on $[0,\pi]\ni\theta\mapsto\|U - [U\ \widehat{U}]Q K(\theta) Q^T I_{2p\times p}\|_\mathrm{F}$ shows that $\|U - [U\ \widehat{U}] Q K(\theta) Q^T I_{2p\times p}\|_\mathrm{F}$ attains every value in $[0,2\sqrt{p}]$. This concludes the proof.
\end{proof}
\begin{figure}
    \centering
    \includegraphics[width = 6.4cm]{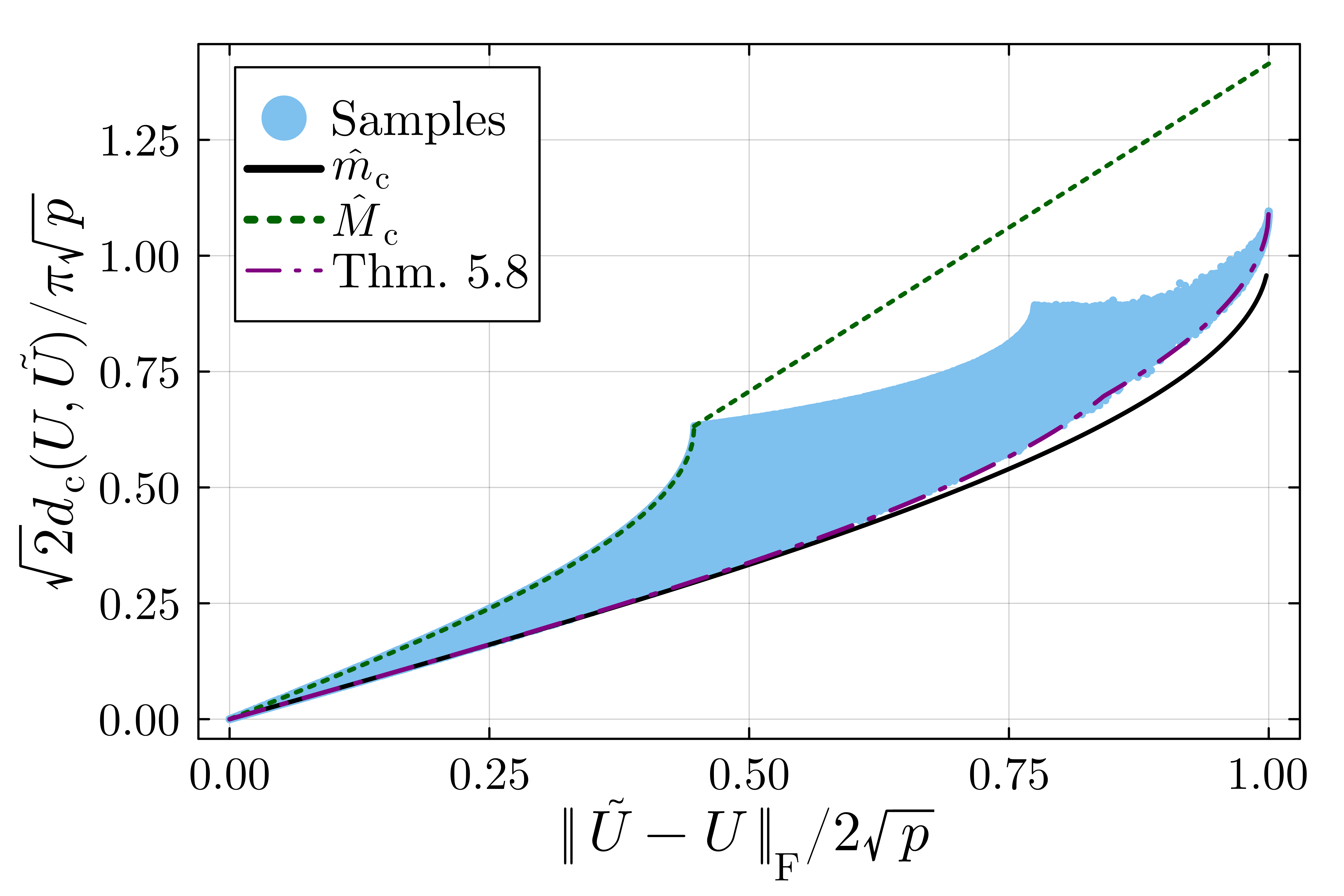}
    \includegraphics[width = 6.4cm]{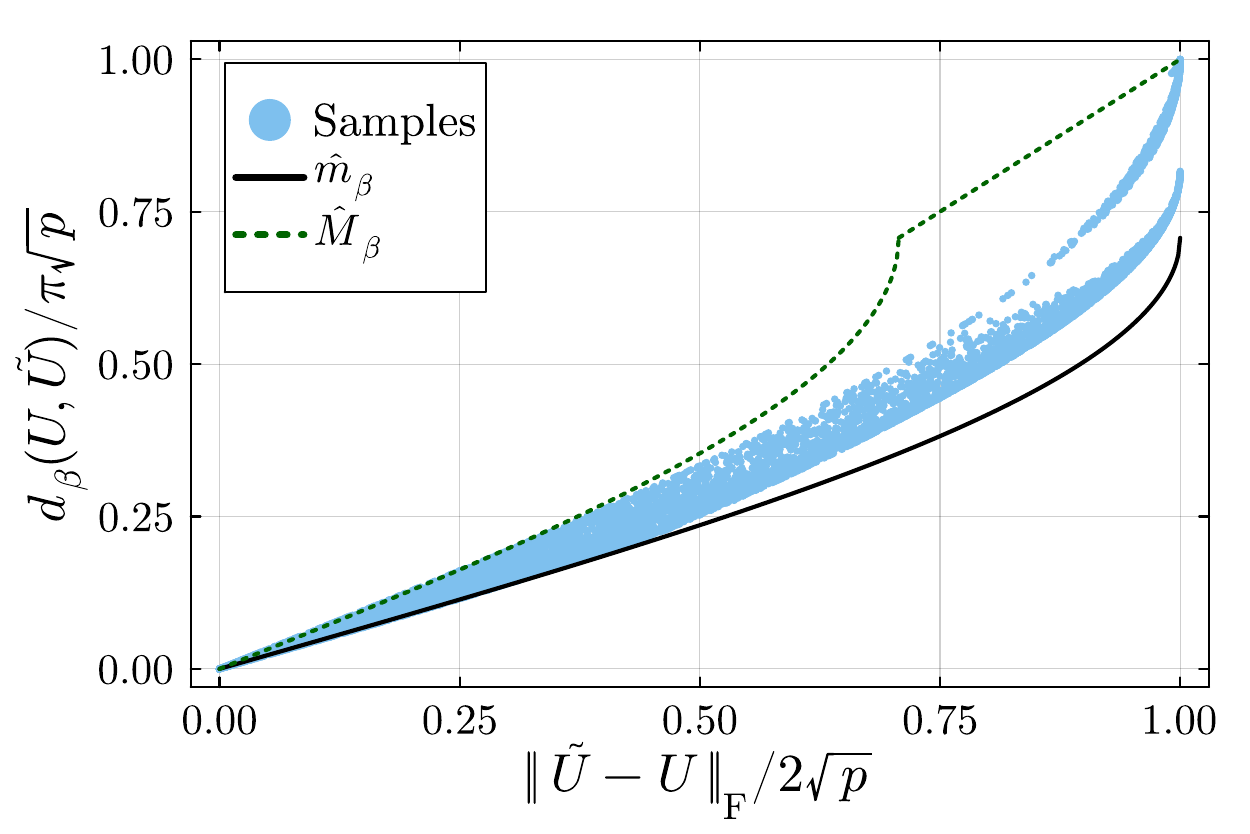}
    \caption{Bounds between the Frobenius and the $\beta$-distance on the Stiefel manifold  for $\beta=\frac{1}{2}$ on $\mathrm{St}(6,5)$ (left) and $\beta=2$ on $\mathrm{St}(3,2)$ (right):  $\widehat{m}_\beta(\delta)\leq d_{\beta}(U,\widetilde{U})\leq \widehat{M}_\beta(\delta)$ for all $U,\widetilde{U}\in \mathrm{St}(n,p)$ with $\|\widetilde{U}-U\|_\mathrm{F} =  \delta$ (see \cref{thm:generalizedbounds}). In both cases, the lower bound is not attained. On the left, we can witness the ability of~\cref{thm:upperboundonthelowerbound} to describe the exact lower bound. On the right, the two observable ``branches'' offer an example of failure of \cite[Alg.~2]{ZimmermannRalf22} to compute minimal geodesics, as explained in~\cref{app:branches}.
    %The plot shows the Frobenius distances and the analytic value $d_\mathrm{c}$ ($\beta=\frac{1}{2}$) for 1000000 randomly generated pairs $(U,\widetilde{U})$ on $\mathrm{St}(6,5)$.
    }
    \label{fig:canonicalsamples}
    \end{figure}

	\textbf{Case 3: the Euclidean metric ($\beta=1$).} We anticipate the result of~\cref{cor:reachedforallp} where we show that the lower bound is attained for all Frobenius distances and all pairs $(n,p)$ when $\beta=1$.

    \textbf{Case 4: remaining cases.} The are two remaining cases: $p$ odd with $\beta<1$ and $n<2p$ with $\beta>1$. In both cases, we conjecture that the lower bound is not attained, as corroborated in~\cref{fig:canonicalsamples} with random numerical experiments. In the left plot, we investigate the canonical metric\footnote{When $p=n-1$, the canonical geodesic distance admits an analytic solution: $d_\mathrm{c}(U,\widetilde{U})=\frac{\sqrt{2}}{2}\|\log([U\ U_\perp]^T[\widetilde{U}\ \widetilde{U}_\perp])\|_\mathrm{F}$ where the completed matrices belong to $\mathrm{SO}(n)$ \cite[Eq.~2.15]{EdelmanArias98}. Therefore, each sample in the left plot of~\cref{fig:canonicalsamples} is exactly positioned in the $(d_\mathrm{F},d_\mathrm{c})$-plane (up to the computer precision of algebraic operations).} ($\beta=\frac{1}{2}$) with $(n,p)=(6,5)$ and in the right plot, we draw experiments for $\beta=2$ and $(n,p)=(3,2)$. In both plots, we can observe a gap between the samples and $\widehat{m}_\beta$, suggesting $\widehat{m}_\beta\neq m_\beta$. We propose here to push further the analysis in the case that is certainly the most interesting for practical applications | $p$ odd and $\beta\in[\frac{1}{2},1]$ | because it interpolates the canonical and Euclidean metrics. \cref{thm:upperboundonthelowerbound} will propose an upper bound on $m_\beta$ when $p$ is odd, i.e., a function $w_\beta$ with $m_\beta(\delta)\leq w_\beta(\delta)$ for all $\delta\in[0,2\sqrt{p}]$. To this end,~\cref{thm:increasingdist} provides an intermediate result based on the following observation: when one appends supplementary columns to matrices of the Stiefel manifold, the geodesic distance in the new associated Stiefel manifold increases for $\beta\geq\frac{1}{2}$.% This holds true for $\beta\geq\frac{1}{2}$, as proved next. 
    \begin{thm}\label{thm:increasingdist}
    \it
    If $\beta\geq\frac{1}{2}$, for every $n\geq p\geq 2$ and for all $U,\widetilde{U}\in\mathrm{St}(n,p)$, we have $d_\beta(U,\widetilde{U})\geq d_\beta(U_{1:p-1},\widetilde{U}_{1:p-1})$ and $\beta=\frac{1}{2}$ is the smallest value so that it holds true. %\simcom{There is an ambiguity here about the iff. It applies on everything that preceeds.}
    \end{thm}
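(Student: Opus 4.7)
The plan is to compare $\beta$-norms pointwise between a curve $\gamma$ in $\mathrm{St}(n,p)$ and its column-truncation $\tilde\gamma(t):=\gamma(t)_{1:p-1}$, which automatically lies in $\mathrm{St}(n,p-1)$ since the first $p-1$ columns of an orthonormal $p$-frame are still orthonormal. I would pick $\gamma$ to be a minimal $\beta$-geodesic from $U$ to $\widetilde U$ (guaranteed by Hopf--Rinow, $\mathrm{St}(n,p)$ being compact). Since $\tilde\gamma$ connects $U_{1:p-1}$ to $\widetilde U_{1:p-1}$, once the pointwise bound $\|\dot{\tilde\gamma}(t)\|_\beta \le \|\dot\gamma(t)\|_\beta$ is proved, integration together with the definition \eqref{eq:geodesicdistance} immediately yields
\[
d_\beta(U_{1:p-1},\widetilde U_{1:p-1}) \le l_\beta(\tilde\gamma) \le l_\beta(\gamma) = d_\beta(U,\widetilde U).
\]

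For the pointwise bound, denote the columns of $\gamma(t)$ and $\dot\gamma(t)$ by $u_1,\dots,u_p$ and $\delta_1,\dots,\delta_p$, respectively, and expand both norms via $\|\Delta\|_\beta^2 = \sum_j\|\delta_j\|^2 - (1-\beta)\sum_{i,j}(u_i^T\delta_j)^2$ coming from \eqref{eq:beta_metric}. The two expressions differ only in the contributions carrying an index equal to $p$. Two uses of the tangent-space skew-symmetry $u_i^T\delta_j + u_j^T\delta_i = 0$ kill the diagonal entry $u_p^T\delta_p$ and identify $(u_p^T\delta_j)^2$ with $(u_j^T\delta_p)^2$, and a Parseval decomposition of $\|\delta_p\|^2$ along the basis $(u_1,\dots,u_{p-1})$ and an orthogonal completion $U_\perp$ of $\gamma(t)$ collapses the difference to the clean identity
\[
\|\dot\gamma(t)\|_\beta^2 - \|\dot{\tilde\gamma}(t)\|_\beta^2 = (2\beta-1)\sum_{i=1}^{p-1}(u_i^T\delta_p)^2 + \|U_\perp^T\delta_p\|_2^2.
\]
For $\beta \ge \tfrac12$ both summands are nonnegative, closing the forward direction.

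For sharpness of the threshold, I would exhibit an explicit failure when $\beta < \tfrac12$. Take $p = 2$, any $n \ge 3$, $U = I_{n,2}$, and $\widetilde U = U\,G_2(\pi) = -I_{n,2}$. Then \cref{thm:lowerboundreached} applies with $\theta = \pi$ and yields $d_\beta(U,\widetilde U) = \sqrt{2\beta}\,\pi$. On the other hand $U_{1:1} = e_1$ and $\widetilde U_{1:1} = -e_1$ are antipodal on $\mathrm{St}(n,1) = S^{n-1}$, and on $\mathrm{St}(n,1)$ the skew part of any tangent vector vanishes, so the $\beta$-distance coincides with the ordinary spherical distance $\pi$ independently of $\beta$. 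The required inequality $\sqrt{2\beta}\,\pi \ge \pi$ is equivalent to $\beta \ge \tfrac12$, showing that no smaller value of $\beta$ works.

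The main obstacle is the algebra behind the pointwise identity: one must carefully separate the contribution of the discarded column $\delta_p$ from the rank-one correction produced by removing $u_p$ from the projector $I_n - (1-\beta)UU^T$, and exploit both the vanishing of $u_p^T\delta_p$ and the antisymmetric pairing between $u_p^T\delta_j$ and $u_j^T\delta_p$, before the coefficient $2\beta - 1$ materialises as the exact threshold. Once this coefficient is in hand, the rest of the argument (existence of a minimizing geodesic, integration, and the sharpness example) is routine.
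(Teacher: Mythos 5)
Your argument is correct, and it takes a genuinely different route from the paper's. Both proofs truncate a minimal geodesic $\gamma$ of $\mathrm{St}(n,p)$ by dropping the last column and bound the length of the resulting curve in $\mathrm{St}(n,p-1)$, and both use the same $\mathrm{St}(3,2)$ antipodal example for sharpness. But the paper bounds $\|\dot{\tilde\gamma}(t)\|_\beta^2$ by a \emph{constant} via the explicit geodesic formula~\eqref{eq:paramgeogeneral}: it writes $\dot{\tilde\gamma}(t)$ in terms of $\exp(tS_\beta)$ and $\exp((1-2\beta)At)$, expands the trace, and observes that the column-subselection $\|B\exp((1-2\beta)At)I_{p\times p-1}\|_\mathrm{F}\le\|B\|_\mathrm{F}$; the condition $\beta\ge\tfrac12$ only enters at the end when regrouping $\|A_{1:p,1:p-1}\|_\mathrm{F}^2$ and $\|A_{p,1:p-1}\|_\mathrm{F}^2$ into $\|A\|_\mathrm{F}^2$. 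Your route instead establishes a \emph{pointwise} monotonicity of the $\beta$-norm under column truncation: writing $\|\Delta\|_\beta^2 = \sum_j\|\delta_j\|^2 - (1-\beta)\sum_{i,j}(u_i^T\delta_j)^2$ for a tangent vector with columns $\delta_j$ at a footpoint with columns $u_i$, using $u_p^T\delta_p=0$ and $(u_p^T\delta_j)^2=(u_j^T\delta_p)^2$ (both from $\gamma^T\dot\gamma+\dot\gamma^T\gamma=0$), and Parseval, one obtains
\[
\|\dot\gamma(t)\|_\beta^2 - \|\dot{\tilde\gamma}(t)\|_\beta^2 = (2\beta-1)\sum_{i=1}^{p-1}(u_i^T\delta_p)^2 + \|U_\perp^T\delta_p\|_2^2 \ge 0 \quad\text{for }\beta\ge\tfrac12.
\]
I verified this identity and it is exactly right. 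Your version is more intrinsic and arguably cleaner: it never touches the explicit geodesic parameterization, needs only the tangent-space characterization of the $\beta$-metric and Hopf--Rinow, and makes the coefficient $2\beta-1$ appear as a transparent, pointwise threshold rather than through an integral bound followed by a regrouping inequality. The trade-off is that the paper's computation, being tied to the exponential formula, is perhaps more robust to the reader who wants to see exactly where in~\eqref{eq:paramgeogeneral} the slack arises; your version requires trusting a small amount of index bookkeeping with the skew-symmetry relations. Both proofs are complete and correct.
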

    \begin{proof}
    Given $\Delta = UA+U_\perp B\in\mathrm{Log}_{\beta,U}(\widetilde{U})$, we define $S_\beta:=\left[\begin{smallmatrix}
    2\beta A& -B^T\\
    B &0
    \end{smallmatrix}\right]$ and $R_{\beta}(t):=\left[\begin{smallmatrix}
    \exp((1-2\beta)At)&0\\
    0&I_{n-p}
    \end{smallmatrix}\right]$. Consider the curve $\gamma:[0,1]\rightarrow\mathrm{St}(n,p-1):t\mapsto\mathrm{Exp}_{\beta,U}(t\Delta)I_{n\times(p-1)}=[U\ U_\perp]\exp(tS_\beta)R_{\beta}(t)I_{n\times(p-1)}$. Notice that $\gamma(0)=U_{1:p-1}$ and $\gamma(1)=\widetilde{U}_{1:p-1}$. The strategy is to show that $l_\beta(\gamma)\leq\|\Delta\|_\beta=:d_\beta(U,\widetilde{U})$. % since $d_\beta(U_{1:p-1},\widetilde{U}_{1:p-1})\leq l_\beta(\gamma)$ by definition. 
    In what follows, we leverage that $A$ and $\exp((1-2\beta)At)$ commute. Given that $\dot{\gamma}(t) =[U\ U_\perp]\exp(tS_\beta)S_{\frac{1}{2}}R_{\beta}(t)I_{n\times(p-1)}$ and by definition \eqref{eq:beta_metric} of the $\beta$-metric, we have
    \begin{align*}
    \|\dot{\gamma}(t)\|_\beta^2&=\Tr\left(I_{(p-1)\times n}R_{\beta}^T(S_{\frac{1}{2}}^TS_{\frac{1}{2}}-(1-\beta)S_{\frac{1}{2}}^TR_{\beta}\left[\begin{smallmatrix}
    I_{p-1}&0\\
    0&0_{n-p+1}
    \end{smallmatrix}\right]R_{\beta}^TS_{\frac{1}{2}})R_{\beta}I_{n\times (p-1)}\right)\\
    &=\Tr\left(I_{p-1\times p}\exp((1-2\beta)At)^T(\beta A^TA+B^TB)\exp((1-2\beta)At)I_{p\times p-1}\right)\\
    &+\Tr(I_{p-1\times p}((1-\beta)A^T\left[\begin{smallmatrix}
    0_{p-1}&0&0\\
    0&1&0\\
    0&0&0_{n-p+1}
    \end{smallmatrix}\right]A)I_{p\times p-1})\\
    &=\beta \|A_{1:p,1:p-1}\|_\mathrm{F}^2 + \|B\exp((1-2\beta)At)I_{p\times p-1}\|_\mathrm{F}^2 + (1-\beta)\|A_{p,1:p-1}\|_\mathrm{F}^2\\
    &\leq \beta \|A_{1:p,1:p-1}\|_\mathrm{F}^2 + \|B\|_\mathrm{F}^2 + (1-\beta)\|A_{p,1:p-1}\|_\mathrm{F}^2.
    \end{align*}
    To conclude, we want $\beta$ to satisfy $(1-\beta)\leq\beta$, which holds if and only if $\beta\geq\frac{1}{2}$. This yields
    \begin{align*}
    l_\beta(\gamma) = \int_0^1 \|\dot{\gamma}(t)\|_\beta\mathrm{d}t&\leq\sqrt{\beta \|A_{1:p,1:p-1}\|_\mathrm{F}^2 + \|B\|_\mathrm{F}^2 + (1-\beta)\|A_{p,1:p-1}\|_\mathrm{F}^2}\\
    &\leq \sqrt{\beta (\|A_{1:p,1:p-1}\|_\mathrm{F}^2+\|A_{p,1:p-1}\|_\mathrm{F}^2) + \|B\|_\mathrm{F}^2}\\
    &=\sqrt{\beta \|A\|_\mathrm{F}^2 + \|B\|_\mathrm{F}^2}=\|\Delta\|_\beta=d_\beta(U,\widetilde{U}).
    \end{align*}
    By definition, we have $d_\beta(U_{1:p-1},\widetilde{U}_{1:p-1})\leq l_\beta(\gamma)\leq d_\beta(U,\widetilde{U})$. Finally, assume $\beta<\frac{1}{2}$, take $U=I_{3\times 2}$ and $\widetilde{U}=-U$ in $\mathrm{St}(3,2)$. Consider \cref{thm:lowerboundreached} with $Q=I_p$ and $\delta=2\sqrt{p}$. We must have $\mod(|\theta|, 2\pi)=\pi$ for $UQG_p(\pi)Q^T=-U$ to hold. Consequently, $d_\beta(U,-U)=2\sqrt{2\beta }\arcsin(1)=\pi\sqrt{2\beta}<\pi$. However, regardless of $\beta$, we have $d_\beta(U_{1},\widetilde{U}_1)=\pi$, i.e., the geodesic distance between two antipodal points on the unit sphere in the Euclidean space $\mathbb{R}^n$. Hence, $\beta=\frac{1}{2}$ is the smallest value such that~\cref{thm:increasingdist} holds.
    \end{proof}
    
    %Theorem~\cref{thm:increasingdist} is a key to analyze the case $p$ odd since $p+1$ even follows directly. 
    We leverage~\cref{thm:increasingdist} to prove that, although $\widehat{m}_\beta\neq m_\beta$ for $p$ odd, we can bound $m_\beta$ from above. 
    \begin{thm}\label{thm:upperboundonthelowerbound}
    \it
    If $p$ is odd and $n>p$, for all $\beta\in[\frac{1}{2},1]$, all $\delta\in[0,2\sqrt{p}]$, $$m_\beta(\delta)\leq 2\sqrt{\beta}\min\left\{\sqrt{p-1}\arcsin\left(\frac{\delta}{2\sqrt{p-1}}\right),\sqrt{p+1}\arcsin\left(\frac{\delta}{2\sqrt{p}}\right)\right\},$$
     where the first argument of the $\min$ is only considered if $p\geq3 $ and $\delta\leq2\sqrt{p-1}$.
    \end{thm}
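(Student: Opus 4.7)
The plan is to prove each of the two arguments of the $\min$ separately by exhibiting, in each case, an explicit pair $(U,\widetilde U)\in\mathrm{St}(n,p)$ with $\|U-\widetilde U\|_\mathrm{F}=\delta$ together with a curve between them whose $\beta$-length matches the targeted value; by~\eqref{eq:true_lower_bound}, this bounds $m_\beta(\delta)$ from above by that length. The crucial observation is that, since $p$ is odd, both $p-1$ and $p+1$ are even, so \cref{thm:lowerboundreached} applies inside $\mathrm{St}(n,p-1)$ (provided $p\geq 3$) and inside $\mathrm{St}(n,p+1)$ (possible since $n>p$ implies $n\geq p+1$). Both applications also require $\beta\in(0,1]$, consistent with the hypothesis $\beta\in[\tfrac12,1]$.

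For the first argument, I would fix any $U\in\mathrm{St}(n,p)$ and take $\widetilde U:=U\left[\begin{smallmatrix}G_{p-1}(\theta) & 0\\ 0 & 1\end{smallmatrix}\right]$. The curve $\gamma(t):=U\left[\begin{smallmatrix}G_{p-1}(\theta t) & 0\\ 0 & 1\end{smallmatrix}\right]$ has initial velocity $\Delta=UA$ with $A$ skew-symmetric and supported in the top-left $(p-1)\times(p-1)$ block, and with zero $B$-part, so~\eqref{eq:beta_metric} gives $\|\dot\gamma(t)\|_\beta^2=\beta\|A\|_\mathrm{F}^2=\beta(p-1)\theta^2$. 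The same orthogonal-invariance computation as in the proof of \cref{thm:lowerboundreached} yields $\|U-\widetilde U\|_\mathrm{F}=\|I_{p-1}-G_{p-1}(\theta)\|_\mathrm{F}=2\sqrt{p-1}|\sin(\theta/2)|$, so choosing $|\theta|=2\arcsin(\delta/(2\sqrt{p-1}))$ (well-defined thanks to $\delta\leq 2\sqrt{p-1}$) delivers $l_\beta(\gamma)=2\sqrt{\beta(p-1)}\arcsin(\delta/(2\sqrt{p-1}))$, bounding $d_\beta(U,\widetilde U)$ as required.

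For the second argument, I would apply \cref{thm:lowerboundreached} inside $\mathrm{St}(n,p+1)$ (with $Q=I_{p+1}$) to produce $\bar U,\bar{\widetilde U}=\bar U G_{p+1}(\theta)$, then restrict columns by setting $U:=\bar U\, I_{(p+1)\times p}$ and $\widetilde U:=\bar{\widetilde U}\, I_{(p+1)\times p}$. Writing $G_{p+1}(\theta)I_{(p+1)\times p}$ explicitly---it keeps $(p-1)/2$ full $B_\theta$ blocks plus the single column $(0,\ldots,0,\cos\theta,-\sin\theta)^T$ from the truncated last block---yields $\|U-\widetilde U\|_\mathrm{F}^2=4(p-1)\sin^2(\theta/2)+(1-\cos\theta)^2+\sin^2\theta=4p\sin^2(\theta/2)$, while $\|\bar U-\bar{\widetilde U}\|_\mathrm{F}^2=4(p+1)\sin^2(\theta/2)$. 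Hence $\delta=2\sqrt p|\sin(\theta/2)|$ and $\|\bar U-\bar{\widetilde U}\|_\mathrm{F}=\delta\sqrt{(p+1)/p}\in[0,2\sqrt{p+1}]$; \cref{thm:increasingdist} (whose hypothesis $\beta\geq\tfrac12$ is precisely what forces the lower bound on $\beta$ in the statement) then gives $d_\beta(U,\widetilde U)\leq d_\beta(\bar U,\bar{\widetilde U})=2\sqrt{\beta(p+1)}\arcsin(\delta\sqrt{(p+1)/p}/(2\sqrt{p+1}))=2\sqrt{\beta(p+1)}\arcsin(\delta/(2\sqrt p))$.

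The hard part will be the bookkeeping in the second construction: one must verify that truncating the last $2\times 2$ block of $G_{p+1}(\theta)$ precisely accounts for the switch from $\sqrt{p+1}$ to $\sqrt p$ inside the $\arcsin$ (that block contributes exactly $4\sin^2(\theta/2)$, one ``$p$-unit'' of Frobenius mass), and track the algebraic cancellation $\arcsin(\delta\sqrt{(p+1)/p}/(2\sqrt{p+1}))=\arcsin(\delta/(2\sqrt p))$. Taking the minimum of the two resulting upper bounds then completes the proof.
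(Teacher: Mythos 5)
Your proof is correct and follows essentially the same route as the paper: embed $G_{p-1}(\theta)$ in the top-left block for the first argument, and for the second argument apply \cref{thm:lowerboundreached} in $\mathrm{St}(n,p+1)$ together with \cref{thm:increasingdist} to pass from the augmented pair back to $\mathrm{St}(n,p)$. Your write-up is, if anything, slightly more careful than the paper's: you make the column-truncation Frobenius computation $4(p-1)\sin^2(\theta/2)+(1-\cos\theta)^2+\sin^2\theta=4p\sin^2(\theta/2)$ explicit and you keep the $\sqrt{\beta}$ factor throughout, whereas the paper's displayed chain for $d_\beta([U\ u_\perp],[\widetilde U\ \widetilde u_\perp]_\theta)$ drops the $\sqrt{\beta}$ (it reappears only in the theorem statement).
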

    \begin{proof}
    We start by considering the first argument of the $\min$. For well-definedness of $G_{p-1}(\theta)$, it requires $p\geq3$. For all $\theta\in [-\pi,\pi]$, consider $U\in\mathrm{St}(n,p)$ and $\widetilde{U}_\theta=[U_{1:p-1}G_{p-1}(\theta)\ U_p]$ with $G_{p-1}(\theta)\in\mathrm{SO}(p-1)$ (see \eqref{eq:G_theta}). By~\cref{thm:lowerboundreached}, the curve $\gamma:[0,1]\rightarrow\mathrm{St}(n,p):t\mapsto\widetilde{U}_{\theta t}$ has length $l_\beta(\gamma)=2\sqrt{\beta(p-1)}\arcsin\left(\frac{\|U-\widetilde{U}_\theta\|_\mathrm{F}}{2\sqrt{p-1}}\right)$. Since $d_\beta(U,\widetilde{U}_\theta)\leq l_\beta(\gamma)$ and given that $[0,\pi]\ni\theta\mapsto\|U-\widetilde{U}_{\theta}\|_\mathrm{F}$ achieves every value in $[0,2\sqrt{p-1}]$, the first part of the statement is proven.
    
    We address now the second argument of the claim. For all $U,\widetilde{U}\in\mathrm{St}(n,p)$, all $u_\perp,\widetilde{u}_\perp\in\mathrm{St}(n,1)$ with $u_\perp^TU=\widetilde{u}_\perp^T\widetilde{U}=0$, we know from~\cref{thm:increasingdist} that
    \begin{align*}
    d_\beta(U,\widetilde{U})&\leq d_\beta([U\ u_\perp],[\widetilde{U}\ \widetilde{u}_\perp])\\
    \Longrightarrow m_\beta(\delta):= \min_{\|U-\widetilde{U}\|_\mathrm{F}=\delta}d_\beta(U,\widetilde{U})&\leq\min_{\|U-\widetilde{U}\|_\mathrm{F}=\delta}d_\beta([U\ u_\perp],[\widetilde{U}\ \widetilde{u}_\perp]).
    \end{align*}
    \cref{thm:lowerboundreached} shows that the minimum value of the right-hand-side is attained for $[\widetilde{U}\ \widetilde{u}_\perp]_{\theta} := [U\ u_\perp]G_{p+1}(\theta)$ with $G_{p+1}(\theta)\in\mathrm{SO}(p+1)$ for some $\theta$ such that $\|U-\widetilde{U}\|_\mathrm{F}=\delta$. It is clear that $\|U-\widetilde{U}\|_\mathrm{F}=0$  when $\theta=0$ and $\|U-\widetilde{U}\|_\mathrm{F}=2\sqrt{p}$ when $\theta=\pi$. Hence, the intermediate value theorem ensures the existence of $\theta$ for all $\delta\in[0,2\sqrt{p}]$. We relate $\delta$ to $\theta$ precisely:
    \begin{align*}
    \delta^2=\|U-[U\ u_\perp]G_{p+1}(\theta)I_{(p+1)\times p}\|_\mathrm{F}^2 = 4p\sin^2\left(\frac{\theta}{2}\right)\Longrightarrow |\theta| &= 2\arcsin\left(\frac{\delta}{2\sqrt{p}}\right).
    \end{align*}
    Moreover, $\|[U\ u_\perp]-[\widetilde{U}\ \widetilde{u}_\perp]_{\theta}\|_\mathrm{F} = 2\sqrt{(p+1)\sin^2(\frac{\theta}{2})}$ and by~\cref{thm:lowerboundreached}, we have
    \begin{align*}
    d_\beta([U\ u_\perp],[\widetilde{U}\ \widetilde{u}_\perp]_{\theta})&=2\sqrt{p+1}\arcsin\left(\frac{\|[U\ u_\perp]-[\widetilde{U}\ \widetilde{u}_\perp]_{\theta}\|_\mathrm{F}}{2\sqrt{p+1}}\right)\\
    &=2\sqrt{p+1}\arcsin\left(\frac{2\sqrt{(p+1)\sin^2(\frac{\theta}{2})}}{2\sqrt{p+1}}\right)\\
    &=2\sqrt{p+1}\arcsin\left(\frac{\delta}{2\sqrt{p}}\right).
    \end{align*}
    In consequence, we showed that
    %\begin{equation*}
    $m_\beta(\delta)\leq 2\sqrt{p+1}\arcsin\left(\frac{\delta}{2\sqrt{p}}\right)$.
    %\end{equation*}
    This concludes the proof.
    \end{proof}
    \cref{thm:upperboundonthelowerbound} concludes a detailed study of the smallest geodesic distance given the Frobenius distance. In the next sections, we address the largest geodesic distance given the Frobenius distance.

\section{Upper bounds on the Euclidean geodesic distance by the Frobenius distance} \label{sec:general_upper_bound}
In this section, we obtain two upper bounds on the Euclidean geodesic distance. The results will be extended to all $\beta$-distances in~\cref{sec:generalization_to_beta}. We start by proving a linear upper bound in terms of the Frobenius distance in~\cref{thm:linearbound}. As corroborated by~\cref{fig:error_bound}, this bound is a good fit to $M_\mathrm{E}$ for pairs of matrices $U,\widetilde{U}$ with $\|U-\widetilde{U}\|_\mathrm{F}> 2$ since the relative error is less than $10\%$. However, for nearby pairs of matrices, the relative error of the linear upper bound raises to $57\%$. Therefore, when $\|U-\widetilde{U}\|_\mathrm{F}\leq 2$, we obtain in~\cref{thm:first_upper_bound} a nonlinear upper bound in terms of the Frobenius distances. As shown by~\cref{cor:reach_first_upper_bound}, this nonlinear bound cannot be improved, i.e., $\widehat{M}_\mathrm{E}(\delta)=M_\mathrm{E}(\delta)$ for all $\delta\in[0,2]$. %This second bound can not be improved when $\beta\leq 1$.
\begin{figure}
\centering
\includegraphics[width = 7cm]{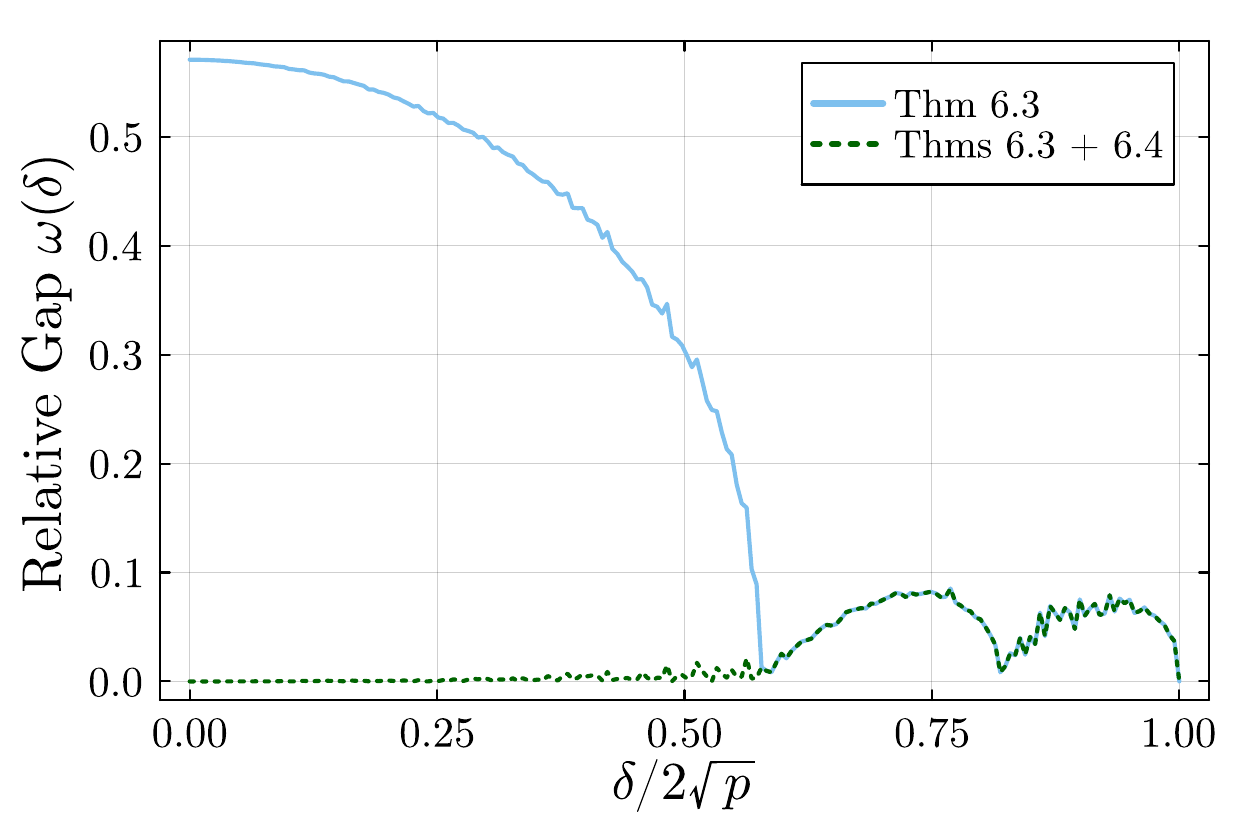}
\caption{Relative error between the theoretical upper bound (considering either only~\cref{thm:linearbound} or including~\cref{thm:first_upper_bound}) and the largest Euclidean geodesic distance obtained by numerical experiments on $\mathrm{St}(4,3)$. Let $\delta\in[0,2\sqrt{p}]$, $M_\mathrm{E}(\delta) := \max_{\|U-\widetilde{U}\|_\mathrm{F}=\delta}d_\mathrm{E}(U,\widetilde{U})$ and $\widehat{M}_\mathrm{E}(\delta)$ be the upper bound. Then, the relative gap is defined by $\omega(\delta) :=\frac{\widehat{M}_\mathrm{E} - M_\mathrm{E}(\delta)}{M_\mathrm{E}(\delta)}$. The value $M_\mathrm{E}(\delta)$ is estimated from random samples $U_i,\widetilde{U}_i$ by $M_\mathrm{E}(\delta)\approx \max_{\|U_i-\widetilde{U}_i\|_\mathrm{F}\in[\delta+\frac{\sqrt{p}}{100})}d_\mathrm{E}(U_i,\widetilde{U}_i)$ using \cite[Alg.~2]{ZimmermannRalf22} to estimate $d_\mathrm{E}(U_i,\widetilde{U}_i)$.}
\label{fig:error_bound}
\end{figure}
\subsection{Geometric sets in the ambient space $\mathbb{R}^{n\times p}$} \label{sec:sets}
Specific curves are needed to continue exploring the bounds from~\cref{fig:shootingfig}. These curves are related to the sets introduced in this section, illustrated in~\cref{fig:sets}. Given any two points $U,\widetilde{U}\in \mathrm{St}(n,p)$, we define the matrix-hypersphere
\begin{equation}\label{eq:bigsphere}
    \mathbb{S}_{\sqrt{p}}^{n\times p}:=\left\{\m{X}\in\mathbb{R}^{n\times p}\ | \ \langle X,X\rangle_\mathrm{F}  = p\right\}, 
\end{equation}
and a second matrix-hypersphere $\Sigma_{U,\widetilde{U}}$ for which $U$ and $\widetilde{U}$ are antipodal points. Let $\m{C}=\frac{U+\widetilde{U}}{2}$ and $r = \frac{\|\widetilde{U}-U\|_{\mathrm{F}}}{2}$. The matrix-hypersphere $\Sigma_{U,\widetilde{U}}$ is defined by 
\begin{equation}\label{eq:littlesphere}
    \Sigma_{U,\widetilde{U}}:=\left\{\m{X}\in\mathbb{R}^{n\times p}\ | \ \langle X-C,X-C \rangle_\mathrm{F} = r^2\right\}.
\end{equation}
If $\widetilde{U}\neq-U$, the intersection of~\eqref{eq:bigsphere} and~\eqref{eq:littlesphere} belongs to the matrix-hyperplane $\Pi_{U,\widetilde{U}}$ defined by the set
\begin{equation}\label{eq:planeofcap}
    \Pi_{U,\widetilde{U}}:= \left\{\m{X}\in \mathbb{R}^{n\times p}\ | \ \langle X,C\rangle_\mathrm{F} = \langle U,C\rangle_\mathrm{F}\right\}.
\end{equation}
%The hyperplane $\Pi_{U,\widetilde{U}}$  in $\mathbb{R}^{n\times p}$ cuts a spherical cap of $\mathbb{S}_{\sqrt{p}}^{n\times p}$ (the cap is defined as the part that does not include the origin) and divides $\Sigma_{U,\widetilde{U}}$ into two symmetrical parts. 
The sets $\mathbb{S}_{\sqrt{p}}^{n\times p},\Sigma_{U,\widetilde{U}}$ and $\Pi_{U,\widetilde{U}}$ are essential to obtain geometric insights on $\mathrm{St}(n,p)$ and are at the core of~\cref{thm:Euclideanlowerbound},~\cref{thm:linearbound} and~\cref{thm:reachlinearbound}. %,~\cref{thm:nondecreasing} and~\cref{thm:sphericalcap}. 
Given the absence of any ambiguity, we will simply refer to these sets as ``hypersphere/plane'' instead of ``matrix-hypersphere/plane''.
\begin{figure}
    \centering
    \includegraphics[width = 7cm]{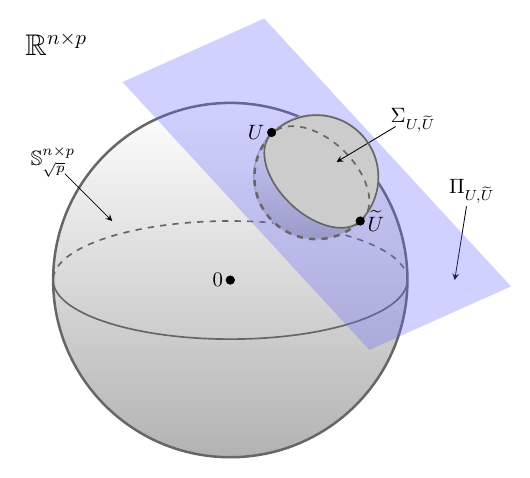}
    \caption{Abstract representation (correct for $n=3$, $p=1$) of the sets $\mathbb{S}_{\sqrt{p}}^{n\times p},\Sigma_{U,\widetilde{U}}$ and $\Pi_{U,\widetilde{U}}$ introduced in~\cref{sec:sets}. The hypersphere $\mathbb{S}_{\sqrt{p}}^{n\times p}$, which contains the Stiefel manifold $\mathrm{St}(n,p)$, also contains the points $U,\widetilde{U}\in\mathrm{St}(n,p)$. $U,\widetilde{U}$ are antipodal points of $\Sigma_{U,\widetilde{U}}$ and $\Pi_{U,\widetilde{U}}$ is the hyperplane that contains the intersection of $\mathbb{S}_{\sqrt{p}}^{n\times p}$ and $\Sigma_{U,\widetilde{U}}$.}
    \label{fig:sets}
\end{figure}

\subsection{A linear upper bound on the Euclidean geodesic distance by the Frobenius distance} \label{sec:upperbound} 
%We continue our investigation of the bounds of Figure~\cref{fig:shootingfig}. 
We wish to obtain a linear upper bound on the $\beta$-distance by the Frobenius distance. Two milestones (\cref{lem:augmentedrows} and~\cref{lem:condforsyst}) are needed to establish $d_\mathrm{E}(U,\widetilde{U})\leq \frac{\pi}{2}\|\widetilde{U}-U\|_{\mathrm{F}}$ (\cref{thm:linearbound}). For all $U\in \mathrm{St}(n,p)$, we define the augmentation of $U$, $U_{N}\in \mathrm{St}(n+N,p)$, as the matrix obtained by appending $N$ rows of zeros to $U$: $U_N := \begin{bmatrix}
    U\\
    \m{0}_{N\times p}
\end{bmatrix}$.~\cref{lem:augmentedrows} shows that it leaves the geodesic distance unchanged when $n\geq 2p$ for all $\beta>0$. If $\beta\leq 1$, we conjecture that $n\geq 2p$ is a ``parasite condition'', i.e.,~\cref{lem:augmentedrows} holds true for $p\leq n<2p$ (when $p=n$, the two connected of $\mathrm{St}(n,n)=\mathrm{O}(n)$ must be considered separately for the distance to be well-defined). Although we could not prove this conjecture, it is important to keep in mind that most applications from machine learning and statistics on manifolds feature a ``$n\gg p$'' setting. If $\beta>1$, counter-examples show that~\cref{lem:augmentedrows} does not hold for $n<2p$, see \cref{app:branches}.

\begin{lem}\label{lem:augmentedrows}
    \it
    Let $n\geq 2p$. For every $\beta>0$, every $N\in\mathbb{N}$ and all $U,\widetilde{U}\in \mathrm{St}(n,p)$, it holds that $d_{\beta}(U,\widetilde{U})=d_{\beta}(U_{N},\widetilde{U}_{N})$.
\end{lem}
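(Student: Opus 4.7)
My plan is to split the equality into the two inequalities $d_\beta(U_N,\widetilde U_N)\leq d_\beta(U,\widetilde U)$ and $d_\beta(U_N,\widetilde U_N)\geq d_\beta(U,\widetilde U)$. The first is elementary and does not require the hypothesis $n\geq 2p$: for any piecewise smooth curve $\gamma\colon [0,1]\to\mathrm{St}(n,p)$ from $U$ to $\widetilde U$, the augmented curve $\tilde\gamma(t):=\begin{bmatrix}\gamma(t)\\ 0_{N\times p}\end{bmatrix}$ lies in $\mathrm{St}(n+N,p)$ and joins $U_N$ to $\widetilde U_N$. Decomposing $\dot\gamma(t)=\gamma(t)A(t)+\gamma(t)_\perp B(t)$, the corresponding decomposition of $\dot{\tilde\gamma}(t)$ keeps the same skew component $A(t)$ and has a complementary part of the same Frobenius norm $\|B(t)\|_\mathrm{F}$, so by~\eqref{eq:beta_metric} $\|\dot{\tilde\gamma}(t)\|_\beta=\|\dot\gamma(t)\|_\beta$. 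Hence $l_\beta(\tilde\gamma)=l_\beta(\gamma)$, and the inequality follows by taking the infimum over $\gamma$ in~\eqref{eq:geodesicdistance}.

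For the reverse inequality, I would invoke Hopf--Rinow to select a minimal $\beta$-geodesic $\gamma^\star(t)=\mathrm{Exp}_{\beta,U_N}(t\Delta^\star)$ from $U_N$ to $\widetilde U_N$, so that $\|\Delta^\star\|_\beta=d_\beta(U_N,\widetilde U_N)$. Because $n\geq 2p$ gives $n+N\geq 2p$, the reduced form of~\cref{thm:geodesics} applies:
\begin{equation*}
\gamma^\star(t)=[U_N\ Q]\exp\!\left(t\begin{bmatrix}2\beta A & -B^T\\ B & 0\end{bmatrix}\right)I_{2p\times p}\exp\!\left(t(1-2\beta)A\right),
\end{equation*}
with $A=U_N^T\Delta^\star\in\mathrm{Skew}(p)$, $Q\in\mathrm{St}(n+N,p)$ satisfying $Q^TU_N=0$, and $QB=(I_{n+N}-U_NU_N^T)\Delta^\star$ with $B\in\mathbb{R}^{p\times p}$. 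The decisive step is to exhibit a choice of $Q$ of block form $Q=\begin{bmatrix}Q_1\\ 0_{N\times p}\end{bmatrix}$ with $Q_1\in\mathrm{St}(n,p)$ and $Q_1^TU=0$; the dimensional slack is available because $Q_1$ must consist of $p$ orthonormal columns in the $(n-p)$-dimensional subspace $U^\perp\subseteq\mathbb{R}^n$, and $n-p\geq p$ by hypothesis. With $Q$ so chosen, $[U_N\ Q]$ has zero last $N$ rows, hence $\gamma^\star(t)$ itself has zero last $N$ rows for every $t$ and is the lift of a curve in $\mathrm{St}(n,p)$ joining $U$ to $\widetilde U$ of length $\|\Delta^\star\|_\beta$, giving $d_\beta(U,\widetilde U)\leq d_\beta(U_N,\widetilde U_N)$.

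The principal difficulty is justifying the existence of such a block-structured $Q$, since the natural $Q$ coming from a thin decomposition of $(I_{n+N}-U_NU_N^T)\Delta^\star$ need not have vanishing bottom rows. My plan is to exploit the isometric involution $\sigma(X):=\mathrm{diag}(I_n,-I_N)\,X$, which fixes both $U_N$ and $\widetilde U_N$ and whose fixed-point set is precisely the embedded $\mathrm{St}(n,p)\subset\mathrm{St}(n+N,p)$ (a totally geodesic submanifold): when the minimal geodesic is unique, $\gamma^\star=\sigma\circ\gamma^\star$ forces $\gamma^\star$ into the fixed set and the result is immediate; in the non-unique case, one combines the endpoint constraint $\gamma^\star(1)=\widetilde U_N$, which imposes a vanishing condition on the bottom $N$ rows of $Q$ modulated by the matrix exponential block, with the dimensional slack $n-p\geq p$ to reconfigure the initial velocity while preserving its endpoint and $\beta$-norm. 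This reconfiguration is the technical heart of the proof and the precise point where the hypothesis $n\geq 2p$ is indispensable; its failure for $\beta>1$ and $n<2p$ is what produces the counter-examples referenced in~\cref{app:branches}.
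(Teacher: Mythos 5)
Your easy direction $d_\beta(U_N,\widetilde U_N)\le d_\beta(U,\widetilde U)$ is correct and needs no hypothesis on $n,p$. The involution observation is also sound as far as it goes: $\sigma(X):=\left[\begin{smallmatrix}I_n&0\\0&-I_N\end{smallmatrix}\right]X$ is an isometry of $\bigl(\mathrm{St}(n+N,p),\langle\cdot,\cdot\rangle_\beta\bigr)$ because the $\beta$-metric is invariant under left multiplication by $\mathrm O(n+N)$, its fixed set is the embedded $\mathrm{St}(n,p)$, and a \emph{unique} minimal geodesic between fixed points must itself be fixed. But a totally geodesic fixed-point set of an isometric involution need not be geodesically convex in the ambient manifold, and~\cref{app:branches} is precisely such a counterexample: for $\beta>1$ and $n<2p$, both $U_N$ and $-U_N$ lie in the fixed set, yet every minimal geodesic joining them leaves it. The non-unique case therefore cannot be dispatched by the symmetry alone; it is exactly where $n\ge 2p$ must enter, and your sketch of the ``reconfiguration'' states the goal without proving it. That gap is the whole difficulty of the lemma.

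The paper closes it by an explicit gauge modification of $Q$. Let $\Delta=U_N A+QB$ be a minimizer. Choose $R\in\mathrm O(p)$ so that $QR=[\widehat Q_1\ \widehat Q_2]$ with $\mathrm{col}([U_N\ \widehat Q_1])=\mathrm{col}([U_N\ \widetilde U_N])$. Then $\widehat Q_1$ automatically has zero bottom $N$ rows (because $U_N$ and $\widetilde U_N$ do), while $\widehat Q_2^T U_N=\widehat Q_2^T\widetilde U_N=0$, so the endpoint equation derived from~\eqref{eq:paramgeogeneral} does not see $\widehat Q_2$ and still holds if $\widehat Q_2$ is replaced by any orthonormal matrix of the same size with columns in $\mathrm{col}([U_N\ \widetilde U_N])^\perp$. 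That span lies in the top $n$ rows, and $n\ge 2p$ guarantees its orthogonal complement there is large enough to accommodate a replacement with zero bottom rows. Since $A$ and $B$ are untouched, $\|\Delta\|_\beta$ is preserved and the modified tangent vector lies in the embedded $T_U\mathrm{St}(n,p)$, giving the reverse inequality. This is the construction your proposal gestures at but does not carry out.
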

\begin{proof}
We start from the definition of geodesic distance on $\mathrm{St}(n+N,p)$:
\begin{equation}\label{eq:distproblem}
	d_\beta(U_N,\widetilde{U}_N) = \min\{\|\Delta\|_\beta\ |\ \Delta\in \mathrm{T}_{U_N} \mathrm{St}(n+N,p),\ \mathrm{Exp}_{\beta,U_N}(\Delta) = \widetilde{U}_N\}.
\end{equation}
Let $\Delta$ realize the $\min$ in~\eqref{eq:distproblem}. Then  $\Delta = U_N A + QB$ with $ Q\in\mathrm{St}(n+N,p)$ and $Q^TU_N=0$. %We show the existence of $\widetilde{\Delta}$ solving such that $\widetilde{\Delta}=\left[\begin{smallmatrix}\Delta\\0_{N\times p}\end{smallmatrix}\right]$ by choosing $R\in\mathrm{O}(p)$ appropriately. 
In view of \eqref{eq:paramgeogeneral}, it holds that $\mathrm{col}([U_N\ Q])\supseteq \mathrm{col}([U_N\ \widetilde{U}_N])$. Hence, there exist $R\in\mathrm{O}(p)$ such that $QR:=[\widehat{Q}_{1}\ \widehat{Q}_{2}]$ % a partition of the basis of a $p$-dimensional subspace of $\mathrm{col}(U_N)^\perp$ 
satisfies \begin{equation}\label{eq:conservecolumns}
\mathrm{col}([U_N\ \widehat{Q}_{1}])=\mathrm{col}([U_N\ \widetilde{U}_N]).
\end{equation}
Let $q$ denote the number of columns of $\widehat{Q}_{1}$. If $q=0$, i.e., $\mathrm{col}(U_N)=\mathrm{col}(\widetilde{U}_N)$, $\widehat{Q}_{1}$ disappears. If $q=p$, i.e., $\mathrm{col}(U_N)\cap\mathrm{col}(\widetilde{U}_N)=\emptyset$, $\widehat{Q}_{2}$ disappears. In both cases, the rest of the proof still holds, mutatis mutandis. %Since $\widetilde{Q}_{1}B_1=(\widetilde{Q}_{1}R) (R^TB_1)$ for all $R\in \mathrm{O}(q)$, we know that the choice of $\widetilde{Q}_{1}$ is arbitrary as long as~\eqref{eq:conservecolumns} holds (otherwise $\widetilde{\Delta}$ can not solve~\eqref{eq:distproblem}). 
In view of \eqref{eq:paramgeogeneral}, $\mathrm{Exp}_{\beta,U_N}(\Delta)$ holds if and only if
\begin{equation}
	\label{eq:Q2vanishing}
	\begin{bmatrix}
	U_N^T\widetilde{U}_N\\
	\widehat{Q}_1^T\widetilde{U}_N\\
	\widehat{Q}_2^T\widetilde{U}_N\\
	\end{bmatrix} = \exp\begin{bmatrix}
	2\beta A&-B^T R\\
	R^TB&0
	\end{bmatrix}I_{2p\times p}\exp((1-2\beta)A).
\end{equation}
By \eqref{eq:conservecolumns}, it holds that $\widehat{Q}_{1}=\left[\begin{smallmatrix}
\breve{Q}_1\\
0_{N\times p}
\end{smallmatrix}\right]$. 
Moreover, $\widehat{Q}_{2}^T U_N = \widehat{Q}_{2}^T \widetilde{U}_N=0$, or equivalently, $\mathrm{col}(\widehat{Q}_{2})\subseteq \mathrm{col}([U_N\ \widetilde{U}_N])^\perp$. 
%Since $\widetilde{Q}_{2}^T \widetilde{U}_N=0$, $\widetilde{Q}_{2}$ does not influence the constraint $\mathrm{Exp}_{\beta,U_N}(\widetilde{\Delta})=\widetilde{U}_N$, neither does it influence $\|\widetilde{\Delta}\|_\beta$.
Hence, $\eqref{eq:Q2vanishing}$ still holds if $\widehat{Q}_2$ is replaced by any orthogonal basis $\overline{Q}_2$ of $\mathrm{col}([U_N\ \widetilde{U}_N])^\perp$. In particular, since $n\geq 2p$, we can take $\overline{Q}_2 =\left[\begin{smallmatrix}
\breve{Q}_2\\
0_{N\times p-q}
\end{smallmatrix}\right]$.
%Therefore, $\widetilde{Q}_{2}$ is the arbitrary basis of any subspace of dimension $p-q$ in $\mathrm{col}([U_N\ \widetilde{U}_N])^\perp$. %\footnote{This insensitivity of the Riemannian logarithm $\widetilde{\Delta}$ to the choice of $\widetilde{Q}_2$ implies that, if $B_2\neq0$, the minimal geodesic is not unique and $\widetilde{U}$ belongs to the cut locus of $U$ as stated in~\cite[Prop.~A.1]{mataigne2024efficient}.} 
%Since $n\geq 2p$, $\mathrm{col}([U_N\ \widetilde{U}_N])^\perp$ admits a subspace of dimension $p-q$ whose basis takes the form $\widetilde{Q}_{2}=\left[\begin{smallmatrix}Q_2\\0_{N\times p-q}\end{smallmatrix}\right]$ with $Q_2\in\mathrm{St}(n,p-q)$. 
Therefore, $\left[\begin{smallmatrix}
\breve{\Delta}\\
0_{N\times p}
\end{smallmatrix}\right] := \left[\begin{smallmatrix}
U\\
0_{N\times p}
\end{smallmatrix}\right] A +\left[\begin{smallmatrix}
\breve{Q}_1&\breve{Q}_2\\
0_{N\times q}& 0_{N\times p-q}
\end{smallmatrix}\right]R^TB$ realizes the $\min$ of~\eqref{eq:distproblem}, which implies that %$\Delta := U A +[Q_{1} \ Q_{2}]R^T B$ realizes
\begin{align*}
d_\beta(U_N,\widetilde{U}_N) &= \min\{\|\Delta\|_\beta\ |\ \widetilde{\Delta}\in \mathrm{T}_{U_N} \mathrm{St}(n+N,p),\ \mathrm{Exp}_{\beta,U_N}(\Delta) = \widetilde{U}_N, \Delta=\left[\begin{smallmatrix}\breve{\Delta}\\0_{N\times p}\end{smallmatrix}\right] \}\\
&=\min\{\|\breve{\Delta}\|_\beta\ |\ \breve{\Delta} \in \mathrm{T}_{U} \mathrm{St}(n,p),\ \mathrm{Exp}_{\beta,U}(\breve{\Delta}) = \widetilde{U}\}\\
&=:d_{\beta}(U,\widetilde{U}).
\end{align*}
This concludes the proof.
%Noticing $\|\Delta\|_\beta = \|\widetilde{\Delta}\|_\beta$ is sufficient to conclude $d_{\beta}(U,\widetilde{U})=d_{\beta}(U_{N},\widetilde{U}_{N})$.
\end{proof}

The condition $n\geq2p$ from \cref{lem:augmentedrows} spreads across \cref{thm:linearbound}, \ref{thm:Euclideandiameter}, \ref{thm:reachlinearbound}, \ref{thm:Euclidean_minimality} and~\ref{thm:generalizedbounds}. This condition could be neglected if \cref{conj:rows_beta} holds.
\begin{conj}\label{conj:rows_beta}
If $\beta\leq 1$, \cref{lem:augmentedrows} holds for all pairs $(n,p)$ with $n>p$ and, if $n=p$, on each connected component of $\mathrm{St}(n,n)$ separately.
\end{conj}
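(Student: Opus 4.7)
The plan is to establish the nontrivial direction $d_\beta(U_N,\widetilde{U}_N)\geq d_\beta(U,\widetilde{U})$; the reverse inequality holds immediately by zero-padding any minimizing geodesic of $\mathrm{St}(n,p)$ into $\mathrm{St}(n+N,p)$. Following the proof of Lemma~5.5, I would choose the orthogonal completion $(U_N)_\perp = \left[\begin{smallmatrix} U_\perp & 0 \\ 0 & I_N\end{smallmatrix}\right]$ so that any $\Delta_N\in T_{U_N}\mathrm{St}(n+N,p)$ with $\mathrm{Exp}_{\beta,U_N}(\Delta_N)=\widetilde{U}_N$ decomposes as $\Delta_N = U_N A + (U_N)_\perp\left[\begin{smallmatrix} B\\C\end{smallmatrix}\right]$, with $\|\Delta_N\|_\beta^2 = \beta\|A\|_\mathrm{F}^2+\|B\|_\mathrm{F}^2+\|C\|_\mathrm{F}^2$. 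The obstruction identified in Lemma~5.5 when $n<2p$ is that the endpoint constraint can force $C\neq 0$. The goal is to trade the cost $\|C\|_\mathrm{F}^2$ for an increase in the $A$-contribution, keeping the endpoint fixed; since the $A$-contribution is weighted by $\beta$, raising $\|A\|_\mathrm{F}^2$ by some $\eta$ costs $\beta\eta$, which for $\beta\leq 1$ can be covered by the release of $\|C\|_\mathrm{F}^2$.

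To make this precise I would exploit the $\mathrm{O}(N)$-action on the last $N$ coordinates that fixes both $U_N$ and $\widetilde{U}_N$: it rotates $C$ freely without affecting the endpoints or the $\beta$-norm, so one may assume $C$ is in a canonical form such as the top rows of a truncated SVD. Then, using the block structure of~\eqref{eq:paramgeogeneral} together with a CS decomposition of $[U\ \widetilde{U}]\in\mathbb{R}^{n\times 2p}$ reduced to principal angles, one should be able to construct a deformation $s\mapsto(A(s),B(s),C(s))$ along which the endpoint is preserved, $C(1)=0$, and the $\beta$-norm is non-increasing. The monotonicity of the norm along $s$ is the place where $\beta\leq 1$ must enter, presumably through an elementary Cauchy--Schwarz-type inequality on the derivative of $\|\Delta_N(s)\|_\beta^2$, playing a role dual to the inequality $1-\beta\leq\beta$ used in the proof of Theorem~5.8.

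The main obstacle, in my view, is the nonlinear mixing of $A$, $B$, and $C$ by the matrix exponential in~\eqref{eq:paramgeogeneral}, which makes a globally defined deformation $(A(s),B(s),C(s))$ difficult to write down; a naive BCH expansion yields only a local statement. A plausible workaround is to first treat the Euclidean case $\beta=1$, where the factor $\exp((1-2\beta)A)$ is trivial and the exponential reduces to a single skew-symmetric block so that the endpoint map becomes purely multiplicative in $\mathrm{O}(n+N)$, and then to extend to $\beta\in(0,1)$ by a compactness/continuity argument in $\beta$ combined with the bilipschitz equivalence of Corollary~3.3. The separate treatment of $\mathrm{St}(n,n)=\mathrm{O}(n)$ is forced by the fact that $\mathrm{O}(n)$ has two connected components that lie at infinite $\beta$-distance from each other, whereas $\mathrm{St}(n+N,n)$ is connected for $N\geq 1$; augmentation can only preserve distance within a component, and there the same deformation strategy should apply.
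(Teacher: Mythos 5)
Be aware that this statement is posed as an open conjecture in the paper (``Although we could not prove this conjecture\dots''), so there is no proof of it to compare against; what you wrote is a strategy sketch, and it has gaps that would stop a referee. Your proposed base case is mis-identified: the factor $\exp((1-2\beta)A)$ in~\eqref{eq:paramgeogeneral} becomes trivial at $\beta=\frac{1}{2}$, not at $\beta=1$; for $\beta=1$ it is $\exp(-A)\neq I$, so the endpoint map is not ``purely multiplicative in $\mathrm{O}(n+N)$'' there. More seriously, the transfer from a single $\beta_0$ to the whole interval $(0,1]$ via ``continuity plus \cref{cor:allbounds}'' cannot work: the bilipschitz bounds give only $\min\{1,\sqrt{\beta/\beta_0}\}\,d_{\beta_0}\leq d_\beta\leq\max\{1,\sqrt{\beta/\beta_0}\}\,d_{\beta_0}$, which, chained through the hypothetical $\beta_0$-equality, yields something strictly weaker than the needed $d_\beta(U_N,\widetilde{U}_N)\geq d_\beta(U,\widetilde{U})$; and since the easy direction already gives $d_\beta(U_N,\widetilde{U}_N)\leq d_\beta(U,\widetilde{U})$ for every $\beta$, continuity of the difference in $\beta$ tells you nothing about its sign away from $\beta_0$.

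The central step --- a deformation $s\mapsto(A(s),B(s),C(s))$ preserving the endpoint constraint, driving $C$ to zero, with nonincreasing cost $\beta\|A\|_\mathrm{F}^2+\|B\|_\mathrm{F}^2+\|C\|_\mathrm{F}^2$ --- is asserted, not constructed, and you concede that the nonlinear coupling through the two matrix exponentials blocks a BCH approach beyond a local statement. Note also that the $\mathrm{O}(N)$-averaging and the choice of $(U_N)_\perp$ are precisely what the proof of \cref{lem:augmentedrows} already uses; there, however, the hypothesis $n\geq 2p$ lets the $C$-block be removed by rotating the orthogonal complement inside $\mathrm{col}([U_N\ \widetilde{U}_N])^\perp$ without touching $A$ or $B$ at all, so no trade is needed. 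The whole difficulty of the conjecture lives in the regime $n<2p$, where that escape is unavailable, and your sketch leaves it intact: the heuristic ``$\beta\leq 1$ makes the trade affordable'' is never turned into an inequality, nor is it shown that any endpoint-preserving deformation reaching $C=0$ exists for general pairs $(U,\widetilde{U})$.
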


The second milestone highlights the existence of a solution to the nonlinear system of matrix equations~\eqref{eq:system}. These equations are essential conditions arising in the proof of~\cref{thm:linearbound}.

\begin{lem}\label{lem:condforsyst}
    \it
    Let $n\geq 3p$. For all $U,\widetilde{U}\in \mathrm{St}(n,p)$ and given $\m{R}:=\frac{\widetilde{U}-U}{2}$, the system 
    \begin{equation}\label{eq:system}
        \begin{cases}
            U^T\m{X}+\m{X}^TU= \m{0}\\
            \widetilde{U}^T\m{X}+ \m{X}^T\widetilde{U}= \m{0}\\
            \m{X}^T\m{X} =\m{R}^T\m{R}
        \end{cases} 
    \end{equation} admits at least a solution $\m{X}\in \mathbb{R}^{n\times p}$.
\end{lem}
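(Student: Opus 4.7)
The plan is to construct a solution $X$ that satisfies the stronger orthogonality conditions $U^TX = 0$ and $\widetilde{U}^TX = 0$; this automatically fulfills the first two equations of~\eqref{eq:system}, leaving only $X^TX = R^TR$ to arrange. The key observation is that the hypothesis $n\geq 3p$ creates enough room in the orthogonal complement of $\mathrm{col}([U\ \widetilde{U}])$ to accommodate the $p$ columns of $X$.

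First, I would note that $\mathrm{rank}([U\ \widetilde{U}])\leq 2p$, so the orthogonal complement of $\mathrm{col}([U\ \widetilde{U}])$ in $\mathbb{R}^n$ has dimension at least $n-2p\geq p$. Let $W\in \mathbb{R}^{n\times k}$ be a matrix whose columns form an orthonormal basis of this orthogonal complement, so $W^TW = I_k$, $U^TW = 0$, $\widetilde{U}^TW = 0$, and $k\geq p$.

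Next, I would take a symmetric square root $S = (R^TR)^{1/2}\in\mathbb{R}^{p\times p}$ of the positive semidefinite matrix $R^TR$ (which exists, e.g., via spectral decomposition). Setting
\begin{equation*}
X := W \begin{bmatrix} S \\ 0_{(k-p)\times p} \end{bmatrix} \in \mathbb{R}^{n\times p},
\end{equation*}
the identities $U^TW=0$ and $\widetilde{U}^TW = 0$ immediately give $U^TX = 0$ and $\widetilde{U}^TX = 0$, so the first two equations of~\eqref{eq:system} hold in the strong form $U^TX = 0$ and $\widetilde{U}^TX = 0$. Finally, using $W^TW = I_k$ and the symmetry of $S$,
\begin{equation*}
X^TX = \begin{bmatrix} S^T & 0 \end{bmatrix} W^T W \begin{bmatrix} S \\ 0 \end{bmatrix} = S^2 = R^TR,
\end{equation*}
which is the third equation of~\eqref{eq:system}.

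The only real obstruction in this argument is the dimension count: the construction needs the orthogonal complement of $\mathrm{col}([U\ \widetilde{U}])$ to have dimension at least $p$, and this is exactly where the hypothesis $n\geq 3p$ enters (since in the worst case $\mathrm{col}(U)$ and $\mathrm{col}(\widetilde{U})$ are disjoint and $\mathrm{col}([U\ \widetilde{U}])$ has full dimension $2p$). Everything else is a direct verification.
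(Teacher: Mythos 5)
Your proof is correct and follows essentially the same route as the paper's: construct $X$ lying entirely in the orthogonal complement of $\mathrm{col}([U\ \widetilde{U}])$, which makes the first two equations hold in the strong form $U^TX=\widetilde{U}^TX=0$, and then use the symmetric square root $(R^TR)^{1/2}$ to satisfy the third. The paper simply picks a $p$-column orthonormal matrix $\widehat{U}$ directly and sets $X=\widehat{U}(R^TR)^{1/2}$, whereas you allow a wider $W$ and pad with zeros; this is a cosmetic difference.
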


\begin{proof}
    Since $n\geq3p$, there exists $\widehat{U}\in \mathrm{St}(n,p)$ such that $\widehat{U}^TU=\widehat{U}^T\widetilde{U}=\m{0}$. For all $\m{Y}\in\mathbb{R}^{p\times p}$,  $\m{X}:=\widehat{U}\m{Y}$ satisfies the first two equations. The third equation yields $\m{Y}^T\m{Y}=\m{R}^T\m{R}$ which admits at least one solution. For instance, take $\m{Y} = (\m{R}^T\m{R})^{\frac{1}{2}}$, which is a well-defined real solution since $\m{R}^T\m{R}$ is symmetric positive semidefinite.
\end{proof}

Gathering the results of~\cref{lem:augmentedrows} and~\cref{lem:condforsyst}, we can finally prove the upper bound (green dashed line in~\cref{fig:shootingfig}) on the Euclidean geodesic distance by the Frobenius distance in~\cref{thm:linearbound}.
\begin{thm}\label{thm:linearbound}
    \it
    Let $n\geq 2p$. For all $U, \widetilde{U} \in \mathrm{St}(n,p)$, $d_{\mathrm{E}}(U,\widetilde{U})\leq \frac{\pi}{2}\|\widetilde{U}-U\|_{\mathrm{F}}$.
\end{thm}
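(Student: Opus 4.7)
The plan is to construct an explicit curve from $U$ to $\widetilde{U}$ that lies on $\mathrm{St}(n,p)$ and has Euclidean length exactly $\frac{\pi}{2}\|\widetilde{U}-U\|_\mathrm{F}$; since the Euclidean geodesic distance is an infimum of lengths, this suffices. Geometrically, this corresponds to a half great-circle on the auxiliary hypersphere $\Sigma_{U,\widetilde{U}}$ of~\eqref{eq:littlesphere}: that hypersphere has radius $r=\|\widetilde{U}-U\|_\mathrm{F}/2$ and admits $U,\widetilde{U}$ as antipodal points, so a half great-circle between them has length $\pi r = \frac{\pi}{2}\|\widetilde{U}-U\|_\mathrm{F}$.

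First, I would reduce to the regime where \cref{lem:condforsyst} applies. Given only $n\geq 2p$, choose $N$ with $n+N\geq 3p$ and replace $U,\widetilde{U}$ by their augmentations $U_N,\widetilde{U}_N\in\mathrm{St}(n+N,p)$; \cref{lem:augmentedrows} gives $d_\mathrm{E}(U,\widetilde{U})=d_\mathrm{E}(U_N,\widetilde{U}_N)$, while augmenting with zeros preserves the Frobenius distance. Then, setting $C=(U_N+\widetilde{U}_N)/2$ and $R=(U_N-\widetilde{U}_N)/2$, invoke \cref{lem:condforsyst} to obtain a matrix $X$ satisfying $U_N^TX+X^TU_N=0$, $\widetilde{U}_N^TX+X^T\widetilde{U}_N=0$, and $X^TX=R^TR$.

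Second, I would consider the candidate curve
\begin{equation*}
\gamma(t)=C+\cos(t)R+\sin(t)X,\qquad t\in[0,\pi],
\end{equation*}
which satisfies $\gamma(0)=U_N$ and $\gamma(\pi)=\widetilde{U}_N$. Expanding $\gamma(t)^T\gamma(t)$ and using $C^TC+R^TR=I_p$ together with $C^TR+R^TC=0$ (both immediate from $U_N^TU_N=\widetilde{U}_N^T\widetilde{U}_N=I_p$), the identity $\gamma(t)^T\gamma(t)=I_p$ for all $t$ reduces exactly to the three conditions $C^TX+X^TC=0$, $R^TX+X^TR=0$, and $X^TX=R^TR$; the first two are the sum and difference of the two skew-symmetry equations furnished by \cref{lem:condforsyst}. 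Hence $\gamma$ lies in $\mathrm{St}(n+N,p)$.

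Third, I would compute the length. Since $\dot\gamma(t)=-\sin(t)R+\cos(t)X$, we have
\begin{equation*}
\|\dot\gamma(t)\|_\mathrm{F}^2=\sin^2(t)\|R\|_\mathrm{F}^2+\cos^2(t)\|X\|_\mathrm{F}^2-2\sin(t)\cos(t)\,\Tr(R^TX).
\end{equation*}
The condition $X^TX=R^TR$ gives $\|X\|_\mathrm{F}^2=\|R\|_\mathrm{F}^2=r^2$, and the skew-symmetry $R^TX=-X^TR$ forces $\Tr(R^TX)=0$, so $\|\dot\gamma(t)\|_\mathrm{F}\equiv r$ and $l_\mathrm{E}(\gamma)=\pi r=\frac{\pi}{2}\|\widetilde{U}-U\|_\mathrm{F}$. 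Combining with the reduction step yields the claim. The only genuine obstacle is the existence of $X$, which is precisely what \cref{lem:condforsyst} delivers after the $n\geq 2p\to n\geq 3p$ padding trick; the rest is a direct verification that the natural half-circle in $\Sigma_{U,\widetilde{U}}$ can be aligned with the Stiefel constraints.
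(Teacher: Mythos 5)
Your proof is correct and follows essentially the same route as the paper's: pad with zero rows via \cref{lem:augmentedrows} so that $n+N\geq 3p$, invoke \cref{lem:condforsyst} to obtain $X$, form the planar curve $\gamma(t)=C+\cos(t)R+\sin(t)X$ on $[0,\pi]$, verify via the three constraints that $\gamma$ stays on the Stiefel manifold, and integrate the constant speed $\|R\|_\mathrm{F}$ to get $\frac{\pi}{2}\|\widetilde U-U\|_\mathrm{F}$. The only cosmetic difference is that you derive the conditions $C^TX+X^TC=0$ and $R^TX+X^TR=0$ explicitly as the sum and difference of the two equations from \cref{lem:condforsyst}, whereas the paper states the equivalence between the $(C,R)$-form and the $(U,\widetilde U)$-form of the system directly; and you omit the paper's (non-essential) observation that $\gamma$ lies on $\Sigma_{U,\widetilde U}$.
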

\begin{proof}
Start by appending $N$ rows of zeros to $U$ and $\widetilde{U}$ such that $n+N\geq 3p$ and redefine $n\leftarrow n+N$ for simplicity. By~\cref{lem:augmentedrows}, the $\beta$-distances are left unchanged and the conclusion of~\cref{lem:condforsyst} holds. For $S\in \mathbb{R}^{n\times p}$, consider the curve $\gamma_S:[0,\pi]\rightarrow\mathbb{R}^{n\times p}$ defined by
\begin{equation}\label{eq:defofgamma}
    \gamma_S(t) := \m{C} + \cos(t)\m{R}+\sin(t)\m{S},
\end{equation}
where $\m{C}=\frac{U+\widetilde{U}}{2}$ and $\m{R} = \frac{U-\widetilde{U}}{2}$. The curve $\gamma_S$ and the geometric intuition underlying its expression are illustrated in~\cref{fig:sphericalcap}. We find sufficient conditions on $\m{S}$ such that $\gamma_S(t)$ lies on $\mathrm{St}(n,p)$ for all $t\in\mathbb{R}$. By definition of $\gamma_S$, we have
\begin{align}
	\label{eq:gammaonstiefel}
    \gamma_S(t)^T\gamma_S(t) &= \m{C}^T\m{C} +\cos(t)(\m{C}^T\m{R}+\m{R}^T\m{C})+\sin(t)(\m{C}^T\m{S}+\m{S}^T\m{C})\\
    \nonumber
    &\quad\quad\quad\quad+\cos^2(t)(\m{R}^T\m{R}) + \sin^2(t)(\m{S}^T\m{S}).
    %&=\m{C}^T\m{C} +\m{R}^T\m{R} \quad \text{ If } \m{C}^T\m{R}+\m{R}^T\m{C}=\m{0},\  \m{C}^T\m{S} +\m{S}^T\m{C}=\m{0}\text{ and } S^TS = R^T R. \\
    %&= \m{I}_p.
\end{align}
Observe that $C^T R+R^T C=0$, hence the $\cos(t)$ term vanishes. Let $\m{S}\in \mathbb{R}^{n\times p}$ satisfy
\begin{equation}\label{eq:systtosolve}
    \begin{cases}
        \m{C}^T\m{S}+\m{S}^T\m{C}= \m{0}\\
        \m{R}^T\m{S}+ \m{S}^T\m{R}= \m{0}\\
        \m{S}^T\m{S} =\m{R}^T\m{R}
    \end{cases} \text{or equivalently }\begin{cases}
        U^T\m{S}+\m{S}^TU= \m{0}\\
        \widetilde{U}^T\m{S}+ \m{S}^T\widetilde{U}= \m{0}\\
        \m{S}^T\m{S} =\m{R}^T\m{R}
    \end{cases}.
\end{equation}
%System~\eqref{eq:systtosolve} admits a solution since the conclusion of Lemma~\cref{lem:condforsyst} holds.
$S$ exists in view of~\cref{lem:condforsyst}. Then, \eqref{eq:gammaonstiefel} yields $\gamma_S(t)^T\gamma_S(t)=C^TC+R^TR = I_p$, i.e., $\gamma_S(t)\in \mathrm{St}(n,p)$ for all $t\in\mathbb{R}$. 
Moreover, $\gamma_S(t)$ also belongs to the hypersphere $\Sigma_{U,\widetilde{U}}$ (see~\eqref{eq:littlesphere}) for all $t\in\mathbb{R}$. % We recall $\Sigma_{U,\widetilde{U}}:=\left\{\m{X}\in \mathbb{R}^{n\times p}\ | \ \Tr\left((\m{X-\m{C}} )^T(\m{X-\m{C}} )\right) = r^2\right\},$
Indeed, we have
\begin{align*}
    \langle\gamma_S(t)-\m{C},\gamma_S(t)-\m{C}\rangle_\mathrm{F} &= \Tr\left((\cos(t)\m{R}+\sin(t)\m{S})^T(\cos(t)\m{R}+\sin(t)\m{S})\right)\\
    &=\Tr(\m{R}^T\m{R})\quad\quad\text{Since } \m{R}^T\m{S}+\m{S}^T\m{R} =\m{0}\text{ and } \m{S}^T\m{S}=\m{R}^T\m{R}.\\
    &=\left(\frac{1}{2}\|\widetilde{U}-U\|_{\mathrm{F}}\right)^2.
\end{align*}
In view of \eqref{eq:defofgamma}, $\gamma_S$ draws a half great-circle of Euclidean length $\frac{\pi}{2}\|\widetilde{U}-U\|_{\mathrm{F}}$ (see~\cref{fig:sphericalcap} for the geometric intuition). As an analytical corroboration, we have
\begin{equation*}
    l_{\mathrm{E}}(\gamma_S)=\int_0^\pi \|\dot{\gamma}_S(t)\|_\mathrm{F}
    \ \mathrm{d}t=\int_0^\pi \|\m{R}\|_\mathrm{F} \ \mathrm{d}t=\frac{\pi}{2}\|\widetilde{U}-U\|_{\mathrm{F}}.
\end{equation*}
By definition~\eqref{eq:geodesicdistance} of the distance, we have $d_{\mathrm{E}}(U,\widetilde{U})\leq l_\mathrm{E}(\gamma_S)=\frac{\pi}{2}\|\widetilde{U}-U\|_{\mathrm{F}}$. %we have \begin{equation*}
    %d_{\mathrm{E}}(U,\widetilde{U})\leq l_{\mathrm{E}}(\widetilde{\gamma})\ \text{for all curves $\widetilde{\gamma}:[0,1]\mapsto\mathrm{St}(n,p)$ with $\widetilde{\gamma}(0)=U$ and $\widetilde{\gamma}(1) = \widetilde{U}$.}
    %&=\leq \frac{\pi}{2}\|\widetilde{U}-U\|_{\mathrm{F}}
%\end{equation*}
%Hence, we have in particular $d_{\mathrm{E}}(U,\widetilde{U})\leq \frac{\pi}{2}\|\widetilde{U}-U\|_{\mathrm{F}}$.
\end{proof}
\begin{figure}
    \centering
    \includegraphics[width = 9cm]{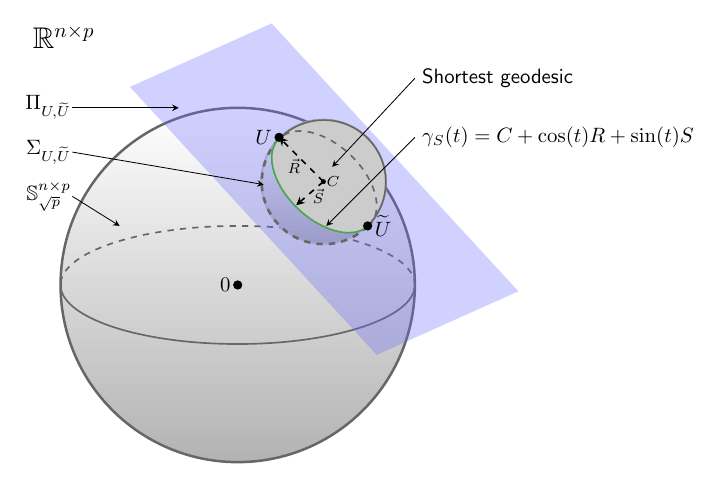}
    \caption{Abstract representation (correct for $n=3$, $p=1$) of the sets $\mathbb{S}_{\sqrt{p}}^{n\times p},\Sigma_{U,\widetilde{U}}$ and $\Pi_{U,\widetilde{U}}$ introduced in~\cref{sec:sets}. The planar curve $\gamma_S$ travelling at constant speed from $U$ to $\widetilde{U}$, of length $\frac{\pi}{2}\|\widetilde{U}-U\|_{\mathrm{F}}$ and remaining in $\mathrm{St}(n,p)\cap\Sigma_{U,\widetilde{U}}$, is represented as well .}
    \label{fig:sphericalcap}
\end{figure}
\cref{thm:linearbound} provides a first upper bound on $M_\mathrm{E}$, namely $\widehat{M}_\mathrm{E}:[0,2\sqrt{p}]\rightarrow \mathbb{R}:\delta\mapsto\frac{\pi}{2}\delta$.
\subsection{A nonlinear upper bound on the Euclidean geodesic distance by the Frobenius distance for nearby matrices} %Figure~\cref{fig:shootingfig} suggests that the linear bound is close to $M_\mathrm{E}$, except for nearby matrices, where the linear upper bound is not satisfying. 
As mentioned previously and numerically investigated in~\cref{fig:error_bound}, the relative error between the linear upper bound from~\cref{thm:linearbound} and $M_\mathrm{E}$ is large (up to $57\%$) for nearby matrices. We provide next the best upper bound $M_\mathrm{E}(\delta)$ for all $\delta\in[0,2]$ (where optimality is shown in~\cref{cor:reach_first_upper_bound}). 
\begin{thm}\label{thm:first_upper_bound}
\it
Let $n\geq2p$. For all $U,\widetilde{U}\in\mathrm{St}(n,p)$, if $\|\widetilde{U}-U\|_\mathrm{F}\leq 2$, then $d_\mathrm{E}(U,\widetilde{U})\leq 2\arcsin\left(\frac{\|\widetilde{U}-U\|_\mathrm{F}}{2}\right)$.
\end{thm}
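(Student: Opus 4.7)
My plan is to mirror the construction in the proof of \cref{thm:linearbound}: build a planar circular arc in $\mathbb{R}^{n\times p}$ that lies entirely in $\mathrm{St}(n,p)$ and joins $U$ to $\tilde{U}$, and bound $d_\mathrm{E}$ by its length. The difference is the radius. \cref{thm:linearbound} uses a half-arc of Frobenius radius $\delta/2$ on which $U$ and $\tilde{U}$ are antipodal, of length $\pi\delta/2$; here I target a circle of Frobenius radius~$1$ on which $U$ and $\tilde{U}$ subtend the smaller angle $\theta := 2\arcsin(\delta/2)$, yielding an arc of length exactly $2\arcsin(\delta/2)$. The hypothesis $\delta\leq 2$ is precisely what allows a chord of length~$\delta$ to fit across a unit-radius circle. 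As in \cref{thm:linearbound}, I first apply \cref{lem:augmentedrows} to augment $U,\tilde{U}$ with enough zero rows, leaving $d_\mathrm{E}$ unchanged while providing ambient room.

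Next, I parametrize $\gamma(t) = c + \cos(t)\hat{r} + \sin(t)\hat{s}$, setting $\hat{r} := U - c$ and determining $\hat{s}$ from $\gamma(\theta) = \tilde{U}$. Expanding $\gamma(t)^T\gamma(t) = I_p$ and matching coefficients of $1$, $\cos^2 t$, $\sin^2 t$, and the three cross terms yields the Stiefel conditions $\hat{r}^T\hat{r} = \hat{s}^T\hat{s}$, $c^Tc + \hat{r}^T\hat{r} = I_p$, and $c^T\hat{r}+\hat{r}^Tc = c^T\hat{s}+\hat{s}^Tc = \hat{r}^T\hat{s}+\hat{s}^T\hat{r} = 0$. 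Substituting the expressions for $\hat{r},\hat{s}$ and using $\sin(\theta/2)=\delta/2$ reduces all of these to a single prescription of the inner-product matrix,
\begin{equation*}
c^Tc = \Bigl(1 - \tfrac{2}{\delta^2}\Bigr)I_p + \tfrac{1}{\delta^2}(M+M^T),\qquad M := U^T\tilde{U},
\end{equation*}
together with the linear conditions $U^Tc + c^TU = \tilde{U}^Tc + c^T\tilde{U} = 2\,c^Tc$ (the condition $c^TR + R^Tc = 0$ is then automatic, with $R$ as in \cref{thm:linearbound}).

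The critical analytic point is that the prescribed $c^Tc$ is positive semidefinite: the identity $\Tr(M) = p - \delta^2/2$ together with $(M+M^T)/2 \preceq I_p$ (since the singular values of $M$ lie in $[0,1]$) forces every eigenvalue of $(M+M^T)/2$ to be at least $1 - \delta^2/2$, which makes every eigenvalue of $c^Tc$ nonnegative. Hence some $c\in\mathbb{R}^{n\times p}$ realizing this $c^Tc$ exists, and decomposing $c$ along $\mathrm{col}(U)$, $\mathrm{col}(\tilde{U})$, and their common orthogonal complement turns the linear conditions into a solvable matrix system once $n$ is sufficiently large---this existence step plays the role of \cref{lem:condforsyst} and is the main technical obstacle I expect. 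Once $c$ is constructed, a direct trace computation gives $\Tr(c^Tc) = p-1$, whence $\|\hat{r}\|_\mathrm{F} = \|\hat{s}\|_\mathrm{F} = 1$ and the cross-term condition $\hat{r}^T\hat{s} + \hat{s}^T\hat{r} = 0$ yields $\|\dot\gamma(t)\|_\mathrm{F} = 1$ for every $t$. Thus $\gamma\colon[0,\theta]\to\mathrm{St}(n,p)$ is a unit-speed curve from $U$ to $\tilde{U}$ of length $\theta = 2\arcsin(\delta/2)$, and the definition of $d_\mathrm{E}$ gives the claim.
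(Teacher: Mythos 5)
Your approach is a genuinely different route from the paper's. The paper invokes the normal form of Zimmermann and Stoye~\cite[Cor.~3.4]{zimmermann2024injectivity}: every Euclidean geodesic on $\mathrm{St}(n,p)$ embeds isometrically into $\mathbb{R}^{2p^2}$ as $t\mapsto(a_i\cos b_it,\,a_i\sin b_it)_i$; the curvature bound $\sum_i a_i^2b_i^4\leq 1$ then forces $|b_i|\leq 1$, and the inequality $|\sin(bt/2)|\geq|b|\sin(t/2)$ for $|b|\leq 1$, $t\in[0,\pi]$ gives $\|\widetilde U-U\|_\mathrm{F}^2\geq 4\sin^2(d_\mathrm{E}/2)$. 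You instead mirror the explicit-curve construction of \cref{thm:linearbound}: build a planar arc of Frobenius radius~$1$ lying on $\mathrm{St}(n,p)$ from $U$ to $\widetilde U$ and bound $d_\mathrm{E}$ by its length $2\arcsin(\delta/2)$. What you spell out is correct: the matching conditions, the resulting prescription of $c^Tc$, the trace identity $\Tr(c^Tc)=p-1$, the unit-speed computation, and in particular the positivity argument --- from $\Tr(M)=p-\delta^2/2$ and $(M+M^T)/2\preceq I_p$ it follows that each eigenvalue of $(M+M^T)/2$ is at least $(p-\delta^2/2)-(p-1)=1-\delta^2/2$, so $c^Tc\succeq 0$. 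Your construction, if completed, would also give a geometric reason why the bound is attained by the single-column flip $\gamma_1$ of \cref{thm:Euclidean_minimality}, which is exactly such a unit-radius circle with $c^Tc=\mathrm{diag}(0,I_{p-1})$.

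However, the existence of the center $c$ is a genuine gap, and it is harder than \cref{lem:condforsyst}, which you invoke by analogy. There the linear constraints $U^TX+X^TU=0$ and $\widetilde U^TX+X^T\widetilde U=0$ are \emph{homogeneous} and solved by $X=\widehat UY$ with $\widehat U$ orthogonal to $\mathrm{col}[U\ \widetilde U]$. Here the constraints $U^Tc+c^Tu=\widetilde U^Tc+c^T\widetilde U=2c^Tc=2P$ are \emph{inhomogeneous}, so $c$ must project nontrivially onto $\mathrm{col}[U\ \widetilde U]$; for example the scalar ansatz $c=\alpha U+\beta\widetilde U+\widehat UZ$ forces $\alpha=1-2/\delta^2$ from the $I_p$-coefficient and $\alpha=2/\delta^2$ from the $M$-coefficient, consistent only at $\delta=2$. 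Existence of $c$ amounts to finding skew-symmetric $S_1,S_2$ making the $3p\times 3p$ Gram matrix
\begin{equation*}
\begin{bmatrix}I_p & M & P+S_1\\ M^T & I_p & P+S_2\\ P-S_1 & P-S_2 & P\end{bmatrix}
\end{equation*}
positive semidefinite, a feasibility question that does not reduce to a single matrix square root. You correctly flag this as the main obstacle; as written, the proposal is incomplete at exactly the step that carries the theorem's content.
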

\begin{proof}
We rely on the property from \cite[Cor.~3.4]{zimmermann2024injectivity}. The authors showed that for every Euclidean geodesic on $\mathrm{St}(n,p)$, there is $a_i,b_i\in\mathbb{R}$ with $i=1,...,p^2$ such that it can be embedded in the following normal form:
\begin{equation*}
	\gamma(t)=(a_1\cos(b_1 t), a_1\sin(b_1 t), ..., a_{p^2}\cos(b_{p^2} t), a_{p^2}\sin(b_{p^2} t) )^T \in\mathbb{R}^{2p^2}.
\end{equation*}
The transformation mapping curves from $(\mathrm{St}(n,p),\langle\cdot,\cdot\rangle_\mathrm{E})$ into $(\mathbb{R}^{2p^2},\langle\cdot,\cdot\rangle_\mathrm{F})$ is an isometric embedding, i.e., their length is conserved.
W.l.o.g., if we consider unit-speed geodesics, $\|\dot{\gamma}(t)\|_\mathrm{F}^2 =\sum_{i=1}^{p^2} a_i^2 b_i^2 =1$, the curvature of $\gamma(t)$ is bounded from above by~$1$ for all $t\in\mathbb{R}$, i.e., $\|\ddot{\gamma}(t)\|_\mathrm{F}=\sum_{i=1}^{p^2} a_i^2 b_i^4\leq 1$. This implies $|b_i|\leq 1$ \cite[Thm.~3.5]{zimmermann2024injectivity}. Since $\gamma$ has unit-speed, for all time intervals $[0,t^*]$, we have $l_\mathrm{E}(\gamma) = \int_{0}^{t^*} \|\dot{\gamma}(t)\|_\mathrm{F}\ \mathrm{d}t= t^*$. Let $U\simeq\gamma(0)$ and $\widetilde{U}\simeq\gamma(t^*)$ with $\|\widetilde{U}-U\|_\mathrm{F}\leq 2$ and let $\gamma$ be a minimal geodesic ($t^*=d_\mathrm{E}(U,\widetilde{U})$). It follows from the previous properties that 
\begin{align*}
	\|\widetilde{U}-U\|_\mathrm{F}^2 &= \sum_{i=1}^{p^2}a_i^2\left[(1-\cos(b_i t^*))^2+\sin(b_i t^*)^2\right]\\
										&= 4  \sum_{i=1}^{p^2} a_i^2 \sin^2\left(\frac{b_i t^*}{2}\right)\\
										&\geq 4\sin^2\left(\frac{t^*}{2}\right) \sum_{i=1}^{p^2} a_i^2 b_i^2\quad\quad\text{See below.}\\
										&=4 \sin^2\left(\frac{d_\mathrm{E}(U,\widetilde{U})}{2}\right)\quad\quad\ \text{ Unit-speed minimizing geodesic}.
\end{align*}
We leveraged that, if $ |b|\leq 1$, then $|\sin(b\frac{t}{2})|\geq |b|\sin(\frac{t}{2})$ for all $t\in[0,\pi]$ and $t^*\leq\pi $ follows by~\cref{thm:linearbound}. Finally, since $\|\widetilde{U}-U\|_\mathrm{F}\leq 2$, %, this yields $t^*\leq 2\arcsin(\frac{\|\widetilde{U}-U\|_\mathrm{F}}{2})$. Since $l_\mathrm{E}(\gamma)=t^*$ and by definition~\cref{eq:geodesicdistance}, 
we have
\begin{equation}
\nonumber
	d_\mathrm{E}(U,\widetilde{U})\leq 2\arcsin\left(\frac{\|\widetilde{U}-U\|_\mathrm{F}}{2}\right).
\end{equation}
This concludes the proof.
\end{proof}
In view of~\cref{thm:linearbound} and~\cref{thm:first_upper_bound}, we define
\begin{equation}\label{eq:M_hat_E}
\widehat{M}_\mathrm{E}:[0,2\sqrt{p}]\rightarrow\mathbb{R}:\delta\mapsto\begin{cases}
2\arcsin\left(\frac{\delta}{2}\right) \text{ if } \delta \leq 2,\\
\frac{\pi}{2}\delta\quad \quad\quad \quad\text{ otherwise}.
\end{cases}
\end{equation}
\subsection{The Euclidean diameter of Stiefel} \label{sec:diameter} In~\cref{fig:shootingfig}, the lower and upper bounds cross at the upper right corner of the plot. This provides the diameter of the Stiefel manifold endowed with the Euclidean metric ($\beta=1$). This is shown next.

    \begin{thm}\label{thm:Euclideandiameter}
    \it
        Let $n\geq 2p$. The diameter of $(\mathrm{St}(n,p), \langle\cdot,\cdot\rangle_\mathrm{E})$ is $\pi\sqrt{p}$. Moreover, for all $U,\widetilde{U}\in \mathrm{St}(n,p)$, we have $d_{\mathrm{E}}(U,\widetilde{U})=\pi\sqrt{p}$ if and only if $\widetilde{U}=-U$.
    \end{thm}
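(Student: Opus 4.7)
The plan is to sandwich $d_{\mathrm{E}}(U,\widetilde{U})$ between the lower bound of \cref{thm:Euclideanlowerbound} and the linear upper bound of \cref{thm:linearbound}, then exploit the fact that both bounds are monotone functions of $\|U-\widetilde{U}\|_{\mathrm{F}}\in[0,2\sqrt{p}]$, with the maximum of the Frobenius distance being attained iff $\widetilde{U}=-U$ by \cref{lem:frobenius_diameter}.

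First, I would establish the upper bound on the diameter. For any $U,\widetilde{U}\in\mathrm{St}(n,p)$, \cref{thm:linearbound} (whose hypothesis $n\geq 2p$ is assumed) gives
\begin{equation*}
d_{\mathrm{E}}(U,\widetilde{U})\leq \tfrac{\pi}{2}\|\widetilde{U}-U\|_{\mathrm{F}}\leq \tfrac{\pi}{2}\cdot 2\sqrt{p}=\pi\sqrt{p},
\end{equation*}
using \cref{lem:frobenius_diameter} for the second inequality. This shows $\mathrm{diam}(\mathrm{St}(n,p),\langle\cdot,\cdot\rangle_{\mathrm{E}})\leq\pi\sqrt{p}$.

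Next, I would show the diameter is actually attained and that this happens exactly for antipodal points. Take any $U\in\mathrm{St}(n,p)$ and let $\widetilde{U}=-U$, so that $\|\widetilde{U}-U\|_{\mathrm{F}}=2\sqrt{p}$. By \cref{thm:Euclideanlowerbound},
\begin{equation*}
d_{\mathrm{E}}(U,-U)\geq 2\sqrt{p}\arcsin(1)=\pi\sqrt{p},
\end{equation*}
which together with the upper bound from the previous step forces $d_{\mathrm{E}}(U,-U)=\pi\sqrt{p}$. Hence the diameter equals $\pi\sqrt{p}$ and is realised at every antipodal pair.

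Conversely, suppose $d_{\mathrm{E}}(U,\widetilde{U})=\pi\sqrt{p}$. Then \cref{thm:linearbound} yields $\tfrac{\pi}{2}\|\widetilde{U}-U\|_{\mathrm{F}}\geq\pi\sqrt{p}$, i.e.\ $\|\widetilde{U}-U\|_{\mathrm{F}}\geq 2\sqrt{p}$; combined with the general upper bound $\|\widetilde{U}-U\|_{\mathrm{F}}\leq 2\sqrt{p}$ from \cref{lem:frobenius_diameter}, equality must hold in the triangle inequality $\|\widetilde{U}-U\|_{\mathrm{F}}\leq\|U\|_{\mathrm{F}}+\|\widetilde{U}\|_{\mathrm{F}}$, and since $\|U\|_{\mathrm{F}}=\|\widetilde{U}\|_{\mathrm{F}}=\sqrt{p}$ this forces $\widetilde{U}=-U$.

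There is essentially no obstacle here: the two bounds derived earlier cross precisely at $\|U-\widetilde{U}\|_{\mathrm{F}}=2\sqrt{p}$ and the argument reduces to reading off this crossing. The only subtle point is the reverse direction of the ``if and only if'', which is a direct consequence of the equality case in the triangle inequality for the Frobenius norm.
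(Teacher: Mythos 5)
Your proof is correct and uses essentially the same ingredients and structure as the paper's own proof: the linear upper bound (\cref{thm:linearbound}) together with $\|U-\widetilde{U}\|_{\mathrm{F}}\leq 2\sqrt{p}$ for the diameter upper bound, the arcsine lower bound (\cref{thm:Euclideanlowerbound}) at the antipodal pair for attainment, and the equality case of the Frobenius triangle inequality for the reverse direction of the characterisation. The only cosmetic difference is that the paper phrases the final implication contrapositively ($\widetilde{U}\neq -U \Rightarrow \|U-\widetilde{U}\|_{\mathrm{F}}<2\sqrt{p}\Rightarrow d_{\mathrm{E}}<\pi\sqrt{p}$) whereas you argue it directly; these are equivalent.
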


\begin{proof} For all $U,\widetilde{U}\in \mathrm{St}(n,p)$, since $\|U-\widetilde{U}\|_\mathrm{F}\leq2\sqrt{p}$, we know by~\cref{thm:linearbound} that $d_{\mathrm{E}}(U,\widetilde{U})\leq\frac{\pi}{2}2\sqrt{p}=\pi\sqrt{p}$. Moreover,~\cref{thm:Euclideanlowerbound} yields the lower bound 
\begin{equation*}
    d_{\mathrm{E}}(U,-U)\geq2\sqrt{p}\arcsin\left(\frac{\|2U\|_{\mathrm{F}}}{2\sqrt{p}}\right)=\pi\sqrt{p}.
\end{equation*}
It follows that $d_{\mathrm{E}}(U,-U)=\pi\sqrt{p}$ and the diameter of $(\mathrm{St}(n,p),\langle\cdot,\cdot\rangle_\mathrm{E})$ is $\pi\sqrt{p}$. Moreover, for all $\widetilde{U}\neq -U$, $\|U-\widetilde{U}\|_\mathrm{F}<2\sqrt{p}$ and thus $d_{\mathrm{E}}(U,\widetilde{U})<\pi\sqrt{p}$. Therefore, $d_{\mathrm{E}}(U,\widetilde{U})=\pi\sqrt{p}$ if and only if $\widetilde{U}=-U$.
\end{proof}
We do not have at this time a proof for the diameter of the Stiefel manifold for other choices of $\beta$-metrics.
\begin{comment}
More intuitively, we see on Figure~\cref{fig:domaineuclidean} that the lower and upper bounds cross at $(\|\widetilde{U}-U\|_{\mathrm{F}}, \ d_{\mathrm{E}}(U,\widetilde{U})) = (2\sqrt{p},\pi\sqrt{p})$, which corresponds to $\widetilde{U}=-U$. We generate random samples on $\mathrm{St}(n,p)$ and we compute the Euclidean geodesic distance $d_\mathrm{E}$ ($\beta = 1$) using~\cite[Algorithm~1]{ZimmermannRalf22}.
\begin{figure}[H]
    \centering
    \includegraphics[width=9cm]{Images/shootingmethodcorroboration5_4.pdf}
    \caption{Euclidean geodesic distances $d_{\mathrm{E}}$ ($\beta=1$) of $20000$ random samples on $\mathrm{St}(5,4)$.}
    \label{fig:domaineuclidean}
\end{figure}
\end{comment}
\subsection{Attaining the linear upper bound on the Euclidean geodesic distance by the Frobenius distance}\label{sec:reachingupperbound}

Looking back at~\cref{fig:shootingfig}, we just showed that the linear upper bound (dashed line) is attained at the two points where it meets the lower bound (solid line), i.e., $\widehat{M}_\mathrm{E}(\delta)=M_\mathrm{E}(\delta)$ at $\delta=0$ and $\delta=2\sqrt{p}$. However,~\cref{fig:shootingfig} suggests the existence of other points where the upper bound is attained, specifically at $\delta=2\sqrt{k}$ for $k=1,...,p-1$. We exhibit $2^p$ pairs ($U,\widetilde{U}$) achieving those points. 

Recall the definition~\eqref{eq:planeofcap} of the hyperplane $\Pi_{U,\widetilde{U}}$. $\Pi_{U,\widetilde{U}}$ divides $\mathbb{S}^{n\times p}_{\sqrt{p}}$ in two sets, as illustrated in~\cref{fig:sets}. We term \emph{open spherical cap} the set $\{X\in\mathbb{S}^{n\times p}_{\sqrt{p}}\ | \langle X, C\rangle_\mathrm{F} >\langle U, C\rangle_\mathrm{F}\}$.~\cref{lem:outsideball} shows that if a curve never enters the open spherical cap, its length can be bounded from below by the linear upper bound from~\cref{thm:linearbound}. 

%\begin{tcolorbox}
    \begin{lem}\label{lem:outsideball}
        \it
        For all $U,\widetilde{U}\in \mathrm{St}(n,p)$ with $\widetilde{U}\neq -U$, all $\gamma:[0,1]\rightarrow\mathrm{St}(n,p)$ going from $U$ to $\widetilde{U}$, if $\langle \gamma(t), C\rangle_\mathrm{F}\leq \langle U, C\rangle_\mathrm{F}$ for all $t\in [0,1]$, then $l_\mathrm{E}(\gamma)\geq \frac{\pi}{2}\|\widetilde{U}-U\|_\mathrm{F}$.
    \end{lem}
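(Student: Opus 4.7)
The plan is to radially project $\gamma$ onto a sphere of radius $r:=\tfrac{1}{2}\|\widetilde{U}-U\|_\mathrm{F}$ centered at $C$, so that the problem reduces to bounding the length of a curve between antipodal points of that sphere. The key observation is that the hypothesis forces $\gamma(t)$ to stay outside the open ball of radius $r$ around $C$, and that radial projection onto a sphere is length-nonincreasing for curves lying outside it. The case $\widetilde{U}=U$ is trivial (the right-hand side is $0$), so I assume $r>0$.

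First I would record two elementary identities that exploit $\|U\|_\mathrm{F}^2=\|\widetilde{U}\|_\mathrm{F}^2=p$: since $\langle C,R\rangle_\mathrm{F}=\tfrac{1}{4}(\|U\|_\mathrm{F}^2-\|\widetilde{U}\|_\mathrm{F}^2)=0$, the orthogonal decomposition $U=C+R$ gives $\|C\|_\mathrm{F}^2+r^2=p$, and a direct computation yields $\langle U,C\rangle_\mathrm{F}=\tfrac{1}{2}(p+\langle U,\widetilde{U}\rangle_\mathrm{F})=\|C\|_\mathrm{F}^2$. Setting $\eta(t):=\gamma(t)-C$ and using $\|\gamma(t)\|_\mathrm{F}^2=p$ together with the hypothesis $\langle\gamma(t),C\rangle_\mathrm{F}\leq\langle U,C\rangle_\mathrm{F}=\|C\|_\mathrm{F}^2$, I obtain
\begin{equation*}
\|\eta(t)\|_\mathrm{F}^2 = p-2\langle\gamma(t),C\rangle_\mathrm{F}+\|C\|_\mathrm{F}^2 \geq p-\|C\|_\mathrm{F}^2 = r^2.
\end{equation*}
Thus $\eta(t)$ stays outside the open Frobenius ball of radius $r$ about the origin.

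Next, define the radial projection $\hat\eta(t):=r\,\eta(t)/\|\eta(t)\|_\mathrm{F}$, a curve on $\{X\in\mathbb{R}^{n\times p}:\|X\|_\mathrm{F}=r\}$ whose endpoints are $\hat\eta(0)=R$ and $\hat\eta(1)=-R$, antipodal on this sphere. Writing $\eta(t)=\rho(t)\omega(t)$ with $\rho:=\|\eta\|_\mathrm{F}\geq r>0$ and $\omega:=\eta/\rho$, the identity $\|\omega\|_\mathrm{F}\equiv 1$ gives $\langle\omega,\dot\omega\rangle_\mathrm{F}\equiv 0$, so
\begin{equation*}
\|\dot\eta\|_\mathrm{F}^2 = \dot\rho^2+\rho^2\|\dot\omega\|_\mathrm{F}^2 \quad\text{and}\quad \|\dot{\hat\eta}\|_\mathrm{F}^2 = r^2\|\dot\omega\|_\mathrm{F}^2.
\end{equation*}
Since $\rho\geq r$, this yields $\|\dot{\hat\eta}\|_\mathrm{F}\leq\|\dot\eta\|_\mathrm{F}$ pointwise, hence $l_\mathrm{E}(\hat\eta)\leq l_\mathrm{E}(\eta)=l_\mathrm{E}(\gamma)$.

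Finally, $\hat\eta$ is a curve on a Euclidean sphere of radius $r$ in $\mathbb{R}^{n\times p}$ connecting the antipodal points $R$ and $-R$, so its length is at least $\pi r=\tfrac{\pi}{2}\|\widetilde{U}-U\|_\mathrm{F}$ (by \cref{prop:vecdist}, applied after vectorization). Chaining this with the previous inequality gives the claim. The main technical ingredient is the radial-projection step, and it is made easy precisely by the inequality $\rho\geq r$ extracted from the hemisphere hypothesis; I do not foresee any serious obstacle.
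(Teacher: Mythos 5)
Your proposal is correct and follows the same geometric route as the paper: both reduce the hypothesis to the statement that $\gamma$ stays outside the open ball of radius $r$ about $C$, and then invoke the fact that a curve between two antipodal points of the bounding sphere that never enters the open ball has length at least $\pi r$. The paper asserts this last fact as geometrically evident, whereas you justify it rigorously with the radial-projection argument (the polar decomposition $\eta=\rho\omega$ and the pointwise bound $\|\dot{\hat\eta}\|_\mathrm{F}\leq\|\dot\eta\|_\mathrm{F}$ when $\rho\geq r$), which is a welcome tightening of the exposition but not a different proof strategy.
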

%\end{tcolorbox}
\begin{proof}
    We recall the notation $\m{C} := \frac{U+\widetilde{U}}{2}$ and $r := \frac{\|U-\widetilde{U}\|_\mathrm{F}}{2}=\sqrt{\langle R, R \rangle_\mathrm{F}}$. Having $\langle \gamma(t), C\rangle_\mathrm{F}\leq \langle U, C\rangle_\mathrm{F}$ is equivalent to $
        \langle \gamma(t)-\m{C},\gamma(t)-\m{C}\rangle_\mathrm{F}\geq r^2$. Indeed, \begin{align*}
        r^2&\leq\langle \gamma(t)-\m{C},\gamma(t)-\m{C}\rangle_\mathrm{F}\\
        \iff 0 &\geq 2 \langle \gamma(t), C\rangle_\mathrm{F}- \langle \gamma(t), \gamma(t)\rangle_\mathrm{F} - \langle C, C \rangle_\mathrm{F}+\langle R, R\rangle_\mathrm{F}\\
        \iff 0 &\geq \langle \gamma(t), C\rangle_\mathrm{F} - \langle U, C\rangle_\mathrm{F}.
        %&\hspace{3.5cm}-\left[\Tr(\m{C}^T\m{C})-\frac{1}{4}\Tr((U-\widetilde{U})^T(U-\widetilde{U}))\right]\\
        %\iff0&\geq\Tr\left(\gamma(t)^T(U+\widetilde{U})\right) - p -\Tr(U^T\widetilde{U})\\
        %\iff 0&\geq f_{U,\widetilde{U}}(\gamma(t)).
        %\iff0&\geq\Tr\left(\gamma(t)^T(U+\widetilde{U})\right) - \textcolor{blue}{p} -\textcolor{red}{\Tr(U^T\widetilde{U})}\\
        %\iff 0&\geq\Tr\left(\gamma(t)^T(U+\widetilde{U})\right) - \textcolor{blue}{\Tr(\gamma(t)^T\gamma(t))}\\
        %&\hspace{3.5cm}-\textcolor{red}{\left[\Tr(\m{C}^T\m{C})-\frac{1}{4}\Tr((U-\widetilde{U})^T(U-\widetilde{U}))\right]}\\
        %\iff r^2&\leq\Tr\left((\gamma(t)-\m{C})^T(\gamma(t)-\m{C})\right).
    \end{align*}
    Thus, $\gamma$ never enters the interior of the ball delimited by the sphere $\Sigma_{\widetilde{U},U}$ in $\mathbb{R}^{n\times p}$. 
Since $\gamma$ starts and ends on~$\Sigma_{\widetilde{U},U}$, it cannot be shorter than the shortest path on $\Sigma_{\widetilde{U},U}$. The latter has Euclidean length $\pi r = \frac{\pi}{2}\|\widetilde{U}-U\|_\mathrm{F} $. Therefore, we have $l_\mathrm{E}(\gamma)\geq \frac{\pi}{2}\|\widetilde{U}-U\|_\mathrm{F}$.
\end{proof}
The motivation behind~\cref{lem:outsideball} stems from our prior knowledge of  $d_\mathrm{E}(U,\widetilde{U})\leq \frac{\pi}{2}\|\widetilde{U}-U\|_\mathrm{F}$ (see~\cref{thm:linearbound}). Hence, if we find pairs of points $U, \widetilde{U}\in\mathrm{St}(n,p)$ such that the open spherical cap admits no point of $\mathrm{St}(n,p)$, namely \begin{equation}\label{eq:hole}
    \mathrm{St}(n,p)\cap\left\{\m{X}\in\mathbb{R}^{n\times p}\ | \ \langle X, C\rangle_\mathrm{F}> \langle U, C\rangle_\mathrm{F}\right\}  = \emptyset,
\end{equation}
we ensure~\cref{lem:outsideball} to hold for all curves $\gamma:[0,1]\rightarrow\mathrm{St}(n,p)$ going from $U$ to $\widetilde{U}$. In consequence, we will have $d_\mathrm{E}(U,\widetilde{U})= \frac{\pi}{2}\|\widetilde{U}-U\|_\mathrm{F}$ by leveraging~\cref{lem:outsideball} and~\cref{thm:linearbound}. Let us build such pairs. 
Take $k\in\{1,...,p\}$. For all $U\in \mathrm{St}(n,p)$, we define $h_k:\mathbb{R}^{n\times p}\rightarrow \mathbb{R}^{n\times p}$ by
\begin{equation}\label{eq:h_kdefinition}
    h_k(U) :=U \begin{bmatrix}
    -I_k&0\\
    0&I_{p-k}
    \end{bmatrix},
\end{equation} 
i.e., $h_k(U)$ ``flips'' the first $k$ columns of $U$. We show in~\cref{thm:reachlinearbound} that these pairs of matrices reach the upper bound.
%\begin{tcolorbox}
    \begin{thm}\label{thm:reachlinearbound}
    \it
        Let $n\geq2p$. For all $U\in \mathrm{St}(n,p)$ and $k\in\{1,...,p\}$, we have $d_\mathrm{E}(U,h_k(U)) = \frac{\pi}{2}\|h_k(U)-U\|_\mathrm{F}=\pi\sqrt{k}$ and $\widehat{M}_\mathrm{E}(\delta) = M_\mathrm{E}(\delta)$ at $\delta=2\sqrt{k}$.
    \end{thm}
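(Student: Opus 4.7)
The plan is to combine the linear upper bound from Theorem~\ref{thm:linearbound} with a lower bound obtained by verifying the hypothesis of Lemma~\ref{lem:outsideball} for the pair $(U,h_k(U))$. First I would compute directly that $h_k(U)-U = U\begin{bmatrix}-2I_k & 0 \\ 0 & 0\end{bmatrix}$, so $\|h_k(U)-U\|_{\mathrm{F}}^2 = 4k$, i.e., $\|h_k(U)-U\|_{\mathrm{F}} = 2\sqrt{k}$. Theorem~\ref{thm:linearbound} (which requires $n\geq 2p$) then immediately yields $d_{\mathrm{E}}(U,h_k(U))\leq \frac{\pi}{2}\cdot 2\sqrt{k} = \pi\sqrt{k}$.

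The heart of the argument is the matching lower bound. The midpoint is $C = \frac{U+h_k(U)}{2} = U\begin{bmatrix}0 & 0 \\ 0 & I_{p-k}\end{bmatrix}$, so the first $k$ columns of $C$ vanish and the last $p-k$ columns coincide with those of $U$. Hence for any $X\in\mathrm{St}(n,p)$,
\begin{equation*}
\langle X,C\rangle_{\mathrm{F}} = \sum_{j=k+1}^{p}\langle X_{\cdot,j},U_{\cdot,j}\rangle \leq \sum_{j=k+1}^{p}\|X_{\cdot,j}\|_2\|U_{\cdot,j}\|_2 = p-k = \langle U,C\rangle_{\mathrm{F}},
\end{equation*}
by Cauchy--Schwarz and the fact that the columns of $X$ and $U$ are unit vectors. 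This shows that the open spherical cap $\{X : \langle X,C\rangle_{\mathrm{F}} > \langle U,C\rangle_{\mathrm{F}}\}$ contains no point of $\mathrm{St}(n,p)$, so every curve $\gamma:[0,1]\to\mathrm{St}(n,p)$ from $U$ to $h_k(U)$ satisfies the hypothesis of Lemma~\ref{lem:outsideball}. Consequently $l_{\mathrm{E}}(\gamma)\geq \frac{\pi}{2}\|h_k(U)-U\|_{\mathrm{F}} = \pi\sqrt{k}$, and taking the infimum over $\gamma$ gives $d_{\mathrm{E}}(U,h_k(U))\geq \pi\sqrt{k}$. Combined with the upper bound above, equality follows.

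Finally, to conclude that $\widehat{M}_{\mathrm{E}}(\delta) = M_{\mathrm{E}}(\delta)$ at $\delta = 2\sqrt{k}$, I would check both pieces of the piecewise definition~\eqref{eq:M_hat_E}. For $k\geq 2$ we have $2\sqrt{k}>2$, so $\widehat{M}_{\mathrm{E}}(2\sqrt{k}) = \tfrac{\pi}{2}\cdot 2\sqrt{k} = \pi\sqrt{k}$; for $k=1$ we have $\widehat{M}_{\mathrm{E}}(2) = 2\arcsin(1) = \pi = \pi\sqrt{1}$. In every case the value is realised by the pair $(U,h_k(U))$, which certifies $M_{\mathrm{E}}(\delta)\geq \pi\sqrt{k}$; together with $M_{\mathrm{E}}(\delta)\leq \widehat{M}_{\mathrm{E}}(\delta)$, equality holds. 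The only delicate point in the whole argument is the columnwise Cauchy--Schwarz step identifying $\langle X,C\rangle_{\mathrm{F}}$ as a sum of inner products of unit vectors; everything else is bookkeeping.
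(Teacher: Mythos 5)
Your proposal is correct and follows essentially the same route as the paper: compute $C$, verify columnwise that $\langle X,C\rangle_{\mathrm{F}}\leq \langle U,C\rangle_{\mathrm{F}}$ for all $X\in\mathrm{St}(n,p)$, and combine \cref{lem:outsideball} with \cref{thm:linearbound}; you spell out the Cauchy--Schwarz step that the paper leaves implicit. One small point: \cref{lem:outsideball} as stated requires $\widetilde{U}\neq -U$, so the case $k=p$ (where $h_p(U)=-U$) is not literally covered by it; the paper sidesteps this by invoking \cref{thm:Euclideandiameter} for $k=p$ and assuming $k\leq p-1$ thereafter, and you should do the same.
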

%\end{tcolorbox}
\begin{proof}
%The case $k=p$ was treated in Theorem~\cref{thm:Euclideandiameter}, hence we consider $k\leq p-1$. 
\cref{thm:reachlinearbound} is already proven for $k=p$ in~\cref{thm:Euclideandiameter}. Hence, we assume $k\leq p-1$ so that $h_k(U)\neq -U$. %By definition of $h_k(U)$, we have $\Tr(U^T h_k(U)) = p - 2k$. 
By definition of $h_k(U)$, we have
\begin{equation*}
    C:=\frac{1}{2}\left(U + h_k(U)\right) = U\begin{bmatrix}
    0_{k}&0\\
    0&I_{p-k}
    \end{bmatrix}.
\end{equation*} 
Hence, for all $\m{X}\in \mathrm{St}(n,p)$, we have $\langle X, C\rangle_\mathrm{F}\leq p-k$. %It follows that for all $\m{X}\in \mathrm{St}(n,p)$, we have
Moreover, $\langle U, C \rangle_\mathrm{F} = p-k$ by definition. This yields $\langle X, C\rangle_\mathrm{F}\leq \langle U, C \rangle_\mathrm{F}$ for all $X\in \mathrm{St}(n,p)$.
\begin{comment}
\begin{align*}
    f_{U,h_k(U)}(\m{X})  &= \Tr\left(\m{X}^T(U+h_k(U))\right) - p -\Tr(U^Th_k(U))\\
    &\leq 2(p-k) - p -(p-2k)\\
    &=0.
\end{align*}
\end{comment}
Therefore, we obtain $d_\mathrm{E}(U,h_k(U)) = \frac{\pi}{2}\|h_k(U)-U\|_\mathrm{F}=\pi\sqrt{k}$ by leveraging~\cref{lem:outsideball} and~\cref{thm:linearbound}. 
\end{proof}
\begin{rem}
    In~\eqref{eq:h_kdefinition}, $h_k$ is arbitrarily defined as a function flipping the $k$ first columns of $U$. It is clear however that we can flip any subset of $k$ columns among $p$ and~\cref{thm:reachlinearbound} still holds. For each value of $k$, this makes $\left(\begin{smallmatrix}p\\ k\end{smallmatrix}\right):=\frac{p!}{k!(p-k)!}$ possible choices. Hence, for a given $U$, we highlighted a total of $2^p = \sum_{k=0}^p\left(\begin{smallmatrix}p\\ k\end{smallmatrix}\right)$ matrices reaching the bound.
\end{rem}

By~\cref{thm:reachlinearbound}, it is sufficient to find a geodesic of length $\pi\sqrt{k}$ between $U$ and $h_k(U)$ to ensure its minimality. To conclude this section, we build such geodesics. For all $U\in\mathrm{St}(n,p)$, all $\theta\in\mathbb{R}$, let $G_l(\theta)\in\mathrm{SO}(l)$ from \eqref{eq:G_theta} and $u_\perp\in\mathrm{St}(n,1)$ with $u_\perp^TU=0$, and let 
\begin{equation*}
        \breve{A} := \frac{\pi\sqrt{3}}{3}\begin{bmatrix}
            0&1&-1\\
            -1&0&1\\
            1&-1&0
        \end{bmatrix}\text{ and } \breve{B}:=\frac{\pi\sqrt{3}}{3}\begin{bmatrix}
            1&1&1
        \end{bmatrix}.
    \end{equation*} 
Finally, for $k=1,...,p$, we define the curves
\begin{align}\label{eq:Euclidean_to_hk}
\gamma_k:[0,1]&\rightarrow\mathrm{St}(n,p)\\
		\nonumber
        t&\mapsto\begin{cases}
        [U_{1:k}\m{G}_k(t\pi)\ \ U_{k+1:p}]\hspace{1.8cm}\text{ if $k$ is even,}\\
        [\cos(t\pi)U_{1}+\sin(t\pi)U_{\perp} \quad U_{2:p}]\quad \text{ if } k=1, \text{ otherwise,}\\
         [U_{1:k-3}\m{G}_{k-3}(t\pi)\ \ [
            U_{k-2:k}\ U_{\perp}
        ]\exp\left( t\left[\begin{smallmatrix}
            2\breve{A}&-\breve{B}^T\\
            \breve{B}&0
        \end{smallmatrix}\right]\right)\m{I}_{4\times 3}\exp(-\breve{A}t)\ \ U_{k+1:p}].
        \end{cases}
\end{align}
\begin{thm}\label{thm:Euclidean_minimality}
Let $n\geq 2p$. For all $U\in\mathrm{St}(n,p)$ and all $k\in\{1,...,p\}$, it holds that $\gamma_k$ from \eqref{eq:Euclidean_to_hk} is a minimal geodesic from $U$ to $h_k(U)$. 
\end{thm}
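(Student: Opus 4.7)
The plan is to show, in each of the three cases of~\eqref{eq:Euclidean_to_hk}, that $\gamma_k$ is a constant-speed $\mathrm{E}$-geodesic of length $\pi\sqrt{k}$ joining $U$ and $h_k(U)$; minimality then follows immediately from~\cref{thm:reachlinearbound}, which states $d_\mathrm{E}(U,h_k(U))=\pi\sqrt{k}$. In each case I will exhibit a tangent vector $\Delta\in T_U\mathrm{St}(n,p)$ with $\|\Delta\|_\mathrm{E}=\pi\sqrt{k}$ such that $\gamma_k(t)=\mathrm{Exp}_{\mathrm{E},U}(t\Delta)$, via the closed form of~\cref{thm:geodesics}.

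For $k$ even, take $A:=\mathrm{blkdiag}(A_k(\pi),\,0_{p-k})\in\mathrm{Skew}(p)$ and $\Delta:=UA$. The $B$-part vanishes, and~\eqref{eq:paramgeogeneral} with $\beta=1$ collapses to $\mathrm{Exp}_{\mathrm{E},U}(t\Delta)=U\exp(tA)=[U_{1:k}G_k(t\pi)\ U_{k+1:p}]=\gamma_k(t)$. Since $k$ is even, every $2\times 2$ block of $G_k(\pi)$ equals $-I_2$, hence $G_k(\pi)=-I_k$ and $\gamma_k(1)=h_k(U)$; moreover $\|\Delta\|_\mathrm{E}^2=\|A_k(\pi)\|_\mathrm{F}^2=k\pi^2$. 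For $k=1$, take $\Delta:=\pi\,u_\perp e_1^T$: then $A=U^T\Delta=0$ and $(I-UU^T)\Delta=u_\perp(\pi e_1^T)$, so the reduced form of~\cref{thm:geodesics} (using the single extra column $u_\perp$) collapses column by column to $\gamma_1(t)=[\cos(t\pi)U_1+\sin(t\pi)u_\perp\ U_{2:p}]$, giving $\gamma_1(1)=h_1(U)$ and $\|\Delta\|_\mathrm{E}=\pi$.

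For $k\geq 3$ odd, combine the two previous ideas by setting
\begin{equation*}
A:=\mathrm{blkdiag}\bigl(A_{k-3}(\pi),\,\breve{A},\,0_{p-k}\bigr)\in\mathrm{Skew}(p),\qquad B:=[0_{1\times(k-3)}\ \breve{B}\ 0_{1\times(p-k)}],
\end{equation*}
and $\Delta:=UA+u_\perp B\in T_U\mathrm{St}(n,p)$. I apply~\cref{thm:geodesics} in its reduced form with $Q=u_\perp$. After a permutation of the $p+1$ indices grouping the first $k-3$ indices, the three middle indices together with the $u_\perp$-slot, and the last $p-k$ indices, the matrix $\left[\begin{smallmatrix}2A & -B^T\\ B & 0\end{smallmatrix}\right]$ becomes block-diagonal with diagonal blocks $2A_{k-3}(\pi)$, $M_4:=\left[\begin{smallmatrix}2\breve{A} & -\breve{B}^T\\ \breve{B} & 0\end{smallmatrix}\right]$, and $0_{p-k}$; the factor $\exp(-tA)$ decomposes analogously. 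Distributing block by block and using $G_{k-3}(2t\pi)G_{k-3}(-t\pi)=G_{k-3}(t\pi)$ reproduces exactly the third branch of~\eqref{eq:Euclidean_to_hk}. The length is $\|\Delta\|_\mathrm{E}^2=(k-3)\pi^2+\|\breve{A}\|_\mathrm{F}^2+\|\breve{B}\|_\mathrm{F}^2=(k-3)\pi^2+2\pi^2+\pi^2=k\pi^2$.

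The main obstacle is the endpoint check in this last case, which reduces to the matrix identity $\exp(M_4)\,I_{4\times 3}\,\exp(-\breve{A})=-I_{4\times 3}$. I would handle it by spectral analysis. First, $\breve{A}=\pi[\hat n]_\times$ with $\hat n=-(1,1,1)^T/\sqrt{3}$, so Rodrigues' rotation formula gives $\exp(\breve{A})=-I_3+\tfrac{2}{3}\mathbf{1}\mathbf{1}^T$ with $\mathbf{1}=(1,1,1)^T$. Second, the scaling $\pi\sqrt{3}/3$ in $\breve{A},\breve{B}$ is precisely tuned so that $M_4$ has eigenvalues $\pm i\pi,\pm 2i\pi$; hence $\exp(M_4)$ has eigenvalues $1,1,-1,-1$. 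Computing the $\pm 1$-eigenspaces explicitly (the $+1$-eigenspace is $\{[w,0]^T:w\perp\mathbf{1}\}$ and the $-1$-eigenspace is $\mathrm{span}([\mathbf{1},0]^T,[0,1]^T)$) shows that the first three columns of $\exp(M_4)$ form $\left[\begin{smallmatrix}I_3-\tfrac{2}{3}\mathbf{1}\mathbf{1}^T\\ 0_{1\times 3}\end{smallmatrix}\right]=-I_{4\times 3}\exp(\breve{A})$, yielding the desired identity. Consequently $\gamma_k(1)=[-U_{1:k-3}\ -U_{k-2:k}\ U_{k+1:p}]=h_k(U)$ and the proof is complete.
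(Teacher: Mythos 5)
Your proof is correct and follows the same overall strategy as the paper: reduce to checking that $\gamma_k(0)=U$, $\gamma_k(1)=h_k(U)$, and $l_\mathrm{E}(\gamma_k)=\pi\sqrt{k}$, then invoke~\cref{thm:reachlinearbound}. The one place where you genuinely diverge is the odd $k\geq 3$ endpoint check, i.e., the identity $\exp(M_4)\,I_{4\times 3}\,\exp(-\breve{A})=-I_{4\times 3}$ which the paper relegates to~\cref{app:Euclideanexponential}. There, the paper lists the complex eigenvectors of $\breve{A}$ and $M_4$ explicitly and forms $V\exp(\Lambda)\overline{V}^T$ entry by entry. You instead get $\exp(\breve{A})=-I_3+\tfrac{2}{3}\mathbf{1}\mathbf{1}^T$ immediately from Rodrigues' formula (since $\breve{A}=\pi[\hat n]_\times$ with $\hat n$ a unit vector) and obtain $\exp(M_4)$ as $P_+-P_-$ from the real $\pm1$-eigenspace projectors, which is cleaner and makes the cancellation with $\exp(-\breve{A})$ transparent (since $\exp(\breve{A})$ is a symmetric involution, $\exp(-\breve{A})=\exp(\breve{A})$). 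Your claimed eigenspaces are indeed correct: $M_4[\mathbf{1},0]^T=\pi\sqrt{3}[0,1]^T$ and $M_4[0,1]^T=-\tfrac{\pi}{\sqrt{3}}[\mathbf{1},0]^T$ because $\breve A\mathbf{1}=0$ and $\breve B\mathbf{1}=\pi\sqrt{3}$, so that $2$-plane is invariant with $M_4^2=-\pi^2 I$ on it, and its orthogonal complement carries $\pm2\pi i$. You also spell out the initial velocities $\Delta$ and the permutation that block-diagonalizes $\left[\begin{smallmatrix}2A&-B^T\\B&0\end{smallmatrix}\right]$, which the paper leaves implicit; this adds nothing essential to the argument (length-minimality already forces $\gamma_k$ to be a geodesic) but is a useful sanity check. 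No gaps.
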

\begin{proof}
In view of~\cref{thm:reachlinearbound}, it is enough to show that $\gamma_k(0)=U$, $\gamma_k(1)=h_k(U)$ and $l_\mathrm{E}(\gamma_k)=\pi\sqrt{k}$.

If $k$ is even, let $A_k(\pi)$ be defined as in \eqref{eq:Atheta}. Then, $l_\mathrm{E}(\gamma_k) = \|A_k(\pi)\|_\mathrm{E} = \|A_k(\pi)\|_\mathrm{F} = \pi\sqrt{k}$. It is clear that  $\gamma_k(0)=U$ and $\gamma_k(1)=h_k(U)$. Hence, $\gamma_k$ is a minimal geodesic.

If $k=1$, it holds that $\gamma_1(0)=U$, $\gamma_1(1)=h_1(U)$. Moreover, $l_\mathrm{E}(\gamma_1) = \pi$.

If $k\geq 3$ and $k$ odd, again, $\gamma_k(0)=U$. It is less obvious that $\gamma_k(1)=h_k(U)$ and we prove it in~\cref{app:Euclideanexponential} since it follows exclusively from computations.  In addition, $l_\mathrm{E}(\gamma_k) = \sqrt{\|A_{k-3}(\pi)\|_\mathrm{F}^2+\|\breve{A}\|_\mathrm{F}^2+\|\breve{B}\|_\mathrm{F}^2} = \pi\sqrt{(k-3) + 2 + 1} = \pi\sqrt{k}$.
\end{proof}
\begin{thm}\label{thm:Euclidean_geodesics_dist}
    Let $n\geq2p$. For all $U\in\mathrm{St}(n,p)$, all $k\in\{1,...,p\}$, if $\gamma_k$ is the geodesic from \eqref{eq:Euclidean_to_hk}, then $d_\mathrm{E}(U,\gamma_k(t))= 2\sqrt{k}\arcsin\left(\frac{\|U-\gamma_k(t)\|_\mathrm{F}}{2\sqrt{k}}\right)$. 
\end{thm}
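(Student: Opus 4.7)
The plan is to split the identity into
\begin{equation*}
d_\mathrm{E}(U,\gamma_k(t)) = t\pi\sqrt{k} \quad \text{and} \quad \|U-\gamma_k(t)\|_\mathrm{F} = 2\sqrt{k}\sin(t\pi/2),
\end{equation*}
whose combination immediately yields the claim since $\arcsin(\sin(t\pi/2)) = t\pi/2$ for $t\in[0,1]$.

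The distance identity follows at once from~\cref{thm:Euclidean_minimality}: $\gamma_k$ is a minimal geodesic of length $\pi\sqrt{k}$ from $U$ to $h_k(U)$. Any subarc of a minimal geodesic is itself minimal, since otherwise concatenating a shorter subarc with the complementary arc would contradict the minimality of $\gamma_k$; combined with the constant-speed property of geodesics, this gives $d_\mathrm{E}(U,\gamma_k(t)) = l_\mathrm{E}(\gamma_k|_{[0,t]}) = t\pi\sqrt{k}$.

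For the Frobenius identity, I expand $\|U-\gamma_k(t)\|_\mathrm{F}^2 = 2p-2\Tr(U^T\gamma_k(t))$, reducing the claim to $\Tr(U^T\gamma_k(t)) = k\cos(t\pi) + (p-k)$. For even $k$ this is immediate from $\Tr(G_k(t\pi))=k\cos(t\pi)$ and the $p-k$ unchanged columns. For $k=1$ it is a direct expansion using $u_\perp^TU=0$. For odd $k\geq 3$, the first $k-3$ and the last $p-k$ columns respectively contribute $(k-3)\cos(t\pi)$ and $p-k$, reducing the task to
\begin{equation*}
\Tr(U_{k-2:k}^TP(t)) = \Tr\bigl(E(t)_{\mathrm{top}}\exp(-\breve{A}t)\bigr) = 3\cos(t\pi),
\end{equation*}
where $P(t)$ is the middle $3$-block of $\gamma_k(t)$ and $E(t)_\mathrm{top} := I_{3\times 4}\exp(tS)I_{4\times 3}$ with $S := \left[\begin{smallmatrix}2\breve{A} & -\breve{B}^T\\ \breve{B} & 0\end{smallmatrix}\right]$.

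The main obstacle is this last trace identity. My approach is to exploit a compatible spectral decomposition: $\breve{A}$ has eigenvalues $\{0,\pm i\pi\}$ with $0$-eigenvector $c_1 := (1,1,1)^T/\sqrt{3}$, while $S$ has eigenvalues $\{\pm i\pi,\pm 2i\pi\}$ with the $\pm i\pi$-eigenspace projecting onto the direction $c_1$ and the $\pm 2i\pi$-eigenspace projecting onto $c_1^{\perp}\subset\mathbb{R}^3$ (spanned by $c_3:=(1,-1,0)^T/\sqrt{2}$ and $c_4:=(1,1,-2)^T/\sqrt{6}$). Setting $X:=c_1 c_1^T$, $Y:=I_3-X$, and $Z:=c_3 c_4^T-c_4 c_3^T$, both factors split into the same building blocks:
\begin{equation*}
\exp(-\breve{A}t) = X + \cos(t\pi)\,Y - \sin(t\pi)\,Z, \qquad E(t)_\mathrm{top} = \cos(t\pi)\,X + \cos(2\pi t)\,Y + \sin(2\pi t)\,Z.
\end{equation*}
The algebra $X^2=X$, $Y^2=Y$, $Z^2=-Y$, $XY=XZ=0$, $YZ=ZY=Z$, combined with the angle-subtraction identities $\cos(2\pi t - t\pi)=\cos(t\pi)$ and $\sin(2\pi t - t\pi)=\sin(t\pi)$, collapses the product to $\cos(t\pi)\,I_3 + \sin(t\pi)\,Z$, whose trace is $3\cos(t\pi)$ since $Z$ is skew-symmetric. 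Thus $\|U-\gamma_k(t)\|_\mathrm{F}^2 = 4k\sin^2(t\pi/2)$, completing the proof.
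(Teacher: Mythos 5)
Your proof is correct and follows the same overall outline as the paper's: minimality of $\gamma_k$ from \cref{thm:Euclidean_minimality} gives $d_\mathrm{E}(U,\gamma_k(t))=t\pi\sqrt{k}$, and the Frobenius side reduces by cases to the trace identity $\Tr\bigl(I_{3\times 4}\exp(tM)I_{4\times 3}\exp(-\breve{A}t)\bigr)=3\cos(t\pi)$ for odd $k\geq 3$. The paper, however, simply asserts that this trace identity ``can be obtained analytically either by using the eigenvalue decomposition\ldots or by using the Laplace transform. In both cases, this leads to quite tedious but feasible computations,'' and does not carry it out. You have actually done the work: the observation that the $\pm i\pi$- and $\pm 2i\pi$-eigenspaces of $S$ project, in the top block, onto $\mathrm{span}\{c_1\}$ and $c_1^\perp$ respectively, yielding the decomposition of both $\exp(-\breve{A}t)$ and $E(t)_{\mathrm{top}}$ over the commuting idempotent/skew algebra generated by $X,Y,Z$, is a clean way to collapse the product via angle-subtraction formulas. (I verified that with $c_3=(1,-1,0)/\sqrt{2}$, $c_4=(1,1,-2)/\sqrt{6}$ one indeed has $\breve{A}=\pi Z$, and that the rotational component of $E(t)_{\mathrm{top}}$ carries the same $Z$ with the sign $+\sin(2\pi t)Z$, so the signs are consistent.) This fills a genuine gap in the paper's exposition; if you wanted to tighten it further you could note that the algebra relations $X^2=X$, $Y^2=Y$, $Z^2=-Y$, $XY=YX=XZ=ZX=0$, $YZ=ZY=Z$ mean $X$ and $(Y,Z)$ generate commuting copies of $\mathbb{R}$ and $\mathbb{C}$ acting on orthogonal subspaces, which is exactly why the cosines and sines combine by the subtraction formula.
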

\begin{proof}
We know from~\cref{thm:Euclidean_minimality} that $\gamma_k$ is minimal. Hence, $d_\mathrm{E}(U,\gamma_k(t)) =t\|\dot{\gamma}_k(0)\|_{\mathrm{E}}= t \pi	\sqrt{k}$.
We divide the problem in subcases as previously.

If $k$ is even, replacing $p$ by $k$ in~\cref{thm:lowerboundreached} and its proof is enough to conclude.% to  Theorem~\cref{thm:Euclidean_geodesics_dist}.

If $k=1$, we have 
$$
\|U-\gamma_1(t)\|_\mathrm{F}^2=\|(1-\cos(t\pi)U_1-\sin(t\pi)u_\perp\|^2_\mathrm{F}=4\sin^2\left(\frac{t\pi}{2}\right).$$
This leads to $t\pi=d_\mathrm{E}(U,\gamma_1(t))= 2\arcsin\left(\frac{\|U-\gamma_1(t)\|_\mathrm{F}}{2}\right)$.

If  $k\geq 3$ and odd, let $M:=\left[\begin{smallmatrix}
            2\breve{A}&-\breve{B}^T\\
            \breve{B}&0
        \end{smallmatrix}\right]$. We have
\begin{align*}\|U-\gamma_k(t)\|_\mathrm{F}^2 &= \|I_{k-3}-G(t\pi)\|^2_\mathrm{F}+\|I_{4\times 3}-\exp\left( tM\right)\m{I}_{4\times 3}\exp(-\breve{A}t)\|^2_\mathrm{F}\\
        &=2(k-3)(1-\cos(t\pi))+2\left[3-\Tr\left(I_{3\times 4}\exp\left( tM\right)\m{I}_{4\times 3}\exp(-\breve{A}t)\right)\right]\\
        &=2k(1-\cos(t\pi))=4k\sin^2\left(\frac{d_\mathrm{E}(U,\gamma_k(t))}{2\sqrt{k}}\right).
\end{align*}
The fact that $\Tr\left(I_{3\times 4}\exp\left( tM\right)\m{I}_{4\times 3}\exp(-\breve{A}t)\right)=3\cos(t\pi)$ can be obtained analytically either by using the eigenvalue decomposition of $tM$ and $t\breve{A}$ from~\cref{app:Euclideanexponential}, or by using the Laplace transform. In both cases, this leads to quite tedious but feasible computations. Hence, $d_\mathrm{E}(U,\gamma_k(t))= 2\sqrt{k}\arcsin\left(\frac{\|U-\gamma_k(t)\|_\mathrm{F}}{2\sqrt{k}}\right)$. This concludes the proof.
\end{proof}

The evolution of the geodesic distance in terms of the Frobenius distance for the minimal geodesics $\gamma_k$ is illustrated by solid black lines in~\cref{fig:tominusU} for $k=1, 2, 3, 4$ in $\mathrm{St}(5,4)$. We transgress willingly the condition $n\geq2p$ to observe numerically that the upper bound is satisfied nonetheless. %Since these geodesics reach the upper bound on the geodesic distance, these curves are the ones that ``deviate'' the most from the geodesics on the hypersphere $\mathbb{S}^{n\times p}_{\sqrt{p}}$. 
%The geometric intuition from Theorem~\cref{thm:reachlinearbound} is that there is a hole in $\mathbb{S}^{n\times p}_{\sqrt{p}}$ between $U$ and $h_k(U)$ where no points of $\mathrm{St}(n,p)$ can exist.
\begin{figure}
    \centering
    \includegraphics[width = 8cm]{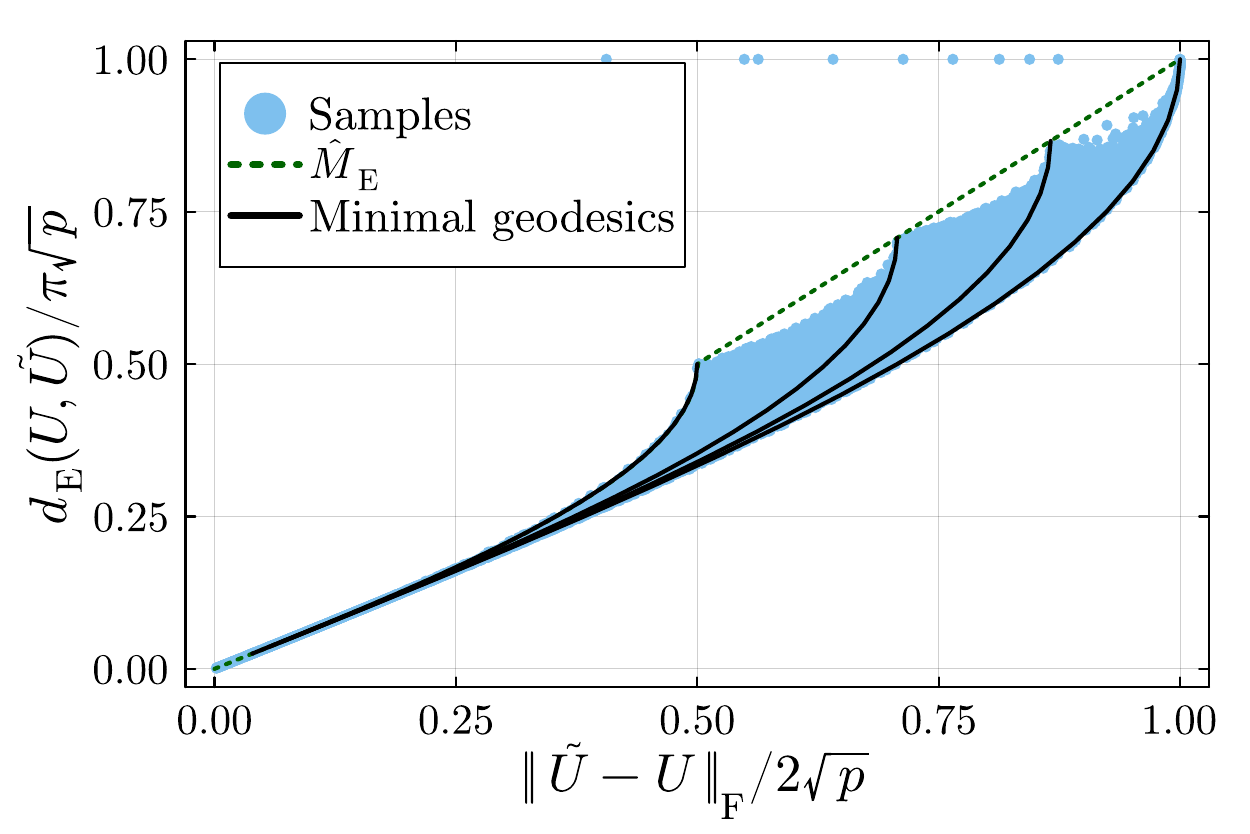}
    \caption{Evolution of the Euclidean geodesic distance ($\beta=1$) in terms of the Frobenius distance for  minimal geodesics going  from $U$ to $h_k(U)$ for $k=1, 2, 3, 4$ in $\mathrm{St}(5,4)$ (solid black lines). The upper bound $\widehat{M}_\mathrm{E}$ is represented by the green dashed line. Notice that the condition $n\geq 2p$ is not satisfied but~\cref{thm:linearbound} and~\cref{thm:first_upper_bound} are verified nonetheless, except for failures of the method. If $\delta:=\|U-\widetilde{U}\|_\mathrm{F}$,~\cref{thm:Euclidean_geodesics_dist} shows that the solid black curves have the shape $[0,2\sqrt{k}]\ni\delta\mapsto2\sqrt{k}\arcsin\left(\frac{\delta}{2\sqrt{k}}\right)$ in the $(\delta,d_\mathrm{E}(U,\widetilde{U}))$-plane.% In the normalized plane, we have the curves  $[0,\sqrt{\frac{k}{p}}]\ni\delta\mapsto\frac{2}{\pi}\sqrt{\frac{k}{p}}\arcsin\left(\delta\sqrt{\frac{p}{k}}\right)$.
    }
    \label{fig:tominusU}
\end{figure}

Previously,~\cref{thm:lowerboundreached} and \cref{thm:lowerboundreachedEuclideanetal} constrained either $p$ to be even or $n\geq2p$ to have $m_\beta=\widehat{m}_\beta$. In the particular case of the Euclidean metric ($\beta=1$),~\cref{thm:Euclidean_geodesics_dist} for $k=p$ shows that $m_\mathrm{E} = \widehat{m}_\mathrm{E}$ for all pairs $(n,p)$. This is explained in~\cref{cor:reachedforallp}.
\begin{cor}\label{cor:reachedforallp}
For all $U\in\mathrm{St}(n,p)$, all $\delta\in[0,2\sqrt{p}]$, there is $\widetilde{U}\in\mathrm{St}(n,p)$ with $\|U-\widetilde{U}\|_\mathrm{F}=\delta$ such that the lower bound from~\cref{thm:Euclideanlowerbound} is attained, i.e.,  $d_\mathrm{E}(U,\widetilde{U})= 2\sqrt{p}\arcsin\left(\frac{\|U-\widetilde{U}\|_\mathrm{F}}{2\sqrt{p}}\right)$. Hence, $\widehat{m}_\mathrm{E} = m_\mathrm{E}$.
\end{cor}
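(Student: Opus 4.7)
The plan is to apply Theorem \ref{thm:Euclidean_geodesics_dist} with $k=p$, letting the minimal geodesic $\gamma_p$ from \eqref{eq:Euclidean_to_hk} sweep out a one-parameter family of witness points. That theorem directly asserts, for every $t\in[0,1]$,
\begin{equation*}
    d_\mathrm{E}(U,\gamma_p(t)) \;=\; 2\sqrt{p}\,\arcsin\!\left(\frac{\|U-\gamma_p(t)\|_\mathrm{F}}{2\sqrt{p}}\right),
\end{equation*}
which is exactly the right-hand side of the bound in Theorem \ref{thm:Euclideanlowerbound}, i.e.\ $\widehat{m}_\mathrm{E}(\|U-\gamma_p(t)\|_\mathrm{F})$ with $\beta=1$ as defined in \eqref{eq:m_hat_beta}.

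The second step is to verify that as $t$ runs over $[0,1]$, the Frobenius distance $\|U-\gamma_p(t)\|_\mathrm{F}$ covers the whole interval $[0,2\sqrt{p}]$. At the endpoints, $\gamma_p(0)=U$ yields $0$ and $\gamma_p(1)=h_p(U)=-U$ yields $\|2U\|_\mathrm{F}=2\sqrt{p}$. Since $\gamma_p$ is continuous (in fact smooth, by its explicit formula), the intermediate value theorem applied to $t\mapsto\|U-\gamma_p(t)\|_\mathrm{F}$ provides, for every prescribed $\delta\in[0,2\sqrt{p}]$, a parameter $t_\delta\in[0,1]$ with $\|U-\gamma_p(t_\delta)\|_\mathrm{F}=\delta$. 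Setting $\widetilde{U}:=\gamma_p(t_\delta)$ then exhibits a pair realizing $d_\mathrm{E}(U,\widetilde{U})=\widehat{m}_\mathrm{E}(\delta)$.

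The conclusion $\widehat{m}_\mathrm{E}=m_\mathrm{E}$ then follows by combining both inequalities: the witness gives $m_\mathrm{E}(\delta)\leq d_\mathrm{E}(U,\widetilde{U})=\widehat{m}_\mathrm{E}(\delta)$, while the reverse inequality $m_\mathrm{E}(\delta)\geq\widehat{m}_\mathrm{E}(\delta)$ is immediate from Theorem \ref{thm:Euclideanlowerbound} applied to every pair with Frobenius distance $\delta$. There is no real obstacle here: essentially all of the geometric content has already been packaged into the minimality and distance formulas of Theorems \ref{thm:Euclidean_minimality} and \ref{thm:Euclidean_geodesics_dist}, and the corollary amounts to observing that the single family $\{\gamma_p(t)\}_{t\in[0,1]}$ already suffices to attain the lower bound at every admissible Frobenius distance.
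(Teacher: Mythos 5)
Your approach is essentially the same as the paper's: take $\widetilde{U}=\gamma_p(t^*)$, use the intermediate value theorem to cover all $\delta\in[0,2\sqrt{p}]$, and invoke \cref{thm:Euclidean_geodesics_dist} with $k=p$. However, there is a genuine gap that you do not address: \cref{thm:Euclidean_geodesics_dist} (and \cref{thm:Euclidean_minimality}, on which it relies) carries the hypothesis $n\geq 2p$, whereas \cref{cor:reachedforallp} is stated — and must hold, as the surrounding discussion emphasizes — for \emph{all} pairs $(n,p)$ with $n>p$. A direct citation of \cref{thm:Euclidean_geodesics_dist} therefore proves the corollary only in the regime $n\geq 2p$.

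The paper closes this gap by observing that, for the specific case $k=p$, the $n\geq 2p$ hypothesis is a ``parasite'' inherited from \cref{thm:Euclidean_minimality}, and the minimality of $\gamma_p$ can be re-established without it. Indeed, one computes directly that $l_\mathrm{E}(\gamma_p)=\pi\sqrt{p}$, while the lower bound of \cref{thm:Euclideanlowerbound} (which has no dimensional restriction) gives $d_\mathrm{E}(U,-U)\geq 2\sqrt{p}\arcsin(1)=\pi\sqrt{p}$; equality forces $\gamma_p$ to be a minimal geodesic. The remaining arithmetic of \cref{thm:Euclidean_geodesics_dist} — relating $\|U-\gamma_p(t)\|_\mathrm{F}$ to $t$ — does not itself use $n\geq 2p$ (only well-definedness of $\gamma_p$, which holds whenever $n>p$). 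Your proof should include this step; without it, the claim ``for all $(n,p)$'' in \cref{cor:reachedforallp} is unsupported.
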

\begin{proof}
Take $\widetilde{U}=\gamma_p(t^*)$ where $t^*\in[0,1]$ is such that $\|U-\widetilde{U}\|_\mathrm{F}=\delta$. Since $\| U-\gamma_p(0)\|_\mathrm{F}=0$ and $\| U-\gamma_p(1)\|_\mathrm{F}=2\sqrt{p}$, the intermediate value theorem ensures the existence of $t^*\in[0,1]$.~\cref{cor:reachedforallp} holds by~\cref{thm:Euclidean_geodesics_dist}. The condition $n\geq2p$ from~\cref{thm:Euclidean_geodesics_dist} can be dropped because~\cref{thm:Euclidean_minimality} is not needed to ensure the minimality of $\gamma_p$. Indeed, the minimality of $\gamma_p$ holds because $l(\gamma_p) = \pi\sqrt{p}=d_\mathrm{E}(U,-U)$ by \cref{thm:Euclideanlowerbound}.
\end{proof}

Moreover \cref{thm:Euclidean_geodesics_dist} for $k=1$ shows that  $\widehat{M}_\mathrm{E}(\delta) = M_\mathrm{E}(\delta)$ for all $\delta\in[0,2]$.
\begin{cor}\label{cor:reach_first_upper_bound}
Let $n\geq 2p$. For all $U\in\mathrm{St}(n,p)$, all $\delta\in[0,2]$, there is $\widetilde{U}\in\mathrm{St}(n,p)$ with $\|U-\widetilde{U}\|_\mathrm{F}=\delta$ such that the upper bound from~\cref{thm:first_upper_bound} is attained, i.e.,  $d_\mathrm{E}(U,\widetilde{U})= 2\arcsin\left(\frac{\|U-\widetilde{U}\|_\mathrm{F}}{2}\right)$. Said otherwise, $\widehat{M}_\mathrm{E}(\delta) = M_\mathrm{E}(\delta)$ for all $\delta\in[0,2]$.
\end{cor}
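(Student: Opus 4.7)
The plan is to derive this corollary directly from \cref{thm:Euclidean_geodesics_dist} applied with $k=1$, in close analogy with the proof of \cref{cor:reachedforallp}. Specifically, I would fix $U\in\mathrm{St}(n,p)$ and $u_\perp\in\mathrm{St}(n,1)$ with $u_\perp^T U = 0$, and look at the one-parameter family of endpoints
\begin{equation*}
\widetilde{U}(t) := \gamma_1(t) = [\cos(t\pi)U_1+\sin(t\pi)u_\perp \ \ U_{2:p}], \quad t\in[0,1],
\end{equation*}
which is precisely the curve used in \eqref{eq:Euclidean_to_hk} for $k=1$.

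First I would compute the Frobenius distance along this curve, using the orthogonality $u_\perp^T U_1 = 0$ to get $\|U - \widetilde{U}(t)\|_\mathrm{F}^2 = \|(1-\cos(t\pi))U_1 - \sin(t\pi)u_\perp\|_\mathrm{F}^2 = 2(1-\cos(t\pi)) = 4\sin^2(t\pi/2)$, hence $\|U - \widetilde{U}(t)\|_\mathrm{F} = 2\sin(t\pi/2)$. As $t$ ranges over $[0,1]$, the map $t\mapsto 2\sin(t\pi/2)$ is continuous and goes monotonically from $0$ to $2$. So by the intermediate value theorem, for any prescribed $\delta\in[0,2]$ there exists $t^*\in[0,1]$ with $\|U-\widetilde{U}(t^*)\|_\mathrm{F}=\delta$.

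Next I would invoke \cref{thm:Euclidean_geodesics_dist} with $k=1$, which gives
\begin{equation*}
d_\mathrm{E}(U,\widetilde{U}(t^*)) = 2\arcsin\!\left(\tfrac{\|U-\widetilde{U}(t^*)\|_\mathrm{F}}{2}\right) = 2\arcsin\!\left(\tfrac{\delta}{2}\right),
\end{equation*}
which is exactly the value of $\widehat{M}_\mathrm{E}(\delta)$ from \eqref{eq:M_hat_E} on the interval $[0,2]$. Combined with \cref{thm:first_upper_bound}, which ensures $d_\mathrm{E}(U,\widetilde{U})\leq \widehat{M}_\mathrm{E}(\delta)$ for every admissible pair, this shows that $M_\mathrm{E}(\delta) = \widehat{M}_\mathrm{E}(\delta)$ for all $\delta\in[0,2]$.

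There is no real obstacle here since all the heavy lifting is already in \cref{thm:Euclidean_geodesics_dist} and \cref{thm:first_upper_bound}; the only things I must be careful about are (i) to note that the endpoint $\gamma_1(t^*)$ lies in $\mathrm{St}(n,p)$ (which holds by construction of $\gamma_1$, since $U_1$ and $u_\perp$ are orthonormal and decouple from the untouched block $U_{2:p}$), and (ii) to rely on the hypothesis $n\geq 2p$ so that $u_\perp$ can indeed be chosen orthogonal to $U$; note also that \cref{thm:Euclidean_geodesics_dist} has been proven under precisely that assumption.
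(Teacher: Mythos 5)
Your proof is correct and follows essentially the same route as the paper: both take $\widetilde{U}=\gamma_1(t^*)$ with $t^*$ supplied by the intermediate value theorem (your explicit computation $\|U-\gamma_1(t)\|_\mathrm{F}=2\sin(t\pi/2)$ just makes the monotonicity concrete) and conclude by \cref{thm:Euclidean_geodesics_dist} with $k=1$ together with \cref{thm:first_upper_bound}.
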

\begin{proof}
Take $\widetilde{U}=\gamma_1(t^*)$ where $t^*\in[0,1]$ is such that $\|U-\widetilde{U}\|_\mathrm{F}=\delta$. Since $\| U-\gamma_1(0)\|_\mathrm{F}=0$ and $\| U-\gamma_1(1)\|_\mathrm{F}=2$, the intermediate value theorem ensures the existence of $t^*\in[0,1]$.~\cref{cor:reach_first_upper_bound} holds by~\cref{thm:Euclidean_geodesics_dist}. 
\end{proof}
%In view of Theorem~\cref{thm:Euclidean_geodesics_dist}, the upper bound from Theorem~\cref{thm:first_upper_bound} is attained.
%\section{Equivalence of the $\beta$-distance and the Frobenius distance}\label{sec:frobeniusgeodesicequivalence}
\section{Bounds on the $\beta$-distance by the Frobenius distance: summary and small distances}\label{sec:generalization_to_beta}
%Theorems~\cref{thm:Euclideanlowerbound} and~\cref{thm:linearbound} and Corollary~\cref{cor:allbounds} allow us to conclude to the bilipschitz equivalence of the Euclidean geodesic distance with the Frobenius distance.
%In this section, we broaden our scope to the complete family of $\beta$-metrics. 
In view of the bilipschitz equivalence of the $\beta$-distances (\cref{cor:allbounds}), we define 
\begin{equation}\label{eq:M_hat_beta}
     \widehat{M}_\beta(\delta):=\max\{1,\beta\}\widehat{M}_\mathrm{E}(\delta)=\max\{1,\beta\}\begin{cases}
2\arcsin\left(\frac{\delta}{2}\right) \text{ if } \delta \leq 2,\\
\frac{\pi}{2}\delta\quad \quad\quad \quad\text{ otherwise}.
\end{cases}
\end{equation} 
$\widehat{M}_\mathrm{E}$ was defined in~\eqref{eq:M_hat_E}.~\cref{thm:generalizedbounds} generalizes the bounds from~\cref{thm:Euclideanlowerbound},~\cref{thm:linearbound} and~\cref{thm:first_upper_bound} to all $\beta$-distances by leveraging the bilipschitz equivalence from~\cref{cor:allbounds}. The bounds are illustrated in~\cref{fig:shootingfig}.
\begin{thm}\label{thm:generalizedbounds}
	\it
	 Let $n\geq 2p$. Then for all $\beta>0$, all $U,\widetilde{U}\in \mathrm{St}(n,p)$, we have $$\widehat{m}_\beta(\|\widetilde{U}-U\|_{\mathrm{F}})\leq d_{\beta}(\widetilde{U},U)\leq \widehat{M}_\beta(\|\widetilde{U}-U\|_{\mathrm{F}}),$$
	 where $\widehat{m}_\beta$ and $\widehat{M}_\beta$ are defined in~\eqref{eq:m_hat_beta} and~\eqref{eq:M_hat_beta} respectively.
	 %$\min\{1,\sqrt{\beta}\}\sqrt{p}\arcsin\left(\frac{\|\widetilde{U}-U\|_{\mathrm{F}}}{2\sqrt{p}}\right)\leq d_{\beta}(\widetilde{U},U) \leq \max\{1,\sqrt{\beta}\}\frac{\pi}{2}\|\widetilde{U}-U\|_{\mathrm{F}}$ .
\end{thm}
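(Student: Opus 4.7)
The plan is to assemble \cref{thm:generalizedbounds} as a direct chaining of three earlier results: \cref{cor:allbounds} to reduce the $\beta$-distance to the Euclidean distance, \cref{thm:Euclideanlowerbound} for the lower bound side, and the pair \cref{thm:linearbound}--\cref{thm:first_upper_bound} for the upper bound side. Since these earlier results have already done all of the geometric work (the hypersphere argument, the planar half-great-circle curve $\gamma_S$, and the normal-form isometric embedding), what remains is essentially a bookkeeping exercise in dispatching on the sign of $\beta-1$.

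For the lower bound, I would specialize \cref{cor:allbounds} with $\beta_1=\beta$ and $\beta_2=1$, which yields
\begin{equation*}
    d_\beta(U,\widetilde{U}) \;\geq\; \min\{1,\sqrt{\beta}\}\, d_\mathrm{E}(U,\widetilde{U}).
\end{equation*}
Then I invoke \cref{thm:Euclideanlowerbound} to replace $d_\mathrm{E}(U,\widetilde{U})$ by $2\sqrt{p}\arcsin\!\left(\tfrac{\|U-\widetilde{U}\|_\mathrm{F}}{2\sqrt{p}}\right)$. The product is exactly $\widehat{m}_\beta(\|U-\widetilde{U}\|_\mathrm{F})$ as defined in \eqref{eq:m_hat_beta}. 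Note that neither \cref{cor:allbounds} nor \cref{thm:Euclideanlowerbound} requires the assumption $n\geq 2p$, so the lower half holds unconditionally.

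For the upper bound, I apply the other side of \cref{cor:allbounds} (again with $\beta_1=\beta$, $\beta_2=1$) to obtain
\begin{equation*}
    d_\beta(U,\widetilde{U}) \;\leq\; \max\{1,\sqrt{\beta}\}\, d_\mathrm{E}(U,\widetilde{U}).
\end{equation*}
The hypothesis $n\geq 2p$ now enters, because it is required by both \cref{thm:linearbound} and \cref{thm:first_upper_bound}. I split on $\|U-\widetilde{U}\|_\mathrm{F}$: when $\|U-\widetilde{U}\|_\mathrm{F}\leq 2$, \cref{thm:first_upper_bound} gives $d_\mathrm{E}(U,\widetilde{U})\leq 2\arcsin(\|U-\widetilde{U}\|_\mathrm{F}/2)$; otherwise \cref{thm:linearbound} gives $d_\mathrm{E}(U,\widetilde{U})\leq \tfrac{\pi}{2}\|U-\widetilde{U}\|_\mathrm{F}$. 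Together these are precisely $\widehat{M}_\mathrm{E}(\|U-\widetilde{U}\|_\mathrm{F})$ from \eqref{eq:M_hat_E}. Multiplying by $\max\{1,\sqrt{\beta}\}$ and using the elementary inequality $\max\{1,\sqrt{\beta}\}\leq\max\{1,\beta\}$ (which is tight for $\beta\leq 1$) produces the upper bound $\widehat{M}_\beta(\|U-\widetilde{U}\|_\mathrm{F})$ as defined in \eqref{eq:M_hat_beta}.

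There is no real obstacle; the only points to be careful about are (i) checking which earlier hypotheses are needed (the lower bound is unconditional, the upper bound relies on $n\geq 2p$ through \cref{lem:augmentedrows} in the proofs of \cref{thm:linearbound} and \cref{thm:first_upper_bound}) and (ii) correctly combining $\max\{1,\sqrt{\beta}\}$ from the bilipschitz factor with $\max\{1,\beta\}$ appearing in the definition \eqref{eq:M_hat_beta}. The latter monotonicity ensures we never lose validity when passing from the sharper factor $\sqrt{\beta}$ to the factor $\beta$ used in the statement.
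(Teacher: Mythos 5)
Your proof is correct and follows essentially the same route as the paper's (terse) argument: the lower bound is \cref{cor:betalowerbound} verbatim, and the upper bound is the chaining of \cref{cor:allbounds} with \cref{thm:linearbound} and \cref{thm:first_upper_bound}. You also correctly note that the lower half needs no hypothesis on $(n,p)$, while the upper half inherits $n\geq 2p$ from \cref{lem:augmentedrows} via \cref{thm:linearbound,thm:first_upper_bound}.

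One remark worth keeping: your observation that \cref{cor:allbounds} actually delivers the sharper factor $\max\{1,\sqrt{\beta}\}$, and that you then must weaken it to $\max\{1,\beta\}$ only to match the definition in \eqref{eq:M_hat_beta}, strongly suggests that \eqref{eq:M_hat_beta} contains a typo and should read $\max\{1,\sqrt{\beta}\}\widehat{M}_\mathrm{E}(\delta)$ (mirroring the $\min\{1,\sqrt{\beta}\}$ in \eqref{eq:m_hat_beta}). As written the theorem is still true, just not as tight as the proof supports; if the definition is corrected, your argument already yields the stronger statement with no modification other than dropping the final weakening step.
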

\begin{proof}
	The lower bound is a reminder of \cref{cor:betalowerbound}. The upper bound is obtained by combining~\cref{thm:linearbound} and~\cref{thm:first_upper_bound} and~\cref{cor:allbounds}.
\end{proof}
%We have obtained a lower and the upper bound, respectively $\widehat{m}_\beta$ and $\widehat{M}_\beta$, on $d_\beta$ conjectured on Figure~\cref{fig:shootingfig}. 
%We recall that the bounds from~Theorem~\cref{thm:generalizedbounds} are conditions that have to be met by any candidate for the Riemannian logarithm.
By relaxing~\cref{thm:generalizedbounds}, we can highlight that all the $\beta$-distances are bilipschitz equivalent to the Frobenius distance. This result is presented for the sake of completeness although there is no reason to prefer~\cref{cor:Frob_equi_geo} to~\cref{thm:generalizedbounds} in practice.
\begin{cor}\label{cor:Frob_equi_geo}
        \it
         Let $n\geq 2p$. Then for all $\beta>0$, all $U,\widetilde{U}\in \mathrm{St}(n,p)$, we have
         $$\min\{1,\sqrt{\beta}\}\|\widetilde{U}-U\|_{\mathrm{F}}\leq d_\beta(\widetilde{U},U)\leq \max\{1,\sqrt{\beta}\}\frac{\pi}{2}\|\widetilde{U}-U\|_{\mathrm{F}}.$$
\end{cor}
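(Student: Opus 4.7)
The strategy is to sandwich $d_\beta$ between scaled copies of the Euclidean distance $d_\mathrm{E}$ using the bilipschitz equivalence already in hand, and then replace $d_\mathrm{E}$ by linear functions of the Frobenius distance using bounds we have proven. No new geometry is needed; this is essentially repackaging.

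First, I would specialize \cref{cor:allbounds} to $\beta_1=\beta$, $\beta_2=1$ to obtain
\begin{equation*}
\min\{1,\sqrt{\beta}\}\,d_\mathrm{E}(U,\widetilde{U})\;\leq\;d_\beta(U,\widetilde{U})\;\leq\;\max\{1,\sqrt{\beta}\}\,d_\mathrm{E}(U,\widetilde{U}).
\end{equation*}
Thus the task reduces to sandwiching $d_\mathrm{E}(U,\widetilde{U})$ between $\|U-\widetilde{U}\|_\mathrm{F}$ and $\tfrac{\pi}{2}\|U-\widetilde{U}\|_\mathrm{F}$.

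For the upper half, \cref{thm:linearbound} (which requires exactly the hypothesis $n\geq 2p$ carried by the corollary) gives $d_\mathrm{E}(U,\widetilde{U})\leq \tfrac{\pi}{2}\|U-\widetilde{U}\|_\mathrm{F}$, so multiplying by $\max\{1,\sqrt{\beta}\}$ yields the right-hand inequality. For the lower half, \cref{thm:Euclideanlowerbound} gives
\begin{equation*}
d_\mathrm{E}(U,\widetilde{U})\;\geq\;2\sqrt{p}\,\arcsin\!\left(\tfrac{\|U-\widetilde{U}\|_\mathrm{F}}{2\sqrt{p}}\right).
\end{equation*}
By \cref{lem:frobenius_diameter}, the argument of $\arcsin$ lies in $[0,1]$, and on this interval one has the elementary inequality $\arcsin(x)\geq x$ (since $\arcsin$ is convex on $[0,1]$ and agrees with $x\mapsto x$ at $0$). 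Hence $d_\mathrm{E}(U,\widetilde{U})\geq \|U-\widetilde{U}\|_\mathrm{F}$, and multiplying by $\min\{1,\sqrt{\beta}\}$ yields the left-hand inequality.

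There is no real obstacle: the whole proof is a one-line invocation of \cref{cor:allbounds} followed by the two scalar inequalities $x\leq \arcsin(x)\leq \tfrac{\pi}{2}x$ on $[0,1]$ together with \cref{thm:Euclideanlowerbound} and \cref{thm:linearbound}. The only mild subtlety worth flagging is that the $n\geq 2p$ assumption is inherited from \cref{thm:linearbound}; the lower bound itself does not need it.
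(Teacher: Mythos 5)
Your proof is correct and takes essentially the same route as the paper: reduce to $d_\mathrm{E}$ via \cref{cor:allbounds}, then apply \cref{thm:linearbound} for the upper half and \cref{thm:Euclideanlowerbound} together with $\arcsin(x)\geq x$ for the lower half (the paper packages the first step through \cref{thm:generalizedbounds}, but that theorem is itself built from exactly these ingredients). One small correction to your justification of $\arcsin(x)\geq x$ on $[0,1]$: convexity plus agreement with $x\mapsto x$ \emph{at the point} $0$ is not enough (e.g.\ $x\mapsto x^2$ is convex and vanishes at $0$, yet lies below $x$); what you need is that $y=x$ is the \emph{tangent line} of $\arcsin$ at $0$ (since $\arcsin'(0)=1$), and a convex function lies above its tangent lines, which is how the paper phrases it.
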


\begin{proof}
    By~\cref{thm:generalizedbounds}, we have
    \begin{equation*}
    d_{\mathrm{E}}(\widetilde{U},U)\geq\min\{1,\sqrt{\beta}\} 2\sqrt{p}\arcsin\left(\frac{\|\widetilde{U}-U\|_{\mathrm{F}}}{2\sqrt{p}}\right)\geq\min\{1,\sqrt{\beta}\} \|\widetilde{U}-U\|_{\mathrm{F}}.
    \end{equation*}
     The last inequality holds since $[0,2\sqrt{p}]\ni \delta\mapsto \min\{1,\sqrt{\beta}\}2\sqrt{p}\arcsin\left(\frac{\delta}{2\sqrt{p}}\right)$ is a convex function and $\delta\mapsto \min\{1,\sqrt{\beta}\}\delta$ is the tangent at $\delta=0$.
\end{proof}

Notice that the value of $\beta$ such that the two Lipschitz constants of~\cref{cor:Frob_equi_geo} are the closest from each other is $\beta=1$, i.e., under the Euclidean metric. As one could have expected, the Euclidean geodesic distance is the most similar to the Frobenius distance.

\textbf{Growth of the $\beta$-distance for nearby matrices.} We complete our picture of the $\beta$-distance by characterizing its initial growth factor in terms of the Frobenius distance along any $\beta$-geodesic. In~\cref{fig:shootingfig},~\cref{fig:canonicalsamples} and \cref{fig:tominusU}, this translates by the minimum and maximum slopes of the dotted domain at the origin.
\begin{comment}
 \simcom{A titre personnel, j'aime bien l'explication qui suit, mais tu pourrais préférer la seconde.} For all $U\in\mathrm{St}(n,p)$ and for all $\Delta\in T_U\mathrm{St}(n,p)$ with $\Delta = UA +U_\perp B$, we have
\begin{align*}
\lim_{t\rightarrow 0} \frac{d_\mathrm{E}(U, \mathrm{Exp}_U(t\Delta))}{\|U-\mathrm{Exp}_U(t\Delta)\|_\mathrm{F}} &=\lim_{t\rightarrow 0} \frac{t\|\Delta\|_\mathrm{E}}{\left\|U-[U\ U_\perp](I_{n\times p}+t[-A\ B^T]^T+o(t))\right\|_\mathrm{F}}\\
&=\lim_{t\rightarrow 0}\frac{t\|\Delta\|_\mathrm{F}}{\|t[A\ B]\|_\mathrm{F}}\\
&=1  .
\end{align*}
From a geometric point of view, this conclusion is a direct consequence of the fact $(\mathrm{St}(n,p),\langle\cdot,\cdot\rangle_\mathrm{E})$ is locally isomorphic to the Euclidean space. %this conclusion is a direct consequence of the fact $\mathrm{St}(n,p)$ is differentiable and $\langle\cdot,\cdot\rangle_\mathrm{E}$ at $U$ is just $\langle \cdot,\cdot\rangle_\mathrm{F}$ restricted to pairs of elements of $T_U\mathrm{St}(n,p)$.
This result generalizes to all the $\beta$-distances by leveraging their bilipschitz equivalence (see~\cref{cor:allbounds}). It follows that
\begin{equation}\label{eq:slope_beta}
\min\{1,\sqrt{\beta}\}\leq\lim_{t\rightarrow 0} \frac{d_\beta(U, \mathrm{Exp}_U(t\Delta))}{\|U-\mathrm{Exp}_U(t\Delta)\|_\mathrm{F}}\leq \max\{1,\sqrt{\beta}\}.
\end{equation}
\end{comment}
For all $U\in\mathrm{St}(n,p)$, all $\beta>0$, let $[0,\epsilon_\beta]$ with $\epsilon_\beta>0$ be a time interval such that all unit-speed $\beta$-geodesics emanating from $U$ monotonically increase the Frobenius distance. The existence of $\epsilon_\beta>0$ is guaranteed since, otherwise,~\cref{cor:Frob_equi_geo} would imply the existence of a unit-speed geodesic loop of length zero. By \cite[Eq.~2.1]{absil2024ultimate}, the injectivity radius $\mathrm{inj}_{\mathrm{St}(n,p),\beta}$ would be zero. Since the injectivity radius is known to be positive \cite[III.4,~Prop.~4.13]{sakai1996riemannian}, $\epsilon_\beta>0$ exists. %Let $$\delta_\mathrm{F}:=\min_{\Delta\in T_U\mathrm{St}(n,p)}\{\|U-\mathrm{Exp}_{\beta,U}(\epsilon_\beta\Delta)\|_\mathrm{F} | \  \|\Delta\|_\beta=1\}.$$
For all $\Delta\in T_U\mathrm{St}(n,p)$ with $\|\Delta\|_\beta=1$, let $\delta_{\beta}(\Delta) :=\|U-\mathrm{Exp}_{\beta,U}(\epsilon_\beta\Delta)\|_\mathrm{F}$. Then, $[0,\epsilon_\beta]\ni t\mapsto\mathrm{Exp}_{\beta,U}(t\Delta)$ can be time-reparameterized by the monotonically increasing function $s(t):=\|U-\mathrm{Exp}_{\beta,U}(t\Delta)\|_\mathrm{F}$ such that $[0,\delta_\beta(\Delta)]\ni \delta\mapsto\mathrm{Exp}_{\beta,U}(s^{-1}(\delta)\Delta)=:\widetilde{U}_\Delta(\delta)$ satisfies $\delta=\|U-\widetilde{U}_\Delta(\delta)\|_\mathrm{F}$. Finally, for all $\Delta\in T_U\mathrm{St}(n,p)$ with $\|\Delta\|_\beta=1$ and $\delta\in[0,\delta_\beta(\Delta)]$, we have
\begin{align}
\nonumber
\widehat{m}_\beta(\delta)&\leq d_{\beta}(\widetilde{U}_\Delta(\delta),U)\leq \widehat{M}_\beta(\delta)\\
\nonumber
\Longrightarrow\lim_{\delta\rightarrow0} \frac{\widehat{m}_\beta(\delta)}{\delta}&\leq \lim_{\delta\rightarrow0} \frac{d_{\beta}(\widetilde{U}_\Delta(\delta),U)}{\delta} \leq \lim_{\delta\rightarrow0} \frac{\widehat{M}_\beta(\delta)}{\delta}\\
\label{eq:slope_beta}
  \Longrightarrow\min\{1,\sqrt{\beta}\}&\leq\lim_{\delta\rightarrow0} \frac{d_{\beta}(\widetilde{U}_\Delta(\delta),U)}{\delta}\leq \max\{1,\sqrt{\beta} \}.
\end{align}
%In view of Theorem~\cref{thm:tightness}, both the lower and the upper bound from~\eqref{eq:slope_beta} are attained. 
For all $U\in \mathrm{St}(n,p)$, if $\beta\leq 1$, the lower bound is attained for $\Delta=UA$ and the upper bound for $\Delta=U_\perp B$. If $\beta\geq 1$, then the lower bound is attained for $\Delta=U_\perp B$ and the upper bound for $\Delta = UA$. The bounds from~\eqref{eq:slope_beta} are numerically corroborated in~\cref{fig:range}.

\begin{figure}
\centering
\includegraphics[width = 7cm]{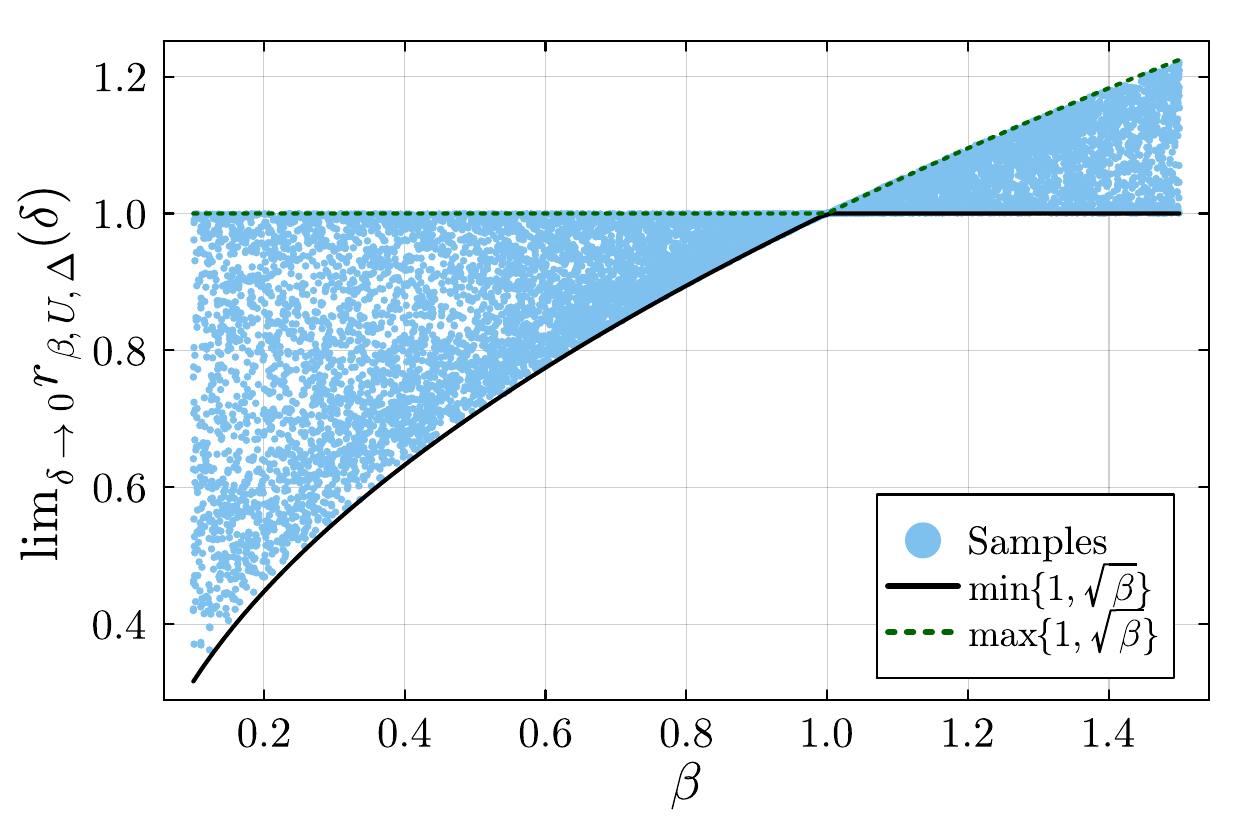}
\caption{Numerical experiment on $\mathrm{St}(4, 2)$ where we compute $\lim_{\delta\rightarrow 0}r_{\beta,U,\Delta}(\delta):=\lim_{\delta\rightarrow 0}\frac{d_\beta(U, U_{\Delta}(\delta)}{\delta}$ for 10000 randomly chosen $U\in\mathrm{St}(4, 2)$, $\Delta\in T_U\mathrm{St}(4, 2)$ and $\beta\in[0.1,1.5]$. The limit $\delta\rightarrow 0$ is evaluated at $\delta=10^{-6}$. This experiment corroborates \eqref{eq:slope_beta}: $\min\{1,\sqrt(\beta)\}\leq\lim_{t\rightarrow 0}r_{\beta,U,\Delta}(\delta)\leq \max\{1,\sqrt(\beta)\}$.}
\label{fig:range}
\end{figure}
\section{Conclusion} We have presented new bounds on the geodesic distances on the Stiefel manifold endowed with any member of the family of Riemannian metrics introduced by H\"uper et al.~\cite{Huper2021}. We started by quantifying the influence of the parameter $\beta$ on the geodesic distance in~\cref{thm:equivalence}. This result enlightens the choice of the metric and its influence on numerical results. New bounds involving the Frobenius distance help improving the performance of minimal geodesic computation algorithms. For any two points $U,\widetilde{U}\in\mathrm{St}(n,p)$, the output should satisfy \cref{thm:generalizedbounds}. Moreover, \cref{thm:generalizedbounds} quantifies the error made by the Frobenius distance as an approximation of the geodesic distance. For example, it follows that $\|U-\widetilde{U}\|_\mathrm{F}\leq 1$ yields $1\leq \frac{d_\mathrm{E}(U,\widetilde{U})}{\|U-\widetilde{U}\|_\mathrm{F}}\leq 1.05$. Due to \cref{lem:augmentedrows}, the constraint $n\geq 2p$ appears recurrently in this work. Examples, such as \cref{app:branches}, indicate that $n\geq 2p$ can not be relaxed when $\beta>1$. However, \cref{lem:augmentedrows} is only leveraged with $\beta=1$  for which \cref{conj:rows_beta}, if true, allows to drop this constraint throughout the article. Finally, we pointed out the relevance of the sets $\Sigma_{U,\widetilde{U}}$ and $\Pi_{U,\widetilde{U}}$ in the analysis of the Stiefel manifold since \cref{thm:reachlinearbound} revealed non-trivial open regions of the ambient space $\mathbb{R}^{n\times p}$ where no points from the Stiefel manifold exist. In particular, these open regions explain the maximum sectional curvature $1$ of the Stiefel manifold under the Euclidean metric~\cite[Thm.~10]{zimmermann2024high}.%The ultimate goal of this research would be to provide a closed form expression for the Riemannian logarithm on the Stiefel manifold. Before reaching this goal, other curves on the Stiefel manifold could be studied. Curves such as quasi-geodesics~\cite{Bendokat_2021} could provide bounds and reveal other geometric insights on the Stiefel manifold. %Besides, methods coming from statistics on manifolds could be leveraged to provide precise estimations of the geodesic distance given some input data.
\appendix
\section{Riemannian exponential computation}\label{app:Euclideanexponential}
\begin{lem}
For all $U\in\mathrm{St}(n,3)$, $u_\perp\in\mathrm{St}(n,1)$ with $u_\perp^TU=0$, if $\Delta=UA+u_\perp B$ with 
\begin{equation*}
\m{A} := \frac{\pi\sqrt{3}}{3}\begin{bmatrix}
            0&1&-1\\
            -1&0&1\\
            1&-1&0
        \end{bmatrix}\text{ and } \m{B}:=\frac{\pi\sqrt{3}}{3}\begin{bmatrix}
            1&1&1
        \end{bmatrix},
    \end{equation*}
    then $\mathrm{Exp}_{\mathrm{E},U}(\Delta) = -U$.
    \end{lem}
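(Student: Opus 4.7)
The plan is to apply \cref{thm:geodesics} with $\beta=1$, taking $Q=u_\perp\in\mathrm{St}(n,1)$ (so $B\in\mathbb{R}^{1\times 3}$ is the $1\times p$ block of \eqref{eq:paramgeogeneral}), which yields
\begin{equation*}
\mathrm{Exp}_{\mathrm{E},U}(\Delta)=[U\ u_\perp]\exp(M)I_{4\times 3}\exp(-A),\qquad M:=\begin{bmatrix}2A&-B^T\\ B&0\end{bmatrix}\in\mathbb{R}^{4\times4}.
\end{equation*}
The goal reduces to proving that $\exp(M)I_{4\times 3}\exp(-A)=\bigl[\begin{smallmatrix}-I_3\\ 0\end{smallmatrix}\bigr]$, since then $[U\ u_\perp]\bigl[\begin{smallmatrix}-I_3\\ 0\end{smallmatrix}\bigr]=-U$. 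Write $c=\pi/\sqrt{3}$, so $A=cJ$ and $B=c\,\mathbf{1}^T$ with $\mathbf{1}=(1,1,1)^T$ and $J$ the $3\times 3$ skew matrix appearing in the statement.

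The key structural observation is that $J\mathbf{1}=0$ (each row of $J$ sums to $0$), so the whole computation is organized by the direction $\mathbf{1}\in\mathbb{R}^3$. A direct calculation gives $J^2=\mathbf{1}\mathbf{1}^T-3I_3$, hence the eigenvalues of $J$ are $0,\pm i\sqrt{3}$, and those of $A=cJ$ are $0,\pm i\pi$. Because $A^3=-\pi^2 A$, the standard Rodrigues-type expansion gives
\begin{equation*}
\exp(tA)=I_3+\tfrac{\sin(\pi t)}{\pi}A+\tfrac{1-\cos(\pi t)}{\pi^2}A^2,
\end{equation*}
so in particular $\exp(A)=I_3+\tfrac{2}{\pi^2}A^2=\tfrac{2}{3}\mathbf{1}\mathbf{1}^T-I_3$ (a rotation by $\pi$ about the axis $\mathbf{1}$) and $\exp(2A)=I_3$.

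The main step is to diagonalize the action of $M$ by decomposing $\mathbb{R}^4=V_1\oplus V_2$ with
\begin{equation*}
V_1:=\mathrm{span}\{(\mathbf{1}/\sqrt{3},0),(0,1)\},\qquad V_2:=\{(v,0):v\in\mathbb{R}^3,\ \mathbf{1}^Tv=0\}.
\end{equation*}
Using $J\mathbf{1}=0$, $\mathbf{1}^TJ=0$, $\mathbf{1}^T\mathbf{1}=3$, one verifies that $M$ preserves both $V_1$ and $V_2$. On $V_1$, $M$ has matrix $\bigl[\begin{smallmatrix}0&-\pi\\ \pi&0\end{smallmatrix}\bigr]$ in the orthonormal basis above, so $\exp(M)|_{V_1}=\bigl[\begin{smallmatrix}\cos\pi&-\sin\pi\\ \sin\pi&\cos\pi\end{smallmatrix}\bigr]=-I_2$. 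On $V_2$, $M$ coincides with the action of $2A$ on $\mathbb{R}^3$ (the $B$-blocks vanish there because $\mathbf{1}^Tv=0$), so $\exp(M)|_{V_2}=\exp(2A)|_{V_2}=I_2$. The main obstacle is just organizing this decomposition cleanly; once done, the exponential is under full control.

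To finish, for each $j\in\{1,2,3\}$ split $e_j^{(3)}=\tfrac{1}{3}\mathbf{1}+(e_j^{(3)}-\tfrac{1}{3}\mathbf{1})$, with the first summand in $V_1$ and the second in $V_2$. Applying $\exp(M)$ multiplies the $V_1$-part by $-1$ and leaves the $V_2$-part fixed, so
\begin{equation*}
\exp(M)\begin{bmatrix}e_j^{(3)}\\ 0\end{bmatrix}=\begin{bmatrix}e_j^{(3)}-\tfrac{2}{3}\mathbf{1}\\ 0\end{bmatrix}.
\end{equation*}
Stacking these columns gives $\exp(M)I_{4\times 3}=\bigl[\begin{smallmatrix}I_3-\tfrac{2}{3}\mathbf{1}\mathbf{1}^T\\ 0\end{smallmatrix}\bigr]=\bigl[\begin{smallmatrix}-\exp(A)\\ 0\end{smallmatrix}\bigr]$, and therefore $\exp(M)I_{4\times 3}\exp(-A)=\bigl[\begin{smallmatrix}-I_3\\ 0\end{smallmatrix}\bigr]$, yielding $\mathrm{Exp}_{\mathrm{E},U}(\Delta)=-U$ as required.
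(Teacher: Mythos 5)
Your proof is correct, and it takes a genuinely different---and cleaner---route than the paper's. The paper carries out explicit eigenvalue decompositions of $A$ and of $M$ (eigenvalues $0,\pm i\pi$ and $\pm i\pi,\pm 2i\pi$ respectively, with tabulated unit eigenvectors), multiplies the factors to recover $\exp(-A)$ and $\exp(M)$ entrywise, and then checks by matrix multiplication that $\exp(M)I_{4\times 3}\exp(-A)=\bigl[\begin{smallmatrix}-I_3\\ 0\end{smallmatrix}\bigr]$; this is correct but opaque. You instead distill everything from the single structural fact $J\mathbf{1}=0$: it gives $A^3=-\pi^2A$ and hence the Rodrigues expansion (so $\exp(A)=\tfrac{2}{3}\mathbf{1}\mathbf{1}^T-I_3$ and $\exp(2A)=I_3$), and it produces an invariant $2\oplus2$ splitting $\mathbb{R}^4=V_1\oplus V_2$ on which $M$ acts as a $\pi$-rotation in the $(\mathbf{1},e_4)$-plane and as $2A$ on $\mathbf{1}^\perp$, so $\exp(M)=(-I_2)\oplus I_2$ in block form. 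The identity $\exp(M)I_{4\times 3}=\bigl[\begin{smallmatrix}-\exp(A)\\ 0\end{smallmatrix}\bigr]$ then falls out immediately and the trailing $\exp(-A)$ factor cancels by design, explaining why the geodesic lands exactly on $-U$ rather than merely verifying it. Your argument also avoids the paper's eigenvector bookkeeping entirely; the two proofs of course agree, e.g.\ the paper's $\exp(-A)=\tfrac{1}{3}\left[\begin{smallmatrix}-1&2&2\\ 2&-1&2\\ 2&2&-1\end{smallmatrix}\right]$ is exactly your $\tfrac{2}{3}\mathbf{1}\mathbf{1}^T-I_3$.
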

    \begin{proof}
    By \eqref{eq:paramgeogeneral}, we have
    \begin{equation}\label{eq:exp1}
    \mathrm{Exp}_{\mathrm{E},U}(\Delta) = [U\ u_\perp]\exp\left(\begin{bmatrix}2A&-B^T\\
    B&0\end{bmatrix}\right)I_{4\times 3}\exp(-A).
    \end{equation}
    The proof simply consists of evaluating analytically~\eqref{eq:exp1} using eigenvalue decompositions (EVD).
    We first compute an EVD of $A$. $\det(A-\lambda I_3)=-\lambda(\lambda^2+\pi^2)$, thus the eigenvalues are $0$, $i\pi$ and $-i\pi$. The unit-norm eigenvectors are given by
    \begin{equation*}
    	     v_0=\frac{\sqrt{3}}{3}\begin{bmatrix}1\\1\\1\end{bmatrix},\ v_{i\pi} = \frac{\sqrt{3}}{3}\begin{bmatrix}1\\-2+i\frac{\sqrt{3}}{2}\\-2-i\frac{\sqrt{3}}{2}\end{bmatrix},\ v_{-i\pi} = \frac{\sqrt{3}}{3}\begin{bmatrix}1\\-2-i\frac{\sqrt{3}}{2}\\-2+i\frac{\sqrt{3}}{2}\end{bmatrix}.
    	     \end{equation*}
    Letting $V_A$ the matrix of eigenvectors and $\Lambda_A$ the matrix of eigenvalues, we can compute
    \begin{equation}\label{eq:exp2}
    \exp(-A)=V_A\exp(-\Lambda_A)\overline{V_A}^T = \frac{1}{3}\begin{bmatrix}
    -1&2&2\\
    2&-1&2\\
    2&2&-1\\
    \end{bmatrix}.
    \end{equation} 
	Then defining $M=\begin{bmatrix}2A&-B^T\\
    B&0\end{bmatrix}$, we can compute $\det(M-\lambda I) = (\lambda^2+\pi^2)(\lambda^2+(2\pi)^2)$. The eigenvalues are $-i\pi, i\pi, -2i\pi$ and  $i2\pi$. The unit-norm eigenvectors are given by
    \begin{equation*}
       v_{-i\pi}=\begin{bmatrix}
       -i\frac{\sqrt{6}}{6}\\
       -i\frac{\sqrt{6}}{6}\\
       -i\frac{\sqrt{6}}{6}\\
       \frac{\sqrt{2}}{2}\\
       \end{bmatrix},\ v_{i\pi}=\begin{bmatrix}
       i\frac{\sqrt{6}}{6}\\
       i\frac{\sqrt{6}}{6}\\
       i\frac{\sqrt{6}}{6}\\
       \frac{\sqrt{2}}{2}
       \end{bmatrix},\ v_{-i2\pi}=\begin{bmatrix}
       -\frac{\sqrt{3}}{3}\\
       \frac{\sqrt{3}}{6}+i\frac{1}{2}\\
       \frac{\sqrt{3}}{6}-i\frac{1}{2}\\
       0
       \end{bmatrix},\ v_{i2\pi}=\begin{bmatrix}
       -\frac{\sqrt{3}}{3}\\
       \frac{\sqrt{3}}{6}-i\frac{1}{2}\\
       \frac{\sqrt{3}}{6}+i\frac{1}{2}\\
       0
       \end{bmatrix}.
    \end{equation*}
    Letting $V_M$ the matrix of eigenvectors and $\Lambda_M$ the matrix of eigenvalues, we can compute
    \begin{equation}\label{eq:exp3}
    \exp(M) = V_M \exp(\Lambda_M)\overline{V}_M^T= \frac{1}{3}
    \begin{bmatrix}
    1&-2&-2&0\\
    -2&1&-2&0\\
    -2&-2&1&0\\
    0&0&0&-3\\
    \end{bmatrix}.
    \end{equation}
    Combining \eqref{eq:exp1},~\eqref{eq:exp2} and \eqref{eq:exp3}, it is direct to conclude that $\mathrm{Exp}_{\mathrm{E},U}(\Delta)=-U$.
    \end{proof}
    \section{The two branches of~\cref{fig:canonicalsamples}}\label{app:branches}
    The two ``branches'' on the right plot of~\cref{fig:canonicalsamples} are a certificate of failure of \cite[Alg.~2]{ZimmermannRalf22} to find the minimal geodesic in $\mathrm{St}(3,2)$ between $U$ and $\widetilde{U}$ with $\widetilde{U}$ close to $-U$ for $\beta=2$. Indeed, $\widetilde{U}=-U$ is the unique point achieving $\|U-\widetilde{U}\|_\mathrm{F}=2\sqrt{p}$. Hence, the pair $U,-U$ realizes the tip of one of the two branches. From this pair, infinitesimal changes of the Frobenius distance would bring to the other branch and yield discontinuous changes of the geodesic distance. This contradicts the bilipschitz equivalence of the geodesic and the Frobenius distances. The upper branch is drawn, notably, by the geodesic curve $\gamma_1(t) =UG_{p}(t\pi)$ with $G_p$ defined as in \eqref{eq:G_theta}. For $t\in[0,1]$, this curve has length $l_\beta(\gamma_1) = \pi\sqrt{2\beta}$ exactly. However, one can find another curve of shorter length if $\beta>1$. Consider the curve
    \begin{equation}
    \gamma_2(t) = [U\ U_\perp]\exp\left(t\begin{bmatrix}
    2\beta A&-B^T\\
    B&0
    \end{bmatrix}\right)I_{3\times 2}\exp((1-2\beta)At),
    \end{equation}
    with $A = \begin{bmatrix}0&\frac{\pi}{1-2\beta}\\
    -\frac{\pi}{1-2\beta}&0
    \end{bmatrix}$ and $B\in\mathbb{R}^{1\times 2}$ such that $\|B\|_\mathrm{F} = 2\pi \sqrt{1-\frac{\beta^2}{(1-2\beta)^2}}$. It follows that 
$\left\|\left[\begin{smallmatrix}
    2\beta A&-B^T\\
    B&0
    \end{smallmatrix}\right]\right\|_\mathrm{F} = 2\pi\sqrt{2}$. Hence, the eigenvalues are $0,2i\pi,-2i\pi$ and $\exp\left[\begin{smallmatrix}
    2\beta A&-B^T\\
    B&0
    \end{smallmatrix}\right] =I_3$.
   Therefore, $\gamma_2(1) = [U\ U_\perp]I_3 I_{3\times 2}(-I_2) = -U$. Moreover, $\gamma_2$ has shorter length that $\gamma_1$:
   $$l_\beta(\gamma_2) =\sqrt{\beta \|A\|_\mathrm{F}^2+\|B\|_\mathrm{F}^2} = \pi\sqrt{\frac{2\beta}{(1-2\beta)^2}+4\left(1-\frac{\beta^2}{(1-2\beta)^2}\right)} < \pi\sqrt{2\beta}\text{ if } \beta>1.$$
  These two curves $\gamma_1$ and $\gamma_2$ from $U$ to $-U$, and associated families of neighboring curves joining $U$ to neighbors of $-U$ are the reason for the existence of the two branches on the right plot of~\cref{fig:canonicalsamples}. The upper branch is the result of convergence of \cite[Alg.~2]{ZimmermannRalf22} to the non-minimal family of geodesics.
  
Finally, this discussion indicates that \cref{lem:augmentedrows} does not hold for $n<2p$ when $\beta>1$. Indeed, let $U\in\mathrm{St}(2,2)=\mathrm{O}(2)$. The shortest geodesics on $\mathrm{O}(2)$ are known \cite[Eq.~2.14]{EdelmanArias98} and $\gamma_1$ is the shortest geodesic from $U$ to $-U$. Now, append a row of zeros to $U$ so that the new logarithm problem is defined on $\mathrm{St}(3,2)$. We have seen above that the newly available path $\gamma_2$ between $U$ and $-U$ is strictly shorter that $\gamma_1$. Hence, appending a row of zeros decreased $d_\beta(U,-U)$.

\bibliographystyle{splncs04}

\begin{thebibliography}{10}
\providecommand{\url}[1]{\texttt{#1}}
\providecommand{\urlprefix}{URL }
\providecommand{\doi}[1]{https://doi.org/#1}

\bibitem{AbsMahSep2008}
Absil, {\relax P.-A}., Mahony, R., Sepulchre, R.: Optimization Algorithms on
  Matrix Manifolds. Princeton University Press, Princeton, NJ (2008),
  \url{http://press.princeton.edu/titles/8586.html}

\bibitem{absil2024ultimate}
Absil, {\relax P.-A}., Mataigne, S.: The ultimate upper bound on the
  injectivity radius of the {S}tiefel manifold (2024),
  \url{https://arxiv.org/abs/2403.02079}

\bibitem{AfsariTronVidal2013}
Afsari, B., Tron, R., Vidal, R.: On the convergence of gradient descent for
  finding the {Riemannian} center of mass. {SIAM} Journal on Control and
  Optimization  \textbf{51}(3),  2230--2260 (2013). \doi{10.1137/12086282X}

\bibitem{Bergmann:2022}
Bergmann, R.: Manopt.jl: Optimization on manifolds in {J}ulia. Journal of Open
  Source Software  \textbf{7}(70), ~3866 (2022). \doi{10.21105/joss.03866}

\bibitem{manopt}
Boumal, N., Mishra, B., Absil, {\relax P.-A}., Sepulchre, R.: {M}anopt, a
  {M}atlab toolbox for optimization on manifolds. Journal of Machine Learning
  Research  \textbf{15}(42),  1455--1459 (2014),
  \url{http://jmlr.org/papers/v15/boumal14a.html}

\bibitem{BrynerDarshan17}
Bryner, D.: {E}ndpoint {G}eodesics on the {S}tiefel {M}anifold {E}mbedded in
  {E}uclidean space. SIAM Journal on Matrix Analysis and Applications
  \textbf{38}(4),  1139--1159 (2017). \doi{10.1137/16M1103099}

\bibitem{ChakrabortyVemuri2019}
Chakraborty, R., Vemuri, B.C.: {Statistics on the Stiefel manifold: Theory and
  applications}. The Annals of Statistics  \textbf{47}(1),  415 -- 438 (2019).
  \doi{10.1214/18-AOS1692}

\bibitem{Choromanski}
Choromanski, K., Cheikhi, D., Davis, J., Likhosherstov, V., Nazaret, A.,
  Bahamou, A., Song, X., Akarte, M., Parker-Holder, J., Bergquist, J., Gao, Y.,
  Pacchiano, A., Sarlos, T., Weller, A., Sindhwani, V.: Stochastic flows and
  geometric optimization on the orthogonal group. In: Proceedings of the 37th
  International Conference on Machine Learning. ICML'20, JMLR.org (2020),
  \url{https://api.semanticscholar.org/CorpusID:214714459}

\bibitem{EdelmanArias98}
Edelman, A., Arias, T.A., Smith, S.T.: The geometry of algorithms with
  orthogonality constraints. SIAM Journal on Matrix Analysis and Applications
  \textbf{20}(2),  303--353 (1998). \doi{10.1137/S0895479895290954}

\bibitem{frechet1948}
Fr\'echet, M.R.: Les \'el\'ements al\'eatoires de nature quelconque dans un
  espace distanci\'e. Annales de l'institut Henri Poincar\'e  \textbf{10}(4),
  215--310 (1948), \url{http://eudml.org/doc/79021}

\bibitem{GaoVaryAblinAbsil2022}
Gao, B., Vary, S., Ablin, P., Absil, {\relax P.-A}.: Optimization flows landing
  on the {S}tiefel manifold. IFAC-PapersOnLine  \textbf{55}(30),  25--30
  (2022),
  \url{https://www.sciencedirect.com/science/article/pii/S2405896322026519},
  25th IFAC Symposium on Mathematical Theory of Networks and Systems MTNS 2022

\bibitem{GoluVanl96}
Golub, G.H., Van~Loan, C.F.: Matrix Computations. The Johns Hopkins University
  Press, third edn. (1996)

\bibitem{Huang_Liu_Lang_Yu_Wang_Li_2018}
Huang, L., Liu, X., Lang, B., Yu, A., Wang, Y., Li, B.: Orthogonal weight
  normalization: Solution to {O}ptimization over multiple dependent {S}tiefel
  {M}anifolds in {D}eep {N}eural {N}etworks. Proceedings of the AAAI Conference
  on Artificial Intelligence  \textbf{32}(1) (Apr 2018).
  \doi{10.1609/aaai.v32i1.11768}

\bibitem{Huper2021}
Hüper, K., Markina, I., Silva, L.F.: A {L}agrangian approach to extremal
  curves on {S}tiefel manifolds. Journal of Geometric Mechanics
  \textbf{13}(1),  55--72 (2021). \doi{10.3934/jgm.2020031}

\bibitem{mataigne2023bounds}
Mataigne, S.: Bounds on the geodesic distances on the {S}tiefel manifold.
  Master's thesis, UCLouvain, Louvain-la-Neuve, Belgium (2023),
  \url{https://dial.uclouvain.be/memoire/ucl/fr/object/thesis%3A40523}

\bibitem{mataigne2024efficient}
Mataigne, S., Zimmermann, R., Miolane, N.: An efficient algorithm for the
  {R}iemannian logarithm on the {S}tiefel manifold for a family of {R}iemannian
  metrics (2024), \url{https://arxiv.org/abs/2403.11730}

\bibitem{JMLR:v21:19-027}
Miolane, N., Guigui, N., Brigant, A.L., Mathe, J., Hou, B., Thanwerdas, Y.,
  Heyder, S., Peltre, O., Koep, N., Zaatiti, H., Hajri, H., Cabanes, Y.,
  Gerald, T., Chauchat, P., Shewmake, C., Brooks, D., Kainz, B., Donnat, C.,
  Holmes, S., Pennec, X.: {G}eomstats: A {P}ython {P}ackage for {R}iemannian
  {G}eometry in {M}achine {L}earning. Journal of Machine Learning Research
  \textbf{21}(223), ~1--9 (2020), \url{http://jmlr.org/papers/v21/19-027.html}

\bibitem{Nguyengeodesics}
Nguyen, D.: Closed-form geodesics and optimization for {R}iemannian logarithms
  of {S}tiefel and flag manifolds. Journal of Optimization Theory and
  Applications  \textbf{194}(1),  142--166 (2022).
  \doi{10.1007/s10957-022-02012-3}

\bibitem{RentmeestersQ}
Rentmeesters, Q.: {A}lgorithms for data fitting on some common homogeneous
  spaces. Ph.D. thesis, Catholic University of Louvain (UCLouvain) (2013),
  \url{https://dial.uclouvain.be/pr/boreal/fr/object/boreal:132587}

\bibitem{sakai1996riemannian}
Sakai, T.: Riemannian Geometry. Fields Institute Communications, American
  Mathematical Society (1996),
  \url{https://books.google.be/books?id=ODDyngEACAAJ}

\bibitem{stoye2024injectivity}
Stoye, J., Zimmermann, R.: On the injectivity radius of the {S}tiefel manifold:
  Numerical investigations and an explicit construction of a cut point at short
  distance (2024), \url{https://arxiv.org/abs/2403.03782}

\bibitem{sutti2023shooting}
Sutti, M.: Shooting methods for computing geodesics on the {S}tiefel manifold
  (2023), \url{https://arxiv.org/abs/2309.03585}

\bibitem{Turaga08}
Turaga, P., Veeraraghavan, A., Chellappa, R.: Statistical analysis on {S}tiefel
  and {G}rassmann manifolds with applications in computer vision. In: 2008 IEEE
  Conference on Computer Vision and Pattern Recognition. pp.~1--8 (2008).
  \doi{10.1109/CVPR.2008.4587733}

\bibitem{Zimmermann17}
Zimmermann, R.: A {M}atrix-{A}lgebraic {A}lgorithm for the {R}iemannian
  {L}ogarithm on the {S}tiefel {M}anifold under the {C}anonical {M}etric. SIAM
  Journal on Matrix Analysis and Applications  \textbf{38}(2),  322--342
  (2017). \doi{10.1137/16M1074485}

\bibitem{ZimmermannRalf22}
Zimmermann, R., H\"{u}per, K.: Computing the {R}iemannian {L}ogarithm on the
  {S}tiefel {M}anifold: {M}etrics, {M}ethods, and {P}erformance. SIAM Journal
  on Matrix Analysis and Applications  \textbf{43}(2),  953--980 (2022).
  \doi{10.1137/21M1425426}

\bibitem{zimmermann2024high}
Zimmermann, R., Stoye, J.: High curvature means low-rank: On the sectional
  curvature of {G}rassmann and {S}tiefel manifolds and the underlying matrix
  trace inequalities (2024), \url{https://arxiv.org/abs/2403.01879}

\bibitem{zimmermann2024injectivity}
Zimmermann, R., Stoye, J.: The injectivity radius of the compact {S}tiefel
  manifold under the {E}uclidean metric (2024),
  \url{https://arxiv.org/abs/2405.02268}

\end{thebibliography}

\end{document}